\documentclass[11pt,twoside]{amsart}

\usepackage[margin=1.4in]{geometry}
\usepackage{fancyhdr}

\usepackage{amsmath, amssymb, amsthm, amscd}
\usepackage{mathtools}
\usepackage{mathrsfs} 
\usepackage{enumitem}
\usepackage{comment}
\usepackage{tikz}
\usepackage{tikz-cd}

\usepackage[backref=page]{hyperref}
\usepackage[dvipsnames]{xcolor}
\hypersetup{colorlinks=true,citecolor=NavyBlue,linkcolor=BrickRed,urlcolor=Orange}
\usepackage[nameinlink,capitalize]{cleveref}

\theoremstyle{plain}
\newtheorem{theorem}{Theorem}[section]
\newtheorem{claim}[theorem]{Claim}
\newtheorem{proposition}[theorem]{Proposition}
\newtheorem{lemma}[theorem]{Lemma}
\newtheorem{corollary}[theorem]{Corollary}

\theoremstyle{definition}
\newtheorem{definition}[theorem]{Definition}
\newtheorem{example}[theorem]{Example}

\theoremstyle{remark}

\newtheorem*{remark}{Remark}

\newcommand{\theoremref}[1]{\hyperref[#1]{Theorem~\ref*{#1}}}
\newcommand{\lemmaref}[1]{\hyperref[#1]{Lemma~\ref*{#1}}}
\newcommand{\definitionref}[1]{\hyperref[#1]{Definition~\ref*{#1}}}
\newcommand{\propositionref}[1]{\hyperref[#1]{Proposition~\ref*{#1}}}
\newcommand{\conjectureref}[1]{\hyperref[#1]{Conjecture~\ref*{#1}}}
\newcommand{\corollaryref}[1]{\hyperref[#1]{Corollary~\ref*{#1}}}
\newcommand{\exampleref}[1]{\hyperref[#1]{Example~\ref*{#1}}}

\newcommand{\C}{\mathbb{C}}
\newcommand{\Z}{\mathbb{Z}}
\newcommand{\Q}{\mathbb{Q}}

\newcommand{\D}{\mathscr{D}}
\newcommand{\A}{\mathcal{A}}

\newcommand{\M}{\mathcal{M}}

\newcommand{\strucsheaf}{\mathcal{O}}
\newcommand{\proj}{\mathbb{P}}

\newcommand{\alb}{\text{Alb}}
\newcommand{\fmd}{\operatorname{FM}^{\operatorname{D}}}
\newcommand{\fm}{\operatorname{FM}}
\newcommand{\tfm}{\widetilde{\operatorname{FM}}}
\newcommand{\MHM}{\operatorname{MHM}}

\DeclareMathOperator{\sheafhom}{\mathcal{H\kern -3pt o\kern -2pt m\kern -1pt}}

\newcommand{\IM}{\operatorname{Im}}

\newcommand{\supp}{\operatorname{Supp}}
\newcommand{\codim}{\operatorname{codim}}
\newcommand{\defeq}{\mathrel{\mathop:}=}

\newcommand{\id}{\operatorname{id}}

\newcommand{\gr}{\operatorname{gr}}

\title[Generic vanishing of Dolbeault cohomology]{Generic vanishing of Dolbeault cohomology of mixed Hodge modules on abelian varieties}

\author[Ze Yun]{Ze Yun}
\address{Department of Mathematics, Stony Brook University}
\email{ze.yun@stonybrook.edu}

\date{\today}

\begin{document}

\begin{abstract}
    We study generic vanishing for the Dolbeault cohomology of mixed Hodge modules, where the differentials in the Dolbeault complex are modified by a one-form. We prove that, on the Dolbeault moduli space of Higgs line bundles, the cohomology support loci of mixed Hodge modules agree with the images of the cohomology support loci of their underlying $\mathscr{D}$-modules on the de Rham moduli space under the map induced by the nonabelian Hodge correspondence. We further apply this result to study Fourier-Mukai transforms of mixed Hodge modules on abelian varieties. We show that the extended Fourier-Mukai transform for bounded complexes of mixed Hodge modules has cohomology sheaves that are flat over the twistor line of $\lambda$-connections. This also gives an extension of Fourier transform of mixed Hodge modules to a compactification of the de Rham moduli space.
\end{abstract}

\maketitle

\section{Introduction}

\subsection{Agenda}
Generic vanishing for cohomology of topologically trivial line bundles on compact Kähler manifolds was first studied by Green-Lazarsfeld \cite{GreenLazarsfeldI} \cite{GreenLazarsfeldII}. For a compact Kähler manifold $X$, they showed the following generic vanishing theorem:
\begin{theorem}[Generic vanishing theorem \cite{GreenLazarsfeldI}]
    Let $X$ be a compact Kähler manifold and $a:X\rightarrow A$ be the Albanese map of $X$. the cohomology of a generic line bundle $L\in \text{Pic}^0 (X)$ vanishes in degrees less than $\dim a(X)$.
\end{theorem}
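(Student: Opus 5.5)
The approach is the classical Green–Lazarsfeld deformation argument, run through Hodge theory. Write $n=\dim X$ and $k=\dim a(X)$. By Hodge symmetry it is equivalent to show that $H^i(X,\omega_X\otimes L)=0$ for generic $L\in\operatorname{Pic}^0(X)$ and $i>n-k$. This is Serre dual to $H^{n-i}(X,L^{-1})$, and $L\mapsto L^{-1}$ preserves genericity. So I work with the cohomology support loci
\[
V^i=\{L\in \operatorname{Pic}^0(X): H^i(X,L)\neq 0\}.
\]
These are closed analytic subsets by semicontinuity. It suffices to show that $\codim V^i\ge k-i$ for each $i$, because then $V^i$ is a proper subset whenever $i<k$.

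\textbf{First-order deformations.} Fix $L$ and a tangent direction $v\in H^1(X,\strucsheaf_X)=T_L\operatorname{Pic}^0(X)$. The derivative complex of Green–Lazarsfeld is
\[
H^{i-1}(X,L)\xrightarrow{\cup v}H^i(X,L)\xrightarrow{\cup v}H^{i+1}(X,L).
\]
If this complex is exact at the middle term, then no first-order deformation of a class in $H^i$ survives in direction $v$, so $v\notin$ the tangent cone of $V^i$ at $L$. This step uses the Hodge-theoretic input: write $v=\bar\omega$ with $\omega$ a holomorphic $1$-form. Harmonic theory for the flat unitary bundle $L$ identifies $H^i(X,L)$ with $\bar\partial$-harmonic $(0,i)$-forms, and these are closed under wedging with $\bar\omega$. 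Conjugating and using Hodge symmetry turns cup product with $v$ into wedge with $\omega$ on $H^0(X,\Omega^i\otimes L^{-1})$.

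\textbf{Exactness away from the zeros of $\omega$.} Wedge with $\omega$ is exact as a complex of sheaves $\Omega^{\bullet}_X$ off the zero locus $Z(\omega)$. I then show that cohomological exactness holds in degrees $i<\codim Z(\omega)$, by a spectral-sequence or Koszul-complex argument combined with the degeneration coming from harmonic theory. For generic $\omega$ pulled back from the Albanese, $\codim Z(\omega)\ge k$, since the differential of $a$ has rank $k$ at generic points. A dimension count over the space of $\omega$ then gives: the set of $\omega$ with $\codim Z(\omega)<k-j$ has codimension at least $j$ in $H^0(\Omega^1_X)$. Consequently the tangent cone of $V^i$ at every point has codimension at least $k-i$, and so does $V^i$.

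\textbf{Main obstacle.} The hard step is passing from the first-order statement about tangent cones to the actual codimension of $V^i$. This requires the derivative complex to control $V^i$ to all orders, not just at first order. I would first try to invoke Green–Lazarsfeld's holomorphic-Kähler deformation theory, namely that the relevant deformation complex is formal for unitary flat bundles (by Deligne–Griffiths–Morgan–Sullivan type $\partial\bar\partial$-arguments). Failing that, the fallback is Simpson's result that the $V^i$ are translates of subtori, which makes the first-order computation at a generic point of each component sufficient.
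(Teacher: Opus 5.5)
This theorem is only quoted in the paper (from Green--Lazarsfeld), so there is no proof there to compare with. Measured against the derivative-complex argument you are reconstructing, your sketch has a genuine gap, and it is not the one you flag.

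\textbf{The gap: the dimension count.} You claim that a general $\omega$ has $\codim Z(\omega)\ge k$ ``since the differential of $a$ has rank $k$ at generic points''. That only controls $Z(\omega)$ on the open set where $\operatorname{rank} da=k$. On the locus $X_r$ where the rank drops to $r<k$, the incidence count only gives $\dim(Z(\omega)\cap X_r)\le \dim X_r-r$, and this can exceed $n-k$. For instance, let $X=\operatorname{Bl}_pA$ with $A$ an abelian threefold, so $k=n=3$. Along the exceptional divisor $E=\mathbb{P}(T_pA)$ the differential $d\pi$ has rank one. Hence every nonzero $\omega=\pi^*\eta$ vanishes exactly along the line $\mathbb{P}(\ker\eta_p)\subset E$, and $\codim Z(\omega)=2$ for every $\omega\neq 0$. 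Your Koszul criterion itself is fine, granting the $E_2$-degeneration of $H^q(X,\Omega^p_X\otimes L^{-1})\Rightarrow\mathbb{H}^{p+q}(\Omega^\bullet_X\otimes L^{-1},\omega\wedge)$, which follows from the $\partial\bar\partial$-lemma. But here it yields exactness of the derivative complex only for $i\le 1$. So the argument says nothing about $V^2$, although $V^2(X)=\{\strucsheaf_X\}$.

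The count as you stated it (codimension $\ge j$) is also off by one. Non-exactness at $H^i$ gives $\codim Z(\omega)\le i$, i.e.\ $j=k-i-1$, so you would only get $\codim TC_LV^i\ge k-i-1$. The version you need is codimension $\ge j+1$; its case $j=0$ is the genericity claim above, and that is exactly what the example rules out.

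The conceptual point is that $V^i$ and the complexes $(H^0(X,\Omega^\bullet_X\otimes L^{-1}),\omega\wedge)$ are birational invariants, while $Z(\omega)$ is not: your argument works on $A$ and breaks on its blow-up. To close the gap you need one of two things:
\begin{enumerate}
\item an exactness statement for these complexes, for general $\omega$ and $i<k$, that does not see the degeneracy locus of $da$; or
\item a different route, e.g.\ Hacon's for projective $X$. Koll\'ar's splitting $\mathbf{R}a_*\omega_X\simeq\bigoplus_{j\le n-k}R^ja_*\omega_X[-j]$, together with the GV property $\codim V^p(R^ja_*\omega_X)\ge p$, gives $\codim V^i(\omega_X)\ge i-(n-k)$. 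This is the statement after Serre duality.
\end{enumerate}

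\textbf{The step you flag is harmless.} Represent the cohomology of the Poincar\'e family near $L$ by a minimal complex of free modules; its linear part in direction $v$ is the derivative complex $D_v$. Lower semicontinuity of ranks along points of $V^i$ approaching $L$ in direction $v$ then shows that exactness of $D_v$ at $H^i$ forces $v\notin TC_LV^i$. No higher-order information enters. Since an analytic germ has the same dimension as its tangent cone, codimension bounds on $TC_LV^i$ transfer directly to $V^i$. Neither formality nor Simpson's theorem is needed at that point.
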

Even though the structure sheaf of $X$ in general doesn't have vanishing cohomology, a generic Chern class zero line bundle does satisfy vanishing in certain degrees. Moreover, they studied the geometry of the following subsets of $\operatorname{Pic}^0(X):$
$$S^i(X)\defeq \{L\in\operatorname{Pic}^0(X): H^i(X,L)\neq 0 \}.$$

\begin{theorem}[\cite{GreenLazarsfeldI} \cite{GreenLazarsfeldII}]
    $S^i(X)$ are finite unions of translates of subtori of $\operatorname{Pic}^0(X)$.
\end{theorem}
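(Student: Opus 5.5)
The plan is to follow the route through the Dolbeault cohomology of line bundles twisted by one-forms, using Hodge theory to make the loci algebraic, together with Simpson's theorem on the structure of absolute closed subsets.

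\textbf{Step 1: rigidity of the loci and deformation theory.} For $L\in\operatorname{Pic}^0(X)$ and a tangent direction $v\in H^1(X,\strucsheaf_X)$, consider the derivative complex
\[
H^{i-1}(X,L)\xrightarrow{\cup v} H^i(X,L)\xrightarrow{\cup v} H^{i+1}(X,L).
\]
By \cite{GreenLazarsfeldI}, the tangent cone to $S^i(X)$ at $L$ is contained in the set of $v$ for which this complex fails to be exact. Hodge symmetry identifies $H^1(X,\strucsheaf_X)$ with $\overline{H^0(X,\Omega^1_X)}$. Using this, one shows that $S^i(X)$ is a closed analytic subset whose irreducible components have tangent cones that are linear subspaces at every point.

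\textbf{Step 2: passage to rank-one local systems.} Unitary flat line bundles $L$ correspond to characters $\rho\in\operatorname{Hom}(\pi_1(X),U(1))$. The Hodge decomposition gives
\[
H^i(X,\C_\rho)=\bigoplus_{p+q=i} H^q\bigl(X,\Omega^p_X\otimes L\bigr).
\]
Hence $S^i(X)$ equals the intersection of a Betti-type locus with the compact torus $\operatorname{Char}(X)^{U(1)}\cong \operatorname{Pic}^0(X)$; more precisely, it is the image of the Dolbeault locus $\{(L,\theta):H^i$ of the Higgs complex $\neq0\}$ restricted to $\theta=0$. The jump loci $\Sigma^i_k=\{\rho:\dim H^i(X,\C_\rho)\ge k\}$ are algebraic in the Betti moduli space $\operatorname{Hom}(\pi_1,\C^*)$, since they are cut out by Fitting ideals of a finite complex computing group cohomology. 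They are also algebraic in the de Rham and Dolbeault moduli spaces. By Simpson's theorem, a subset that is algebraic in all three realizations, and defined over $\bar{\Q}$ in Betti, is a finite union of torsion translates of subtori. The Higgs locus with $\theta=0$ recovers the Dolbeault graded pieces.

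\textbf{Step 3: conclusion.} It remains to see that $S^i(X)$, which concerns the single piece $q=i,\ p=0$, is itself a union of translated subtori, not only the total cohomology locus. I would apply Step 1 directly: an irreducible analytic subvariety of a complex torus whose tangent cone at each point is a linear subspace parallel to a fixed subspace (by the Hodge-symmetric argument of \cite{GreenLazarsfeldII}, this subspace is constant along the component) is a translate of a subtorus.

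\textbf{Main obstacle.} I expect the hardest step to be proving that the tangent cones are linear and constant. The first thing I would try is Green--Lazarsfeld's higher-order deformation argument: use harmonic representatives and the Kähler identities to show that the derivative complex governs all orders of deformation, so that $S^i$ is locally isomorphic to its tangent cone. That local linearity then globalizes to a subtorus.
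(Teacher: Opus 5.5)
\paragraph{Main gap: Step 1.} The paper gives no proof of this statement. It is quoted from \cite{GreenLazarsfeldI} and \cite{GreenLazarsfeldII}, so the natural benchmark is the argument of \cite{GreenLazarsfeldII}. Measured against it, your proposal has a genuine gap: the step that carries all of the content is asserted, and the reason you give for it does not work.

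In Step 1, \cite{GreenLazarsfeldI} provides only an \emph{inclusion} of the tangent cone $TC_L S^i(X)$ into the cone $\mathcal{R}^i(L)$ of those $v\in H^1(X,\strucsheaf_X)$ for which $H^{i-1}(X,L)\to H^i(X,L)\to H^{i+1}(X,L)$ fails to be exact. That cone is homogeneous, but in general it is determinantal rather than linear. An upper bound on tangent cones can only give codimension estimates, which is how it is used in \cite{GreenLazarsfeldI}, and Hodge symmetry does not turn it into linearity. Even the conclusion you state would not suffice on its own: a smooth curve of genus $\geq 2$ in its Jacobian has linear tangent cones at every point and is not a translate of a subtorus. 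So everything rests on the extra ``constancy along the component'', which you assert without argument.

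\paragraph{The missing idea.} What is needed is the theorem of \cite{GreenLazarsfeldII} that the derivative complex governs deformations to all orders. In the exponential coordinates $v\mapsto L\otimes\exp(v)$, the germ of $S^i_m(X)$ at $L$ is \emph{equal} to the germ at $0$ of the cone $\mathcal{R}^i_m(L)$. This is the set of $v$ for which $(H^\bullet(X,L),\cup v)$ has $i$-th cohomology of dimension at least $m$. It is an equality of germs inside $\operatorname{Pic}^0(X)$, not just an abstract isomorphism of the germ with its tangent cone. Equivalently, near $L$ the derived direct image of the Poincar\'e bundle is quasi-isomorphic to $H^\bullet(X,L)\otimes\strucsheaf$ with differential $\cup v$.

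This is a formality statement proved with harmonic theory. If $\alpha$ is a harmonic $(0,q)$-form with values in the unitary flat bundle $L$ and $\omega$ is a holomorphic $1$-form, then $\bar\omega\wedge\alpha$ is again harmonic, since its conjugate is a holomorphic form. So harmonic forms, with differential $\bar\omega\wedge$, form a subcomplex of the perturbed Dolbeault complex $(A^{0,\bullet}(L)\otimes\strucsheaf,\ \bar\partial+\bar\omega\wedge)$. A perturbation argument with the Green operator, whose correction terms vanish on these forms, shows the inclusion is a quasi-isomorphism near $L$. Your ``main obstacle'' paragraph names this correctly, but it is the proof, not a detail to be supplied later.

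\paragraph{Globalization and Step 2.} Given that theorem, the globalization should go through a general point of a component. The derivative complex only controls the germ of $S^i(X)$ itself, which is a union of several cones where components meet. At a smooth point $L$ of a component $Z$, a cone whose germ at the vertex is smooth is a linear subspace $W$, so $L\otimes\exp(W)\subset Z$ near $L$. The identity theorem applied to the analytic set $\{w\in W: L\otimes\exp(w)\in Z\}$ then gives $L\otimes\exp(W)\subset Z$ globally. Hence $L\otimes T'\subset Z$ for the closure $T'$ of $\exp(W)$, and comparing real dimensions forces $T'=\exp(W)$ to be a complex subtorus with $Z=L\otimes T'$.

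Step 2, by contrast, contributes nothing to the statement. By the Hodge decomposition, the unitary slice of the Betti jump locus, and likewise the $\theta=0$ slice of the Higgs jump locus, is $\bigcup_{p+q=i}\{L: H^q(X,\Omega^p_X\otimes L)\neq 0\}$, not $S^i(X)$. Knowing that this union is a finite union of torsion translates of subtori says nothing about its constituent with $p=0$, which is merely some closed subset of it. So invoking Simpson's theorem, which is deeper and later than the result you are proving, buys nothing here. As you concede in Step 3, everything falls back on the unproved local linearity of Step 1.
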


Beauville and Catanese \cite{Beauville1992} conjectured that the translates are always by torsion points, and this was proved by Simpson \cite{SimpsonmoduliofRankOne}. There is also a positive characteristic proof \cite{PinkRoessler}. A more general statement about torsion translates for mixed Hodge modules with integral structures on abelian varieties was proved by Schnell \cite{torsionpoints}.

Generic vanishing theory has various applications on geometry and topology of irregular varieties. For example, it follows directly from the generic vanishing theorem that for a variety with maximal dimension, $\chi(X,\omega_X)\geq 0$ \cite[p.390]{GreenLazarsfeldI}. It also leads to a numerical characterization for varieties birational to abelian varieties \cite{chenhaconcharacterization}. For other applications, see \cite{SimpsonmoduliofRankOne}, \cite{ArapuraGreenLazarsfeldSets}, \cite{EinLazarsfeldsingularitiesofTheta}, \cite{chenHaconAlgebraicFiberSpace}, \cite{chenhaconcharacterization}, \cite{Pareschi}, \cite{popaschnell}, \cite{Zerosofoneforms}.

Later Hacon \cite{Hacongenericvanishing} found another beautiful proof of the generic vanishing theorem, via Fourier-Mukai transforms and properties of higher direct images of dualizing sheaves $R^ia_* \omega_X$. It was further explored via the Albanese map and decomposition theorem of Hodge modules in \cite{popaschnell} (there's also a recent generalization to singular varieties \cite{duc2024genericvanishingsingularvarieties}). These studies and techniques also leads to the celebrated work on zeros of one-forms on varieties of general type \cite{Zerosofoneforms}. After the work of Popa and Schnell \cite{popaschnell}, it was clear that generic vanishing theory is deeply connected with properties of mixed Hodge modules on abelian varieties, and the study of them can be done via Fourier-Mukai transform of $\D$-modules developed by Laumon and Rothstein \cite{LaumonFouriertransform} \cite{Rothstein}. In \cite{holonomicDmodules}, Schnell systematically studied holonomic $\D$-modules on abelian varieties, their cohomology support loci and Fourier-Mukai transforms. There were also studies about generic vanishing phenomenon of cohomology of perverse sheaves on abelian varieties \cite{kramergenericvanishing}, and perverse sheaves on semi-abelian varieties \cite{mellinpropagation} \cite{perversesemiabelian}. In this paper, we continue the study of mixed Hodge modules on abelian varieties, along the lines of \cite{holonomicDmodules}. We study generic vanishing phenomenon for some coherent objects associated to $\D$-modules (the graded de Rham complexes), and apply it to study mixed Hodge modules on abelian varieties.

In \cite{popaschnell}, the generic vanishing for Dolbeault cohomologies (defined later) of pure Hodge modules of geometric origin was studied using the results of \cite{GreenLazarsfeldI}, \cite{GreenLazarsfeldII}, \cite{ArapuraGreenLazarsfeldSets}, and \cite{Simpson1992} on cohomology of harmonic bundles of rank one. It is natural to wonder whether part of their results is a general phenomenon for mixed Hodge modules on abelian varieties, and we show this is indeed the case, by using the technique of Fourier-Mukai transform, the properties of mixed Hodge modules on abelian varieties, and results in \cite{holonomicDmodules} about holonomic $\D$-modules on abelian varieties. As an application, our result also gives an algebraic proof of equality of dimensions of Dolbeault and de Rham cohomologies, see \corollaryref{cor:equality of dimension}, similar to \cite[Lemma 2.2]{Simpson1992} in rank one, without using harmonic metrics.

The motivation for this work is to study Fourier-Mukai transforms of mixed Hodge modules on abelian varieties. It was proved in \cite{LaumonFouriertransform} and \cite{Rothstein} that the derived category of coherent $\D$-modules on an abelian variety $A$ is equivalent to the derived category of coherent sheaves on $A^\natural$ (the moduli space of flat line bundles on $A$). Then one could study $\D$-modules on abelian varieties by studying coherent sheaves on $A^\natural$, and ask what kind of properties of sheaves on $A^\natural$ reflect properties of $\D$-modules on $A$. Holonomicity was considered in \cite{holonomicDmodules}. \cite{holonomicDmodules} extended the construction of Laumon and Rothstein, and the extended Fourier-Mukai transform for filtered $\D$-modules was constructed, by taking into account the filtration and $\lambda$-connections, and it lives over the twistor line $\mathbb{A}^1_\lambda$ of $\lambda$-connections. The transform of Laumon and Rothstein are sheaves on the fiber $\lambda=1$. It's expected that for mixed Hodge modules, equipped with the Hodge filtration, the Fourier-Mukai transform behaves nicely along the twistor line. And one hopes that adding the information of a filtration that behaves well will give further refinements for transforms of mixed Hodge modules. Our result give some applications along this line, see section 7 and section 8.


\textbf{Notation and convention of mixed Hodge modules.} In this paper, $A$ will denote an abelian variety, and $\hat{A}$ is the dual abelian variety. We will use $V=H^0(A,\Omega_A^1)$ to denote the space of $1$-forms on $A$. We consider $\D$-modules as quasi-coherent sheaves admitting a flat connection. Projective bundle associated to a vector bundle is the projective bundle of lines in that vector bundle. Given a filtered $\D$-module $(\M,F)$ on $X$, $\gr^F_\bullet\M$ means the graded $\operatorname{Sym}T_X$-module on $X$, while $\gr^F\M$ denotes the associated quasi-coherent sheaf on the cotangent bundle $T^* X$. Note that the theory of complex mixed Hodge modules and their derived category are still being developed \cite{MHMproject}, so we restrict ourselves to use the derived category of $\Q$-mixed Hodge modules constructed by \cite{SaitoMHM}. However, it is expected all the results in this paper to be true for complex mixed Hodge modules and their derived category, once the theory is fully developed, since our proofs only use some formal properties of their derived category and tensor product. Nevertheless, the same proof for our main theorem about cohomology support loci works for pure complex Hodge modules (see \theoremref{thm:pure case}), since the main ingredient we use in the proof is the decomposition theorem, and the decomposition theorem for pure complex Hodge modules is fully established in \cite{MHMproject}.

Starting from section 5, whenever we need to take tensor product of a mixed Hodge module with a unitary rank connection $(L_{\chi},\nabla)$, by \theoremref{thm:rational translates}, we can reduce to the case that there's a unitary $\Q$-VHS containing $(L_{\chi},\nabla)$ as a direct summand. This extra technical operation is due to the fact that we are using derived category of mixed Hodge modules defined over $\Q$, and we are only allowed to take tensor product with $\Q$-VHS in this category. Since all of our statements hold true under taking direct summands, adding extra summands in the proof doesn't affect our conclusion. We will explain this technical point again in section 5, see the remark before \theoremref{thm:pure case}.

\subsection{The main theorem}

We define the main object that we will study, and state our main theorem in this section.

Recall that a mixed Hodge module on a smooth variety $X$ of dimension $g$ is a filtered $\D$-module, meaning there is an increasing filtration of $\M$ by $\strucsheaf_X$-coherent subsheaves $F_k \M$, such that $F_l \D_X\cdot F_k \M\subset F_{k+l} \M$ ($F_l\D_X$ is the order filtration on differential operators). The filtration on $\M$ induces a filtration on the de Rham complex of $\mathcal{M}$ by the formula
$$
F_k \mathrm{DR}_X(\mathcal{M})=\left[F_k \mathcal{M} \rightarrow \Omega_X^1 \otimes F_{k+1} \mathcal{M} \rightarrow \cdots \rightarrow \Omega_X^g \otimes F_{k+g} \mathcal{M}\right].
$$
The associated graded complex for this filtration is
$$
\operatorname{gr}_k^F \mathrm{DR}_X(\mathcal{M})=\left[\operatorname{gr}_k^F \mathcal{M} \rightarrow \Omega_X^1 \otimes \operatorname{gr}_{k+1}^F \mathcal{M} \rightarrow \cdots \rightarrow \Omega_X^g \otimes \operatorname{gr}_{k+g}^F \mathcal{M}\right], $$
which is a complex of coherent $\mathscr{O}_X$-modules placed in degrees $-g, \ldots, 0$. They enjoy many good properties: $E_1$-degeneration of Hodge-de Rham spectral sequence, Kodaira vanishing, decomposition theorem \cite{Saito1988}, and behaves well under duality \cite[Lemma 8.4]{Saitovanishing}. We also require that our mixed Hodge modules are defined over $\Q$, i.e., there's a constructible complex of $\Q$-vector spaces $P$, such that $P\otimes_{\Q}\C \simeq \operatorname{DR}(\M)$.

\begin{definition}[Dolbeault complex of mixed Hodge modules twisted by a one-form]
    Given a mixed Hodge module on a smooth variety $X$ of dimension $g$, $L\in \operatorname{Pic}^0(X)$ and $\theta\in H^0(X,\Omega_X^1)$. We define the following complex of quasi-coherent sheaves on $X$, called the Dolbeault complex of $(\M,L,\theta)$:
    $$\operatorname{Dol}(\M, L,\theta)\defeq \left(\operatorname{gr}_{\bullet}^F \mathcal{M} \otimes L\right) \rightarrow \left(\Omega_X^1 \otimes \operatorname{gr}_{\bullet}^F \mathcal{M} \otimes L\right) \rightarrow \cdots \rightarrow \left(\Omega_X^g \otimes \operatorname{gr}_{\bullet}^F \mathcal{M} \otimes L\right)$$
    starting from degree $-g$ to $0$, with differentials given by the formula $$\omega \otimes m \mapsto(-1)^g\left(\theta \wedge \omega\right) \otimes m+(-1)^g \sum_{j=1}^g\left(d z_j \wedge \omega\right) \otimes \partial_j m.$$

    For a complex of mixed Hodge modules, we take the double complex of the above complex for each individual mixed Hodge module.
    
\end{definition}

When $L=\strucsheaf_X$ and $\theta=0$, this is just the direct sum of graded de Rham complexes. If $\M=\strucsheaf_X$, then 
$\operatorname{Dol}(\strucsheaf_X,L,\theta) = [L\rightarrow \Omega_X^1\otimes L\rightarrow \cdots\rightarrow \Omega_X^g\otimes L]$, 
and the differentials are $(-1)^g\theta$. This kind of complex appeared in \cite{GreenLazarsfeldI}. By $E_1$-degeneration of the Hodge-de Rham complex for mixed Hodge modules \cite{Saito1988}, the cohomology of $\operatorname{Dol}(\M,\strucsheaf_X,0)$ has the same dimension as the cohomology of the de Rham complex of $\M$. The motivation for this definition of Dolbeault complex comes from the formula of harmonic bundles \cite{Simpson1992} and \cite[Theorem 13.1]{holonomicDmodules}. Let $A$ be an abelian variety and $A^\natural$ be the moduli space of line bundles with a flat connection on $A$. $\hat{A}$ is the dual abelian variety and $V=H^0(A,\Omega_A^1)$.

\begin{definition} \label{def:support loci definition}
    Given a mixed Hodge module on $A$, we define
    $$S^k_m(\M)\defeq \{(L,\nabla)\in A^\natural: \dim H^k\left(A, \mathrm{DR}\left(\mathcal{M} \otimes_{\mathscr{O}_A}(L, \nabla)\right)\right )\geq m \},$$
    $$S^k_m(\operatorname{DR}(\M))\defeq \{\chi \in \operatorname{Char}(A): \dim H^k(A, \mathrm{DR}(\mathcal{M}) \otimes \chi )\geq m \},$$
    $$S^k_m(\gr^F\M) \defeq \{(L,\theta)\in \hat{A}\times V: H^k(A,\operatorname{Dol}(\M,L,\theta))  \geq m\},$$
    and let $S^k(\M)\defeq S^k_1(\M)$, $S^k(\gr^F\M)\defeq S^k_1(\gr^F\M)$, and similarly for objects in $\operatorname{D}^b_{\MHM}(A)$.
\end{definition}

It is clear that $\operatorname{RH}:A^\natural \rightarrow \operatorname{Char}(A)$ identifies $S^k_m(\M)$ with $S^k_m(\operatorname{DR}(\M))$ via the Riemann-Hilbert correspondence.

Let $\operatorname{NAH}:A^\natural\rightarrow \hat{A} \times V$ be the map given by non-abelian Hodge correspondence (which is only an $C^\infty$-isomorphism). A \textit{linear subvariety} in $A^\natural$ (resp. $\hat{A}\times V$) is a subvariety of the form $\IM( B^\natural\rightarrow A^\natural)$ (resp. $\IM (\hat{B}\times H^0(B,\Omega_B^1) \rightarrow \hat{A}\times H^0(A,\Omega_A^1))$), where $f:A\rightarrow B$ is a map of abelian varieties. We also say a Zariski closed subset of $A^\natural$ or $\hat{A}\times V$ is \textit{linear} if it's a union of translates of linear subvarieties. We have the following structure theorem for cohomology support loci of holonomic $\D_A$-modules.

\begin{theorem}[\cite{holonomicDmodules}]\label{thm:schnelllinearity}
    $S^k_m(\M)$ is linear for any holonomic $\D$-module on $A$.
\end{theorem}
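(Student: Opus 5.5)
The plan is to go through the Fourier--Mukai transform of Laumon and Rothstein. For a holonomic $\D_A$-module $\M$ (or a bounded complex with holonomic cohomology) this transform is an object $\fm(\M)\in \operatorname{D}^b_{coh}(\strucsheaf_{A^\natural})$. The idea is to read off the sets $S^k_m(\M)$ as cohomology jump loci of $\fm(\M)$, and then show that these loci are linear.

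\textbf{Step 1: base change.} For a point $(L,\nabla)\in A^\natural$ there is an isomorphism
$$Li^*_{(L,\nabla)}\fm(\M)\simeq R\Gamma\bigl(A,\mathrm{DR}(\M\otimes(L,\nabla))\bigr),$$
up to a shift and a sign, which comes from base change along the Poincaré bundle. Choose locally a bounded complex $E^\bullet$ of free modules representing $\fm(\M)$. Then $S^k_m(\M)$ is the locus where the fiber complex has cohomology of dimension $\geq m$ in degree $k$. This is a determinantal condition on the ranks of the differentials, so $S^k_m(\M)$ is Zariski closed in $A^\natural$.

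\textbf{Step 2: arithmetic input.} This is the main obstacle, because Zariski closedness alone does not give linearity. I would use the Riemann--Hilbert identification $S^k_m(\M)=\operatorname{RH}^{-1}S^k_m(\operatorname{DR}(\M))$. Here $\operatorname{RH}:A^\natural\to\operatorname{Char}(A)$ is biholomorphic but not algebraic. Both $S^k_m(\M)$ and $S^k_m(\operatorname{DR}(\M))$ are algebraic, the latter by the same base change applied to the Mellin transform of the perverse sheaf $\operatorname{DR}(\M)$. I would then invoke the Ax--Lindemann-type theorem of Simpson: an irreducible algebraic subvariety of $A^\natural$ whose image under the exponential-type map $\operatorname{RH}$ is also algebraic must be a translate of a linear subvariety $\IM(B^\natural\to A^\natural)$. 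Applying this to each irreducible component of $S^k_m(\M)$ gives linearity.

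\textbf{Step 3: the structure of the components.} To obtain subvarieties of the form $\IM(B^\natural\to A^\natural)$, where $B^\natural$ is a quotient-side object and not merely a subgroup of $A^\natural$, I would use the following facts:
\begin{itemize}
\item Every algebraic subgroup of $A^\natural$ whose image in $\operatorname{Char}(A)$ is algebraic is connected to an abelian subvariety via $\hat{\ }$-duality.
\item The group $A^\natural$ is an extension of $\hat A$ by $V$.
\item Any linear subgroup compatible with both the algebraic and the analytic structure comes from a surjection $A\to B$.
\end{itemize}
Finally, the statement passes to complexes by d\'evissage through the hypercohomology spectral sequence, or, more directly, by running Steps 1 and 2 on the complex itself.
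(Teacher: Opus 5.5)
Your proposal is correct and follows essentially the same route the paper attributes to Schnell. You get algebraicity of $S^k_m(\M)$ in $A^\natural$ from base change for $\fm_A(\M)$, algebraicity of its Riemann--Hilbert image $S^k_m(\operatorname{DR}(\M))$ in $\operatorname{Char}(A)$ from the Mellin transform, and then linearity from the fact that bi-algebraic irreducible subvarieties are translates of linear subvarieties $\IM(f^\natural\colon B^\natural\to A^\natural)$; your Step 3 is only a sketch, but that identification is part of the cited bi-algebraic result, so nothing essential is missing.
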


Our main theorem below is a counterpart of the above theorem, in the Dolbeault setting, and the different cohomology support loci are related by the non-abelian Hodge correspondence. A priori, it's not clear how the two cohomology support loci are related, since we take the graded de Rham complex and the differentials are modified by a one-form.
\begin{theorem} \label{thm: main theorem}
    Let $\M^\bullet\in \operatorname{D}_{\MHM}^b (A)$ be a complex of mixed Hodge modules on an abelian variety $A$. For all $k,m\in \Z$, $S^k_m(\gr^F\M^\bullet)$ is linear in $\hat{A}\times V$. Moreover, the image of  $\operatorname{NAH}|_{S^k_m(\M)}: S^k_m(\M^\bullet)\rightarrow \hat{A}\times V$ is equal to $S^k_m(\gr^F\M^\bullet)$.
\end{theorem}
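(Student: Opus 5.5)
Our plan is to pass through the Fourier--Mukai transform over the twistor line and use specialization from $\lambda=1$ to $\lambda=0$. Let $\mathscr{E}_A$ denote the universal $\lambda$-connection on $A\times A^\natural_\lambda$, where $A^\natural_\lambda\to\mathbb{A}^1_\lambda$ is the family of moduli spaces of line bundles with $\lambda$-connection. The fiber over $\lambda=1$ is $A^\natural$ and the fiber over $\lambda=0$ is $\hat A\times V$. Using the Rees module $R_F\M=\bigoplus F_k\M\, z^k$, the extended transform of \cite{holonomicDmodules} produces a complex $\tfm(\M^\bullet)$ on $A^\natural_\lambda$. By base change, its derived fiber at $\lambda=1$ computes $H^*(A,\mathrm{DR}(\M\otimes(L,\nabla)))$ and its derived fiber at $\lambda=0$ computes $H^*(A,\operatorname{Dol}(\M,L,\theta))$; this is the content of \cite[Theorem 13.1]{holonomicDmodules}. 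So $S^k_m(\gr^F\M^\bullet)$ and $S^k_m(\M^\bullet)$ are the jump loci of $\tfm(\M^\bullet)$ restricted to the two fibers.

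\textbf{Step 1.} Reduce to a single mixed Hodge module, and then to the pure case. We induct on the weight filtration, or work directly with complexes. The reduction for complexes is subtle, since jump loci are not additive in triangles. We would instead argue via the decomposition theorem.
\begin{itemize}
\item By \theoremref{thm:rational translates}, twisting by a unitary rank one connection corresponds to a translate in $\hat A$. This reduces us to studying loci along linear families.
\item For pure $\M$, the $\mathbb{C}^*$-action $\theta\mapsto t\theta$ on the Dolbeault side is realized by the grading of $\gr^F$. Hence $S^k_m(\gr^F\M)$ is $\mathbb{C}^*$-stable in the $V$-direction.
\item On the de Rham side, Schnell's linearity (\theoremref{thm:schnelllinearity}) gives $S^k_m(\M)$ as a finite union of translates of $\IM(B^\natural\to A^\natural)$, and the translates can be taken by points.
\end{itemize}

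\textbf{Step 2.} Show the key identity of dimensions
$$\dim H^k(A,\mathrm{DR}(\M\otimes(L,\nabla)))=\dim H^k(A,\operatorname{Dol}(\M,\operatorname{NAH}(L,\nabla))).$$
We first handle the unitary case, where $\theta=0$ and $\nabla$ is unitary. There $\M\otimes(L,\nabla)$ is again a mixed Hodge module, with filtration $F\M\otimes L$. Saito's $E_1$-degeneration gives equality of dimensions, since $\operatorname{Dol}(\M,L,0)=\gr^F\mathrm{DR}(\M\otimes(L,\nabla))$.

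For general $(L,\nabla)$ we decompose $\operatorname{NAH}(L,\nabla)=(L_u,\theta)$ and would use the projection formula along a quotient $f\colon A\to B$. The pushforward $f_+\M$ is a complex of mixed Hodge modules. By the decomposition theorem it splits in the pure case, and its cohomology sheaves have zero-dimensional-support-type pieces. Applying the Fourier transform on $B$, the transform of a pure Hodge module is generically a vector bundle. The expected obstruction is to compare ranks off the jump locus with the $\lambda=0$ fiber. For this we would use flatness of $\tfm$ along the $\mathbb{C}^*$-orbit $\lambda\mapsto$ rescaling, which identifies fibers over $\lambda\neq0$ with $A^\natural$ via $(L,\nabla)\mapsto(L,\lambda\nabla)$. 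Semicontinuity then gives the inequality $\dim_{\lambda=0}\ge\dim_{\lambda=1}$.

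\textbf{Step 3.} Obtain the reverse inequality from duality and Euler characteristics. Apply semicontinuity to $\mathbb{D}\M$ using Saito's duality $\gr^F\mathrm{DR}(\mathbb{D}\M)\simeq R\sheafhom(\gr^F\mathrm{DR}\M,\omega_A)[g]$. This matches degree $k$ with degree $-k$, and combined with constancy of the alternating sum along $\lambda$ it forces equality in each degree. Equality of dimensions then gives the equality of loci directly. Linearity of $S^k_m(\gr^F\M^\bullet)$ follows because $\operatorname{NAH}$ maps $\IM(B^\natural\to A^\natural)$ and its translates onto $\IM(\hat B\times H^0(B,\Omega^1_B)\to\hat A\times V)$ and its translates.

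\textbf{Main obstacle.} The hard point is Step 2/3. Semicontinuity alone only compares a special fiber with generic ones, not pointwise $\lambda=1$ with $\lambda=0$ along the $\operatorname{NAH}$ correspondence, which is not algebraic. Our first attempt would be to prove that the cohomology sheaves of $\tfm(\M^\bullet)$ are flat over $\mathbb{A}^1_\lambda$ and $\mathbb{C}^*$-equivariant. To do this, we would reduce via Step 1 to pieces supported on translates of linear subvarieties, where an explicit computation on each linear stratum matches the two sides.
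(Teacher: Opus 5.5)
Your proposal has a genuine gap at its core. The pointwise comparison in Steps 2--3 is the whole content of the theorem, and, as you note at the end, it is not established. Semicontinuity along the $\C^*$-orbit of $(L,\nabla)$ in $E(A)$ compares $(L,\nabla)$ only with the limit of $(L,t\nabla)$ as $t\to 0$. That limit is $(L,0)$, not $\operatorname{NAH}(L,\nabla)$, which has nonzero Higgs field as soon as $\nabla$ is not unitary. No algebraic family over $\mathbb{A}^1_\lambda$ joins these two points. Combined with $E_1$-degeneration at $(L,0)$, all you obtain is $\dim H^k\mathrm{DR}(\M\otimes(L,\nabla_L))\geq\dim H^k\mathrm{DR}(\M\otimes(L,\nabla))$. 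This is a purely de Rham statement and says nothing about $\operatorname{Dol}(\M,K,\theta)$ with $\theta\neq 0$. You also cannot take flatness of $\mathcal{H}^k\tfm$ as an input: in the paper \theoremref{thm: flatness} is deduced \emph{from} the main theorem, and you give no independent argument for it.

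Step 3 fails too. Semicontinuity for $\mathbf{D}_A\M$, after dualizing fibers via \theoremref{thm:functoriality}(b), gives again $\dim_{\lambda=0}\geq\dim_{\lambda=1}$ for $\M$ (at negated points), i.e.\ the same inequality rather than its reverse. One-sided inequalities in every degree plus equal Euler characteristics cannot force equality, since jumps come in adjacent pairs; the complex $\strucsheaf\xrightarrow{\lambda}\strucsheaf$ on $\mathbb{A}^1_\lambda$ is the basic example. Step 1 does not reduce mixed objects or complexes to pure ones either: the decomposition theorem does not split them, and jump loci are not additive in triangles.

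The missing idea is to reverse your logic: the paper never compares the fibers over $\lambda=0$ and $\lambda=1$ pointwise. It first proves linearity of $S^k_m(\gr^F\M^\bullet)$ intrinsically on the Dolbeault side, and then the comparison is formal:
\begin{enumerate}
\item A $\C^*$-stable linear set in $\hat A\times V$ is a union of sets $(L,0)+\IM(\hat B\times H^0(B,\Omega^1_B))$, so it is determined by its trace on $\hat A\times\{0\}$.
\item On $\hat A\times\{0\}$, $E_1$-degeneration identifies that trace with the unitary points of $S^k_m(\M^\bullet)$.
\item By \theoremref{thm:unitarytranslates}, $S^k_m(\M^\bullet)$ is a union of unitary translates of linear subvarieties, so it too is determined by its unitary points, and $\operatorname{NAH}$ maps each such translate onto the corresponding Dolbeault set.
\end{enumerate}

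Linearity itself is proved by induction on $\dim A$. The engine is \lemmaref{lem:induction lemma}, $(f^\natural)^{-1}S^k_m(A,\gr^F\M)=S^k_m(B,\gr^Ff_+\M)$: any locus lying in a unitary translate of a proper linear subvariety is linear by induction.
\begin{enumerate}
\item For a simple Hodge module with $\chi(\M)=0$, one uses \theoremref{thm:chi=0}.
\item If $\chi(\M)\neq 0$ and $k\neq 0$, Künneth and propagation give $S^k(\gr^F\M)=S^{kd}(\gr^F\M_d)$ with $\M_d=m_+(\M^{\boxtimes d})$. For $d\gg 0$ the decomposition theorem rewrites this as $\bigcup_{q\geq 1}S^{kd-q}(\gr^F\mathcal{H}^q\M_d)$. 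Each $\mathcal{H}^q\M_d$ with $q\neq 0$ has Euler characteristic zero, so it comes from lower-dimensional quotients.
\item Higher $m$, mixed modules (via the weight filtration) and complexes (via the spectral sequence $H^p(\operatorname{Dol}(\mathcal{H}^q\M^\bullet,L,\theta))\Rightarrow H^{p+q}(\operatorname{Dol}(\M^\bullet,L,\theta))$) are handled the same way. One shows each locus is either everything or contained in a union of proper linear subvarieties, then invokes the lemma again.
\end{enumerate}
Your plan lacks any device of this kind, one that controls Dolbeault cohomology at points with $\theta\neq 0$ without passing through $\lambda=1$.
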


\begin{corollary}[Generic vanishing of Dolbeault cohomology] \label{cor: generic vanishing for Dolbeault cohomology}
    If $\M$ is a mixed Hodge module on $A$, then $\operatorname{codim} S^k(\gr^F\M)\geq |2k|$, and $\fmd_A(\M)$ is an $m$-perverse coherent sheaf on $\hat{A}\times V$ in the sense of Kashiwara.
\end{corollary}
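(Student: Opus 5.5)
The corollary should follow from \theoremref{thm: main theorem} combined with the codimension bounds Schnell proved for holonomic $\D$-modules in \cite{holonomicDmodules}. The first step is to recall that for a holonomic $\D_A$-module $\M$ one has $\codim_{A^\natural} S^k(\M)\geq 2k$ for $k\geq 0$. Since the de Rham complex is placed in degrees $-g,\dots,0$, only negative $k$ occur for a single module, and one also has $\codim S^{k}(\M)\geq |2k|$. This is exactly the statement that $\fm_A(\M)$ is a perverse coherent sheaf in Kashiwara's sense, i.e.\ $\codim \supp \mathcal{H}^{i}\geq 2i$ together with the dual condition. The same bounds hold for the loci $S^k(\operatorname{DR}(\M))$, because the perverse sheaf $\operatorname{DR}(\M)$ has the corresponding generic vanishing properties.

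The second step transfers these bounds to the Dolbeault side. By \theoremref{thm: main theorem}, $S^k(\gr^F\M)=\operatorname{NAH}(S^k(\M))$, and both sets are linear. The map $\operatorname{NAH}$ is a real-analytic isomorphism $A^\natural\to\hat{A}\times V$, so it preserves real dimension. Moreover, it sends a translate of $\IM(B^\natural\to A^\natural)$ to a translate of $\IM(\hat B\times H^0(B,\Omega^1_B)\to \hat A\times V)$, and these two have the same complex dimension $2\dim B$. It follows that complex codimensions agree componentwise, giving $\codim S^k(\gr^F\M)=\codim S^k(\M)\geq |2k|$.

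The third step deduces the perversity of $\fmd_A(\M)$. I would use the base change description: the derived restriction of $\fmd_A(\M)$ to a point $(L,\theta)$ computes $H^\bullet(A,\operatorname{Dol}(\M,L,\theta))$, up to the standard shift. The usual semicontinuity argument, through descending induction on the top nonvanishing cohomology sheaf, converts $\codim S^k\geq 2k$ into $\codim\supp \mathcal{H}^k\fmd_A(\M)\geq 2k$. This gives the condition in ${}^mD^{\leq 0}$.

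For the dual condition, apply the same reasoning to the dual mixed Hodge module $\mathbb{D}\M$. Saito's compatibility of $\gr^F\operatorname{DR}$ with Grothendieck duality \cite[Lemma 8.4]{Saitovanishing}, together with Grothendieck duality for the Fourier--Mukai transform, identifies $R\sheafhom(\fmd_A(\M),\strucsheaf)$ with $\fmd_A(\mathbb{D}\M)$ up to the pullback by $-1$ and a shift. This gives the bound on the supports of $\mathcal{E}xt^i(\fmd_A(\M),\strucsheaf)$, and hence perversity.

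I expect the duality step to be the main obstacle. The difficulty is tracking the exact shift and the sign $(L,\theta)\mapsto(L^{-1},-\theta)$ so that the dual transform really lands in the required degrees.
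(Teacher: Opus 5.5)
Your proposal is correct and is essentially the paper's argument: the paper combines \theoremref{thm: main theorem} (so $S^k(\gr^F\M)=\operatorname{NAH}(S^k(\M))$ has the same codimension as $S^k(\M)$) with Schnell's bound $\codim S^k(\M)\geq 2|k|$ and his criterion for $m$-perversity in terms of these loci. Two small corrections: your remark that only negative $k$ occur for a single module is false, since the hypercohomology lives in degrees $-g,\dots,g$ and Schnell's bound covers all of them; and the duality step is not a real obstacle, because part (b) of \theoremref{thm:functoriality} involves no shift, and in any case $S^k(E^\vee)=S^{-k}(E)$ for a perfect complex $E$ with $E^\vee=\mathbf{R}\sheafhom(E,\strucsheaf)$, so the negative-degree bounds already give the dual condition.
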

\begin{proof}
    It's a direct corollary of the main theorem and \cite[Theorem 4.1, 4.2]{holonomicDmodules}.
\end{proof}

\begin{corollary}
    For a mixed Hodge module $\M$ on $A$, $H^k(A,\operatorname{Dol}(\M,L,\theta)) =0$ for $k\neq 0$ and general $(L,\theta) \in \hat{A}\times V$. In particular $\chi(A,\operatorname{Dol}(\M,L,\theta)) \geq 0$ for any $(L,\theta)\in \hat{A}\times V$.
\end{corollary}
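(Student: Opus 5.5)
We derive the corollary from \corollaryref{cor: generic vanishing for Dolbeault cohomology}, which is a consequence of \theoremref{thm: main theorem}. First, for a single mixed Hodge module $\M$, that corollary gives $\codim S^k(\gr^F\M)\geq |2k|$ for every $k$. For $k\neq 0$ this codimension is at least $2>0$. Each $S^k(\gr^F\M)$ is Zariski closed: it is linear by \theoremref{thm: main theorem}, and in any case closed by upper semicontinuity of cohomology of the complex $\fmd$ over $\hat A\times V$. Hence $S^k(\gr^F\M)$ is a proper closed subset for $k\neq 0$.

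Next we check that only finitely many $k$ occur. The complex $\operatorname{Dol}(\M,L,\theta)$ sits in degrees $-g,\dots,0$, and $A$ has dimension $g$, so its hypercohomology can be nonzero only for $k\in[-g,g]$. Taking the union of the finitely many proper closed sets $S^k(\gr^F\M)$, $k\neq0$, gives a proper closed subset of the irreducible variety $\hat A\times V$. Its complement is a dense open set on which $H^k(A,\operatorname{Dol}(\M,L,\theta))=0$ for all $k\neq 0$.

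For the Euler characteristic, the main point is that $\chi(A,\operatorname{Dol}(\M,L,\theta))$ does not depend on $(L,\theta)$. We verify this on the family. Let $\mathcal{P}$ be the Poincaré bundle on $A\times \hat A$, pulled back to $A\times\hat A\times V$, and let $\theta$ be the tautological one-form. Then $\operatorname{Dol}(\M,\mathcal P,\theta)$ is a complex on $A\times \hat A\times V$. Its terms are coherent, because $\gr^F_\bullet\M$ is finitely generated, and they are flat over $\hat A\times V$, being pullbacks of coherent sheaves on $A$ tensored with a line bundle. The differential is $\mathcal{O}_{\hat A\times V}$-linear.

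Its derived pushforward $\fmd_A(\M)$ to $\hat A\times V$ is therefore a perfect complex whose derived fibers compute $H^\ast(A,\operatorname{Dol}(\M,L,\theta))$. The Euler characteristic of the fibers of a perfect complex is locally constant, and $\hat A\times V$ is connected, so $\chi$ is constant. At a general point it equals $\dim H^0\geq 0$ by the first part, hence it is $\geq 0$ everywhere.

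The one step needing care is the interaction of the grading with coherence. Since $\gr^F_\bullet\M$ is an infinite direct sum over $\bullet$, the complex is only quasi-coherent on $A$. I would handle this by viewing it as the coherent sheaf $\gr^F\M$ on $T^*A=A\times V$ and computing via the projection, which is affine in the $V$-direction. This is exactly the construction of $\fmd_A$ in \cite{holonomicDmodules}, so finiteness of the fibers $H^k(A,\operatorname{Dol}(\M,L,\theta))$ and perfectness follow from that construction, and the argument above applies.
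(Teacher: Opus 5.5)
Your argument is correct and matches the paper's own proof, which simply cites the preceding generic vanishing corollary ($\codim S^k(\gr^F\M)\geq |2k|$) together with deformation invariance of the Euler characteristic of the perfect complex $\fmd_A(\M)$; you have filled in the details of exactly that route. One slip: in your third paragraph you say the terms of the family Dolbeault complex are coherent, but they are only quasi-coherent. Your last paragraph repairs this correctly by using the resolution lemma (Lemma 13.3 of \cite{holonomicDmodules}) and base change along the flat projection $p_{23}$ to replace that complex by the coherent sheaf $\gr^F\M$ on $A\times V$.
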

\begin{proof}
    It follows from the last corollary, and deformation invariance of Euler characteristic. 
\end{proof}

The next corollary is an equality of dimensions of de Rham and Dolbeault cohomologies of twists of mixed Hodge modules. It says for the de Rham complex of a mixed Hodge module, tensored with a rank one local system, its cohomology can be computed by a complex of quasi-coherent sheaves. It can be seen as a partial generalization of Simpson's formula for cohomology of harmonic bundles \cite[Lemma 2.2]{Simpson1992} (see also \cite[Lemma 14.1]{popaschnell}) where the coefficients here are mixed Hodge modules, and the proof we give here does not use harmonic metrics.
\begin{corollary}\label{cor:equality of dimension}
    Let $X$ be a smooth projective variety and $\M$ is a mixed Hodge module on $X$. Then $$\dim H^k(X,\operatorname{DR}(\M\otimes (L,\nabla))) = \dim H^k(X,\operatorname{Dol}(\M,K,\theta)),$$
    for any $k\in \Z$, where $(L,\nabla)$ is a flat line bundle on $X$ and $(K,\theta)$ is its image under the non-abelian Hodge correspondence.
\end{corollary}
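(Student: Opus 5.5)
The natural route is to pass through the Albanese map and reduce to the main theorem on the abelian variety $A=\operatorname{Alb}(X)$. Let $a\colon X\to A$ be the Albanese morphism, and choose a base point so that $a^*$ identifies $\hat{A}\times V$ with $\operatorname{Pic}^0(X)\times H^0(X,\Omega^1_X)$ and $A^\natural$ with the moduli of flat line bundles on $X$. Every rank one flat line bundle $(L,\nabla)$ on $X$ is then $a^*(L',\nabla')$ for a unique $(L',\nabla')\in A^\natural$. Its Higgs image is $(K,\theta)=a^*\operatorname{NAH}(L',\nabla')$, because the nonabelian Hodge correspondence in rank one is compatible with pullback: harmonic metrics pull back to harmonic metrics. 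Alternatively, one can see this directly by decomposing $\nabla$ into its unitary part and its holomorphic one-form part.

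Next, apply the pushforward $a_*$ in $\operatorname{D}^b_{\MHM}$ and use the projection formula on both sides. On the de Rham side,
\[
H^k\bigl(X,\operatorname{DR}(\M\otimes a^*(L',\nabla'))\bigr)\cong H^k\bigl(A,\operatorname{DR}(a_*\M\otimes(L',\nabla'))\bigr).
\]
On the Dolbeault side, I need Laumon's compatibility of the graded de Rham complex with proper pushforward, $\gr^F\operatorname{DR}(a_*\M)\cong Ra_*\gr^F\operatorname{DR}(\M)$, which is Saito's strictness of direct images. I need it in the twisted form: modifying the differential by $a^*\theta'$ and tensoring by $a^*L'$ on $X$ should correspond to the same operation on $A$. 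This should give
\[
H^k\bigl(X,\operatorname{Dol}(\M,a^*L',a^*\theta')\bigr)\cong H^k\bigl(A,\operatorname{Dol}(a_*\M,L',\theta')\bigr).
\]

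For the twisted statement, the point is that wedging with a pulled-back one-form commutes with the relative de Rham pushforward functor. I expect this to be the main obstacle. One way to arrange it is to view the twist by $\theta'$ as the $\lambda=0$ specialization of the twist by a $\lambda$-connection on the Rees module, where filtered direct images are strict. Another is to translate by $\theta'$ on $T^*A$ at the level of $\gr^F$ as coherent sheaves on the cotangent bundle, using the functorial description $\gr^F a_*\M = R p_*\, L(da)^*\gr^F\M$ through the correspondence $T^*X\leftarrow X\times_A T^*A\to T^*A$. Translation by a global one-form on $T^*A$ pulls back along $da$ to translation by $a^*\theta'$ on $T^*X$, so the compatibility follows.

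With both identifications in hand, it suffices to prove, for $\M^\bullet=a_*\M\in\operatorname{D}^b_{\MHM}(A)$,
\[
\dim H^k\bigl(A,\operatorname{DR}(\M^\bullet\otimes(L',\nabla'))\bigr)=\dim H^k\bigl(A,\operatorname{Dol}(\M^\bullet,\operatorname{NAH}(L',\nabla'))\bigr)\quad\text{for all }k.
\]
This follows from \theoremref{thm: main theorem}. Since $\operatorname{NAH}$ is a bijection, the theorem gives, for every $m$, that $(L',\nabla')\in S^k_m(\M^\bullet)$ if and only if $\operatorname{NAH}(L',\nabla')\in S^k_m(\gr^F\M^\bullet)$. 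Taking the largest $m$ for which this membership holds yields equality of the two dimensions, which proves the corollary.
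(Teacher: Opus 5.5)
Your proposal follows the paper's proof: push $\M$ forward along the Albanese map and apply \theoremref{thm: main theorem} to $a_+\M\in\operatorname{D}^b_{\MHM}(\alb_X)$. You also correctly supply what the paper leaves implicit, namely the compatibility of $\operatorname{NAH}$ with $a^*$ and the identification $\operatorname{Dol}(a_+\M,L',\theta')\simeq \mathbf{R}a_*\operatorname{Dol}(\M,a^*L',a^*\theta')$ via Laumon's formula and base change along the translated zero section.

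One small correction: a flat line bundle on $X$ has the form $a^*(L',\nabla')$ only when it lies in the identity component of the character variety, equivalently when $K\in\operatorname{Pic}^0(X)$, which the paper's definition of $\operatorname{Dol}$ presupposes. So if $H_1(X,\Z)$ has torsion, your claim about ``every rank one flat line bundle'' should be restricted to that component, as the paper's argument also implicitly does.
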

\begin{proof}
    We push forward $\M$ by the Albanese map $\operatorname{alb}:X\rightarrow\alb_X$. By \cite{SaitoMHM}, $\operatorname{alb}_+ \M\in \operatorname{D}^b_{\MHM}(\alb_X)$ is a complex of mixed Hodge modules on $\alb_X$. Then it follows from the main theorem.
\end{proof}

\subsection{Fourier-Mukai transform}

The key tool that we will apply is the Fourier-Mukai transform of algebraic $\D$-modules on abelian varieties. We briefly review the classical one defined in \cite{LaumonFouriertransform} and \cite{Rothstein}, and the extended Fourier transform in \cite{holonomicDmodules}. The latter one takes into account of the information of a filtration. 

We briefly introduce Fourier-Mukai transform for $\D$-modules on abelian varieties following \cite[section 10]{holonomicDmodules}. Let $A$ be an abelian variety, and let $P$ denote the Poincar\'e line bundle on $A \times \hat{A}$. The Fourier-Mukai transform takes bounded complexes of algebraic
$\mathscr{D}_A$-modules to bounded complexes of quasi-coherent sheaves on
$A^\natural$. Consider $A^\natural$, the moduli space of line bundles with flat connection on $A$, the pullback $P^\natural = (\operatorname{id}_A \times \pi)^*P$ of the Poincar\'e bundle to the product $A \times A^\natural$ is endowed
with a universal integrable connection
\[
\nabla^\natural \colon P^\natural \to
\Omega^1_{A \times A^\natural / A^\natural} \otimes P^\natural
\]
relative to $A^\natural$. An algebraic left $\mathscr{D}_A$-module $\mathcal{M}$
may be interpreted as a quasi-coherent sheaf of $\mathscr{O}_A$-modules with a flat connection
\[
\nabla \colon \mathcal{M} \to \Omega_A^1 \otimes \mathcal{M};
\]
then
\[
p_1^*\nabla \otimes \operatorname{id} + \operatorname{id} \otimes \nabla^\natural
\]
defines a relative integrable connection on the tensor product
$p_1^*\mathcal{M} \otimes_{\mathscr{O}_{A \times A^\natural}} P^\natural$,
and we denote the resulting algebraic
$\mathscr{D}_{A \times A^\natural / A^\natural}$-module by the symbol
$p_1^*\mathcal{M} \otimes (P^\natural, \nabla^\natural)$. Given a complex of
algebraic $\mathscr{D}_A$-modules $(\mathcal{M}^\bullet, d)$, we define
\[
\operatorname{DR}_{A \times A^\natural / A^\natural}
\bigl(p_1^*\mathcal{M}^\bullet \otimes (P^\natural, \nabla^\natural)\bigr)
\]
as the single complex determined by the double complex
$(D^{\bullet,\bullet}, d_1, d_2)$, whose term in bidegree $(i,j)$ is equal to
\[
D^{i,j}
=
\Omega^{g+i}_{A \times A^\natural / A^\natural}
\otimes p_1^*\mathcal{M}^j \otimes P^\natural,
\]
and whose differentials are given by the formulas
\[
d_1
=
(-1)^g \bigl(p_1^*\nabla_{\mathcal{M}^j} \otimes \operatorname{id}
+ \operatorname{id} \otimes \nabla^\natural\bigr)
\qquad \text{and} \qquad
d_2 = \operatorname{id} \otimes p_1^*d \otimes \operatorname{id}.
\]

\begin{definition}[Fourier-Mukai transform of algebraic $\D_A$-modules]
    We define the \emph{Fourier--Mukai transform} of the complex
$\mathcal{M}^\bullet$ by the formula
\begin{equation*}
\operatorname{FM}_A(\mathcal{M}^\bullet)
=
\mathbf{R}p_{2*}
\operatorname{DR}_{A \times A^\natural / A^\natural}
\bigl(p_1^*\mathcal{M}^\bullet \otimes (P^\natural, \nabla^\natural)\bigr) \in \operatorname{D}^b_{\operatorname{coh}}(\strucsheaf_{A^\natural}).
\end{equation*}
\end{definition}

Because every differential in the relative de Rham complex is
$\mathscr{O}_{A^\natural}$-linear, $\operatorname{FM}_A(\mathcal{M}^\bullet)$
is naturally a complex of quasi-coherent algebraic sheaves on $A^\natural$. The following theorem says this is an equivalence of categories

\begin{theorem}[\cite{LaumonFouriertransform} and \cite{Rothstein}]
    The Fourier-Mukai transform gives rise to an equivalence of categories
$$
\mathrm{FM}_A: \mathrm{D}_{\operatorname{coh}}^b\left(\mathscr{D}_A\right) \rightarrow \mathrm{D}_{\operatorname{coh}}^b\left(\mathcal{O}_{A^{\natural}}\right)
$$
between the bounded derived category of coherent algebraic $\mathscr{D}_A$-modules and the bounded derived category of coherent algebraic sheaves on $A^{\natural}$.
\end{theorem}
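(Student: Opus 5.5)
The plan is to follow Mukai's original strategy for abelian varieties. I would write down an explicit quasi-inverse functor, given by the "same" kernel $(P^\natural,\nabla^\natural)$ read in the other direction. I would then show that both composites are isomorphic to the identity by computing the convolution of the kernels. The equivalence itself was proved by Laumon and Rothstein, and I would assume the standard facts on Mukai's transform between $A$ and $\hat A$.

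\emph{Construction of the inverse.} The relative connection $\nabla^\natural$ on $P^\natural$ is a connection in the $A$-direction. Hence for $\mathcal{F}\in \operatorname{D}^b_{\operatorname{coh}}(\strucsheaf_{A^\natural})$ the complex $\mathbf{R}p_{1*}\bigl(p_2^*\mathcal{F}\otimes P^\natural\bigr)$ carries a natural flat connection induced from $\nabla^\natural$. I would define
\[
\Phi(\mathcal{F}) \defeq \mathbf{R}p_{1*}\bigl(p_2^*\mathcal{F}\otimes (P^\natural)^{-1}\bigr)[\text{shift}],
\]
an object of $\operatorname{D}^b(\D_A)$, with the dual connection on $(P^\natural)^{-1}$. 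Coherence over $\D_A$ is not obvious, because $p_1$ is not proper. I would obtain it by filtering $\strucsheaf_{A^\natural}$, viewed over $\hat A$, through the affine-bundle structure $\pi\colon A^\natural\to \hat A$, which is a torsor under $V$. Pushing forward along $\pi$ gives a filtered object whose associated graded is $\operatorname{Sym}V^*$-type data. This filtration matches the order filtration on $\D_A$, so coherence reduces to coherence of Mukai's transform on the graded pieces.

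\emph{Kernel computation.} By flat base change and the projection formula, $\Phi\circ\fm_A$ and $\fm_A\circ\Phi$ are integral transforms with kernels built from the following relative cohomology along $A^\natural$ (respectively along $A$):
\[
\mathbf{R}p_{13*}\bigl(p_{12}^*P^\natural\otimes p_{23}^*(P^\natural)^{-1}\bigr).
\]
By the theorem of the square, $P^\natural_x\otimes (P^\natural_y)^{-1}\simeq P^\natural_{x-y}$. So it suffices to show that $\mathbf{R}\Gamma(A^\natural, P^\natural_x)$ vanishes for $x\neq 0$ and equals $\C$ in degree $0$ for $x=0$. I would then check that this computation holds in families, so that the kernel is the structure sheaf of the diagonal, up to the appropriate shift.

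For $x\neq 0$, I would write $\mathbf{R}\Gamma(A^\natural,P^\natural_x)=\mathbf{R}\Gamma(\hat A, P_x\otimes \pi_*\strucsheaf_{A^\natural})$. Here $\pi_*\strucsheaf_{A^\natural}$ has a filtration with graded pieces $\operatorname{Sym}^k$ of the trivial bundle, and Mukai's vanishing $H^*(\hat A,P_x)=0$ kills every graded piece. For $x=0$, I need $H^i(A^\natural,\strucsheaf)=0$ for $i>0$ and $H^0=\C$. This follows because the extension class of $\pi_*\strucsheaf_{A^\natural}$ is the tautological one, $V^*\simeq H^1(\hat A,\strucsheaf)$. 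Each connecting map $H^{i}(\operatorname{Sym}^k)\to H^{i+1}(\operatorname{Sym}^{k-1})$ is then Koszul-type multiplication by this class, and the resulting spectral sequence is exact except for $\C$ in degree $0$. On the $\fm_A\circ\Phi$ side, the analogous relative de Rham computation $H^*_{\operatorname{dR}}(A,(L,\nabla))$ vanishes unless $(L,\nabla)$ is trivial, by the standard computation for nontrivial rank one local systems.

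\emph{Main obstacle.} I expect the main difficulty to be the identification of $\pi_*\strucsheaf_{A^\natural}$ and the resulting acyclicity $H^{>0}(A^\natural,\strucsheaf)=0$ with $H^0=\C$. The second difficulty is ensuring that this computation holds uniformly in families, so that the kernel is exactly $\strucsheaf_\Delta$. I would handle the uniformity with the same filtration argument, working relatively over the base and using base change for the graded pieces, where everything reduces to Mukai's theorem on $A\times\hat A$.
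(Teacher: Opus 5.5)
The paper does not prove this theorem; it only cites Laumon and Rothstein, so your outline has to stand on its own. The overall Mukai-style strategy is sound, and your two pointwise computations are correct:
\begin{itemize}
\item $\mathbf{R}\Gamma(A^\natural,\pi^*P_x)=0$ for $x\neq 0$, by the filtration of $\pi_*\mathcal{O}_{A^\natural}$ and Mukai's vanishing;
\item $\mathbf{R}\Gamma(A^\natural,\mathcal{O}_{A^\natural})=\mathbb{C}$, by the polynomial de Rham complex $\wedge^\bullet W\otimes\operatorname{Sym}^\bullet W$ with $W=V^*\simeq H^1(\hat A,\mathcal{O}_{\hat A})$.
\end{itemize}

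\textbf{The main gap: the kernel of $\Phi\circ\mathrm{FM}_A$.} The kernel $K=\mathbf{R}p_{13*}\bigl(p_{12}^*P^\natural\otimes p_{23}^*(P^\natural)^{-1}\bigr)$ on $A\times A$ inherits $\nabla^\natural$ in the first variable and its dual in the second. So its cohomology sheaves are $\mathscr{D}_{A\times A}$-modules supported on $\Delta$. An $\mathcal{O}$-coherent $\mathscr{D}$-module is locally free, so $K$ is \emph{not} $\mathcal{O}_\Delta$ up to shift, contrary to what you claim.

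The kernel representing the identity on $\mathscr{D}_A$-modules is $\Delta_+\mathcal{O}_A$. As an $\mathcal{O}$-module this is $\mathcal{H}^g_\Delta(\mathcal{O}_{A\times A})$, which is not coherent. Your own fibre computation already detects this. The derived fibre of $\mathcal{O}_\Delta[-g]$ at a diagonal point has cohomology in $g+1$ degrees. That of $\mathcal{H}^g_\Delta(\mathcal{O}_{A\times A})[-g]$ is $\mathbb{C}$ in degree $0$, matching $\mathbf{R}\Gamma(A^\natural,\mathcal{O})=\mathbb{C}$.

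Your reduction ``it suffices to compute $\mathbf{R}\Gamma(A^\natural,P^\natural_x)$'' is also insufficient on its own. Derived fibres at closed points do not determine a non-coherent quasi-coherent complex; the sheaf of rational functions on $A$ has all of them zero. Running Mukai on the graded pieces only shows that $\mathbf{R}p_{1*}P^\natural$ is a filtered colimit of finite-length sheaves at $0$ placed in degree $g$.

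The missing idea is to use the $\mathscr{D}$-module structure:
\begin{itemize}
\item This colimit carries the flat connection induced by $\nabla^\natural$, so by Kashiwara's equivalence it is $\delta_0\otimes_{\mathbb{C}}N$ for some vector space $N$.
\item Only now does the fibre computation force $\dim N=1$.
\item Pulling back along $\mu(x,y)=x-y$ gives $K\simeq\mu^*\delta_0[-g]\simeq\Delta_+\mathcal{O}_A$ up to shift.
\item This kernel does give the identity, since $p_{2+}(p_1^*\mathcal{M}\otimes\Delta_+\mathcal{O}_A)\simeq\mathcal{M}$.
\end{itemize}

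\textbf{Two smaller points.}

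First, the kernel of $\mathrm{FM}_A\circ\Phi$ is $\nu^*\mathrm{FM}_A(\mathcal{O}_A)$ with $\nu(\alpha,\beta)=\beta-\alpha$. Here $\mathcal{O}_\Delta[-g]$ is the right answer. But vanishing of twisted de Rham cohomology only shows that the coherent complex $\mathrm{FM}_A(\mathcal{O}_A)$ is set-theoretically supported at the trivial connection. Derived fibres cannot distinguish $k(0)$ from longer Artinian modules: on a curve, $\mathcal{O}/(x^2)$ and $\mathcal{O}/(x)$ have the same derived fibre. So you need a genuine length-one argument. One that works:
\begin{itemize}
\item Filter $\pi_*\mathrm{FM}_A(\mathcal{O}_A)$ by $[P\otimes F_k\to\Omega^1_A\otimes P\otimes F_{k+1}\to\cdots]$, where $F_\bullet$ is the filtration of $\pi_*\mathcal{O}_{A^\natural}$.
\item On graded pieces $\nabla^\natural$ becomes wedge with the tautological form. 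You get $P$ tensored with strands of the Koszul complex $\operatorname{Sym}^\bullet V^*\otimes\wedge^\bullet V$.
\item Only one strand has cohomology, and it is one-dimensional. Together with $\mathbf{R}p_{2*}P\simeq k(\hat 0)[-g]$, this gives $\pi_*\mathrm{FM}_A(\mathcal{O}_A)\simeq k(\hat 0)[-g]$.
\end{itemize}

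Second, for coherence of $\Phi(\mathcal{F})$ it is not enough that Mukai's transform of each graded piece is coherent, since there are infinitely many pieces. You need:
\begin{itemize}
\item all of $\operatorname{gr}\pi_*\mathcal{F}$ as a coherent $\mathbb{C}^*$-equivariant sheaf on $\hat A\times V$;
\item its relative Mukai transform over $V$, a coherent sheaf on $A\times V=T^*A$, i.e.\ a finitely generated $\operatorname{gr}^F\mathscr{D}_A$-module;
\item a Rees-module argument to lift coherence.
\end{itemize}
Alternatively, prove the equivalence on unbounded quasi-coherent derived categories. There the bounded coherent complexes are exactly the compact objects on both sides, so coherence is preserved automatically.
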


Now we define the Fourier-Mukai transform on the Dolbeault moduli space of Higgs bundles. Let $V=H^0(A,\Omega_A^1)$. We have the following diagram:
\[
\begin{tikzcd}
A \times H^0(A, \Omega^1_A) 
&[3em] A \times \hat{A} \times H^0(A, \Omega^1_A) \arrow[l, "p_{13}"'] \arrow[r, "p_{23}"] \arrow[d, "p_{12}"'] 
&[3em] \hat{A} \times H^0(A, \Omega^1_A) \\
& A \times \hat{A} &
\end{tikzcd}
\]

\begin{definition}[Fourier-Mukai transform, on Dolbeault moduli space] \label{def:fmd}
    Let $\M$ be a mixed Hodge module on an abelian variety $A$. Let $\gr^F\M$ be the associated coherent sheaf on $T^* A \simeq A\times V$. We define the Dolbeault version of the Fourier-Mukai transform of $\M$ as
    $$\fmd_A(\M) \defeq \mathbf{R}\left(p_{23}\right)_*\left(p_{12}^* P \otimes p_1^* \omega_A \otimes p_{13}^*(\id_A \times \iota_V)^* \operatorname{gr}^F \mathcal{M}\right)\in \mathrm{D}_{\operatorname{coh}}^b\left(\mathcal{O}_{\hat{A}\times V}\right),$$
    where $P$ is the Poincaré line bundle on $A\times \hat{A}$, and $\iota_V=-\id_V$ is the involution on $V=H^0(A,\Omega_A^1)$. For $\M^\bullet\in \operatorname{D}^b_{\MHM}(A)$, it's defined in a similar way. 

\end{definition}
The involution $\iota$ appears because of a technical issue, see the lemma below. We can make this definition for any complex of filtered $\D$-modules, but we will only study mixed Hodge modules with its Hodge filtration in this paper.

It's not clear from the expression above that how $\fmd_A(\M)$ is related to the Dolbeault cohomologies of $\M$. It is shown below that this complex restricted to each closed point $(L,\theta)\in\hat{A}\times V$ is the Dolbeault cohomology of $\operatorname{Dol}(\M,L,\theta)$. We have the following lemma.

\begin{lemma}[Lemma 13.3 of \cite{holonomicDmodules}]
    Let $\mathscr{F}$ be a finitely generated $\operatorname{Sym} T_A$-module, and denote by $\tilde{\mathscr{F}}$ the associated coherent sheaf on $T^* A=A \times H^0\left(A, \Omega_A^1\right)$, $\theta_A$ is the universal $1$-form on $T^* A$. Then the complex
    $$ p_1^*\left(\mathscr{O}_A \otimes_{\mathscr{O}_A} \mathscr{F}\right) \rightarrow p_1^*\left(\Omega_A^1 \otimes_{\mathscr{O}_A} \mathscr{F}\right) \rightarrow \cdots \rightarrow p_1^*\left(\Omega_A^g \otimes_{\mathscr{O}_A} \mathscr{F}\right) $$
    where the maps are given by 
    $$ \omega \otimes m \mapsto (-1)^g (\theta_A \wedge \omega)\otimes m + (-1)^g \sum_{j=1}^{g} (dz_j \wedge \omega)\otimes \partial_j m.$$
    is a resolution of $p_1^* \Omega_A^g \otimes(\mathrm{id} \times \iota)^* \tilde{\mathscr{F}}$, where $\iota=-\id$ is the involution of $H^0\left(A, \Omega_A^1\right)$, 
\end{lemma}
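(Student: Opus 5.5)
This is a local statement on $T^*A = A\times V$, so I will prove it by reducing to a Koszul complex. Since $A$ is an abelian variety, $\Omega_A^1 \simeq \strucsheaf_A\otimes V$ is trivial, with basis $dz_1,\dots,dz_g$ of global translation-invariant one-forms and dual vector fields $\partial_1,\dots,\partial_g$. These vector fields generate $\operatorname{Sym}T_A = \strucsheaf_A[\partial_1,\dots,\partial_g]$ as a polynomial algebra over $\strucsheaf_A$.

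Let $y_1,\dots,y_g$ be the linear coordinates on $V$ dual to $dz_j$, so that the universal form is $\theta_A=\sum_j y_j\,dz_j$. Since $p_1$ is affine, $p_1^*(\Omega^k_A\otimes \mathscr{F})$ corresponds to the $\strucsheaf_A[y_1,\dots,y_g]$-module $\wedge^k V\otimes_{\C}\mathscr{F}[y]$, where $\mathscr{F}[y]=\mathscr{F}\otimes_{\C}\C[y_1,\dots,y_g]$. The differential becomes
\[
\omega\otimes m \mapsto (-1)^g\sum_j (dz_j\wedge\omega)\otimes (y_j+\partial_j)m.
\]
This is, up to the global sign, the Koszul complex of the $g$ pairwise commuting $\strucsheaf_A[y]$-linear endomorphisms $u_j = y_j+\partial_j$ of the module $\mathscr{F}[y]$.

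The key step is to show that $u_1,\dots,u_g$ form a regular sequence on $\mathscr{F}[y]$ and to identify the quotient. I will do this by changing variables. There is an $\strucsheaf_A$-algebra automorphism of $\operatorname{Sym}T_A[y]$ sending $y_j\mapsto y_j+\partial_j$ and fixing $\partial_j$; it is invertible because its inverse is $y_j\mapsto y_j-\partial_j$. Transporting the module structure along this automorphism, $\mathscr{F}[y]$ becomes $\mathscr{F}\otimes_{\C}\C[y]$, on which $u_j$ acts as multiplication by the free variable $y_j$. In this form the sequence is clearly regular, since $\mathscr{F}[y]$ is a polynomial ring over $\mathscr{F}$ in the $y_j$.

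Consequently the Koszul complex is exact except in the top degree, and its top cohomology is
\[
\Omega^g_A\otimes \mathscr{F}[y]/(y_j+\partial_j)_j .
\]
This quotient is isomorphic to $\mathscr{F}$, viewed as a module over $\strucsheaf_A[y]$ with $y_j$ acting by $-\partial_j$. That is exactly the pullback $(\id\times\iota)^*\tilde{\mathscr{F}}$ along $y\mapsto -y$, and it appears tensored with $p_1^*\Omega^g_A$.

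It remains to check that the isomorphisms glue. The frame $dz_j$ and coordinates $y_j$ are global, so the identification is canonical and requires no gluing, and everything is $\strucsheaf_A$-linear. The main point needing care is bookkeeping rather than substance. First, one must keep the sign conventions straight, both the $(-1)^g$ and the direction of the Koszul complex: it is cohomological here, ending at the $\wedge^g$ term, so the surviving cohomology is the cokernel. Second, one must confirm that exactness at the other terms follows from regularity for this orientation of the Koszul complex, which is the standard self-duality of Koszul complexes.
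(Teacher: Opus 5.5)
Your proof is correct. The paper gives no argument of its own for this lemma; it quotes Schnell's Lemma 13.3. Your argument is the natural Koszul-complex proof: you identify the complex with the Koszul complex of the commuting operators $y_j+\partial_j$ on $p_1^*\mathscr{F}=\mathscr{F}[y]$. You obtain regularity from the $\mathscr{O}_A$-linear isomorphism $f\,y^\alpha\mapsto (y+\partial)^\alpha f$, which intertwines $y_j$ with $y_j+\partial_j$. You then compute the cokernel $\mathscr{F}[y]/(y_j+\partial_j)\cong\mathscr{F}$, with $y_j$ acting as $-\partial_j$, and this correctly produces $p_1^*\Omega^g_A\otimes(\mathrm{id}\times\iota)^*\tilde{\mathscr{F}}$.
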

Recall that $\fmd_A(\M) \defeq \mathbf{R}\left(p_{23}\right)_*\left(p_{12}^* P \otimes p_1^* \omega_A \otimes p_{13}^*(\id_A \times \iota_V)^* \operatorname{gr}^F \mathcal{M}\right)$, as a consequence of the lemma and base change, 
    if we restrict $\fmd_A(\M)$ to a point $(L,\theta)\in \hat{A}\times V$ which represents a Higgs line bundle with Chern class zero, we get an object that can be represented by the cohomology of the Dolbeault complex $\operatorname{Dol}(\M,L,\theta)$: 
    $$\left[\left(\operatorname{gr}_{\bullet}^F \mathcal{M}\otimes L\right) \rightarrow \left(\Omega_X^1 \otimes_{\mathscr{O}_X} \operatorname{gr}_{\bullet}^F \mathcal{M}\otimes L\right) \rightarrow \cdots \rightarrow \left(\Omega_X^n \otimes_{\mathscr{O}_X} \operatorname{gr}_{\bullet}^F \mathcal{M}\otimes L\right)\right][g].$$
As a corollary to our main result \theoremref{thm: main theorem}, we have the following reconstruction property of the Fourier transform on the Dolbeault moduli space.
\begin{corollary}\label{cor: reconstruction theorem}
Let $\mathcal{M}$ be a simple Hodge module on $A$,
and suppose that the support of $F = \fmd_A(\mathcal{M})$
is equal to $\hat{A}\times V$.
Then $\mathcal{H}^0 F$ is a reflexive coherent sheaf,
locally free on the complement of a finite union of linear
subvarieties of codimension $\geq 4$; and $F$ is uniquely
determined by the restriction of $\mathcal{H}^0 F$
to that open subset. More precisely, $\fmd_A(\M)$ can be reconstructed, up to isomorphism, by applying the functor
$$\tau_{\leq \ell-1}
\circ \mathbf{R}\mathcal{H}om(-,\mathcal{O})
\circ \cdots \circ \tau_{\leq 2}
\circ \mathbf{R}\mathcal{H}om(-,\mathcal{O})
\circ \tau_{\leq 1}
\circ \mathbf{R}\mathcal{H}om(-,\mathcal{O})
\circ j_*$$
to the locally free sheaf $j^*\mathcal{H}^0\mathrm{FM}_A(\mathcal{M})$ where $j:U\hookrightarrow \hat{A}\times V$ is the inclusion of the maximal open set where $\mathcal{H}^0F$ is locally free;
here $\ell$ is any odd integer with $\ell>\dim A$.

\end{corollary}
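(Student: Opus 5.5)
This is the Dolbeault analogue of Schnell's reconstruction theorem for $\fm_A$ of simple holonomic modules \cite[Theorem 2.3 / Section 18]{holonomicDmodules}, and I would transplant that proof. The theorem reduces to purely commutative-algebra facts about $F=\fmd_A(\M)$, once one knows the dimensions of the supports of the cohomology sheaves of $F$ and of its dual $\mathbf{R}\sheafhom(F,\strucsheaf)$.

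\emph{Codimension bounds.} By base change and Lemma 13.3 of \cite{holonomicDmodules}, the fibre of $F$ at $(L,\theta)$ computes $H^\bullet(A,\operatorname{Dol}(\M,L,\theta))$. So $\supp \mathcal{H}^k F\subset S^k(\gr^F\M)$, and more generally the loci where the derived fibres are nonzero in degree $k$ are the $S^k(\gr^F\M)$. By \theoremref{thm: main theorem} these loci equal $\operatorname{NAH}(S^k(\M))$; they are linear and have the same dimensions as $S^k(\M)$, because $\operatorname{NAH}$ carries linear subvarieties of $A^\natural$ to linear subvarieties of the same dimension. Schnell's results for a simple holonomic $\M$ whose transform has full support then transfer verbatim:
\begin{itemize}
\item $\codim S^k(\M)\ge 2k+2$ for $k\ge 1$, using simplicity to exclude components where $\M$ is a twisted pullback;
\item the same bounds hold for the dual $\mathbb{D}\M$, which is again a simple Hodge module.
\end{itemize}
For the dual, I will use Grothendieck duality on $p_{23}$ together with Saito's compatibility of $\gr^F$ with duality \cite[Lemma 8.4]{Saitovanishing}. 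This identifies $\mathbf{R}\sheafhom(F,\strucsheaf)$ with $(\id\times\iota)^*\fmd_A(\mathbb{D}\M)$ up to shift, which is the analogue of \cite[Prop.\ on duality]{holonomicDmodules}.

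\emph{Reflexivity and reconstruction.} Given these bounds, $F$ and its dual satisfy the hypotheses of Schnell's commutative-algebra lemma, which is a version of the Auslander–Buchsbaum / Serre $S_2$ criterion on the smooth variety $\hat{A}\times V$. That lemma says that if $\codim\supp\mathcal{H}^kF\ge 2k+2$ for $k>0$, and the analogous bound holds for the dual, then $\mathcal{H}^0F$ is reflexive. Its non-locally-free locus is contained in $\bigcup_{k\ge1}\supp \mathcal{H}^k(\mathbf{R}\sheafhom(F,\strucsheaf))$ together with the $k\ge 1$ loci of $F$, hence has codimension $\ge 4$. It is linear by \theoremref{thm: main theorem}, which gives the statement about a finite union of linear subvarieties.

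The iterated formula is proved by descending induction on truncations. We have $j_*j^*\mathcal{H}^0F=\mathcal{H}^0F=\tau_{\le0}F$ by reflexivity. We then reconstruct $\tau_{\le 1}$ of the dual, then $\tau_{\le 2}$ of $F$, and so on. At each step the codimension bounds guarantee that $\mathcal{E}xt$ sheaves of the truncated object agree with those of the full object in the relevant range. The alternation stops once the index exceeds $\dim A$, because the loci $S^k$ are empty for $k>\dim A$; this is why $\ell$ is odd and $\ell>\dim A$.

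\emph{Main obstacle.} The step I expect to be hardest is the codimension bound for $S^k(\M)$ for simple $\M$ with full support, specifically the strict inequality $2k+2$ rather than $2k$. I would take it directly from Schnell's treatment of simple holonomic modules, via the linearity of \theoremref{thm:schnelllinearity} and the fact that a component of $S^k$ of small codimension forces $\M$ to be pulled back from a quotient. I would then transport it through \theoremref{thm: main theorem}. I also need to check carefully that the duality identification for $\fmd_A$ holds with the involution $\iota_V$ and the correct shift; I expect this to be routine.
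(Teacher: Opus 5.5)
Your proposal is correct and follows the paper's own route. The paper's proof simply says that, once \theoremref{thm: main theorem} transports the de Rham support loci to the Dolbeault side via $\operatorname{NAH}$ (so both Schnell's bound $\codim S^k\ge 2k+2$ for $k\ge 1$ and the full-support hypothesis carry over), Schnell's proof of \cite[Proposition 22.1]{holonomicDmodules} goes through verbatim, which is exactly the transplant you describe. The duality you planned to check is already \theoremref{thm:functoriality}(b): $\mathbf{R}\sheafhom(F,\strucsheaf)\simeq\langle -1_{\hat{A}\times V}\rangle^*\fmd_A(\mathbf{D}_A\M)$, with the involution acting on all of $\hat{A}\times V$ and, importantly for your degree-indexed codimension estimates on the dual, with no shift.
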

\begin{proof}
    With our main theorem, the same proof as in \cite[Proposition 22.1]{holonomicDmodules} gives the corollary in our setting.
\end{proof}

The formula above looks like the one for intersection complex of a local system, but in a very different setting. It will be very interesting to have a theory of intersection complex of coherent sheaves to explain this similarity \cite[section 6]{holonomicDmodules}.

Now we give an idea about the extended Fourier-Mukai transform by \cite{holonomicDmodules}, state the result, and leave the precise definition to section 6. It should be viewed as a complex of sheaves interpolating the two Fourier-Mukai transforms that we have defined above.

On the dual abelian variety $\hat{A}=\operatorname{Pic}{ }^0(A)$, there is a canonical extension of vector bundles
$$
0 \rightarrow \hat{A} \times H^0\left(A, \Omega_A^1\right) \rightarrow E(A) \rightarrow \hat{A} \times \mathbb{C} \rightarrow 0
$$
whose extension class in
$$
\begin{aligned}
\operatorname{Ext}^1\left(\mathscr{O}_{\hat{A}}, \mathscr{O}_{\hat{A}} \times H^0\left(A, \Omega_A^1\right)\right) & \simeq H^1\left(\hat{A}, \mathscr{O}_{\hat{A}}\right) \otimes H^0\left(A, \Omega_A^1\right) \\
& \simeq H^0\left(A, \mathscr{T}_A\right) \otimes H^0\left(A, \Omega_A^1\right)
\end{aligned}
$$
is represented by $\sum_j \partial_j \otimes d z_j$. Then $A^{\natural}$ is isomorphic to the preimage of $\hat{A} \times\{1\}$ inside of $E(A)$, and the projection
$$
\pi: A^{\natural} \rightarrow \hat{A}, \quad(L, \nabla) \mapsto L,
$$
is a torsor for the trivial bundle $\hat{A} \times H^0\left(A, \Omega_A^1\right)$.

\begin{definition}
    Let $X$ be a complex manifold, and $\lambda: X \rightarrow \mathbb{C}$ a holomorphic function. A $\lambda$-connection, on a sheaf of $\mathscr{O}_X$-modules $\mathscr{E}$ is a $\mathbb{C}$-linear morphism of sheaves
$$
\nabla: \mathscr{E} \rightarrow \Omega_X^1 \otimes_{\mathscr{O}_X} \mathscr{E}
$$
that satisfies the Leibniz rule with parameter $\lambda$, i.e.,
$$
\nabla(f \cdot s)=f \cdot \nabla s+d f \otimes \lambda s
$$
for local sections $f \in \Gamma\left(U, \mathscr{O}_X\right)$ and $s \in \Gamma(U, \mathscr{E})$. A $\lambda$-connection is called integrable if its curvature $\nabla \circ \nabla: \mathscr{E} \rightarrow \Omega_X^2 \otimes_{\mathscr{O}_X} \mathscr{E}$ is equal to zero.
\end{definition}

\begin{example}
    A $1$-connection is a connection in the usual sense; a $0$-connection is a Higgs sheaf.
\end{example}

Let $\lambda:E(A)\rightarrow \hat{A}\times \C\rightarrow \C=\mathbb{A}^1$ be the composition. Then $\hat{A}\times V=\lambda^{-1}(0)$, and $A^\natural=\lambda^{-1}(1)$. We have defined Fourier-Mukai transforms of mixed Hodge modules on both spaces, and there's a way \cite[section 12]{holonomicDmodules} to define an extended Fourier-Mukai transform $\tfm(\M)$ on $E(A)$, whose restriction to each fiber are the previous transforms.

\begin{theorem} \label{thm: flatness}
    The cohomology sheaf $\mathcal{H}^k(\tfm(\M^\bullet))$ is flat over $\mathbb{A}^1_{\lambda}$ for any $\M^\bullet\in \operatorname{D}^b_{\MHM}(A)$ and any $k\in \Z$.
\end{theorem}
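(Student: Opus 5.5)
Flatness over $\mathbb{A}^1_\lambda$ for a coherent sheaf on $E(A)$ amounts to the absence of $\lambda$-torsion. So the plan is to show that $\lambda\colon \mathcal{H}^k(\tfm(\M^\bullet))\to\mathcal{H}^k(\tfm(\M^\bullet))$ is injective for every $k$. We work with the $\C^*$-action on $E(A)$ that scales $\lambda$, under which $\tfm(\M^\bullet)$ is equivariant; this comes from the Rees module of $(\M,F)$ being graded. Equivariance gives $\tfm(\M^\bullet)|_{\lambda\neq 0}\simeq \fm_A(\M^\bullet)\boxtimes \C[\lambda,\lambda^{-1}]$ after the identification $\lambda^{-1}(\C^*)\simeq A^\natural\times\C^*$. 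Hence any $\lambda$-torsion in $\mathcal{H}^k$ is supported on the central fiber $\lambda^{-1}(0)=\hat{A}\times V$.

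The triangle $\tfm\xrightarrow{\lambda}\tfm\to \tfm\otimes^{\mathbf L}\mathcal{O}_{\lambda=0}$ produces long exact sequences. The derived restriction to $\lambda=0$ is $\fmd_A(\M^\bullet)$ and the derived restriction to $\lambda=1$ is $\fm_A(\M^\bullet)$, both by base change as in section 6. Flatness of all $\mathcal{H}^k$ is therefore equivalent to the sequences
$$0\to \mathcal{H}^k(\tfm)\otimes \mathcal{O}_{\lambda=0}\to \mathcal{H}^k(\fmd_A(\M^\bullet))\to \mathcal{H}^{k+1}(\tfm)[\lambda]\to 0$$
having vanishing $\lambda$-torsion terms. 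To obtain this, we check pointwise, at closed points of the central fiber, that cohomology does not jump. Concretely, we show for every point $(L,\theta)\in\hat{A}\times V$ and every $k,m$ that
$$\dim H^k\bigl(\tfm(\M^\bullet)\otimes^{\mathbf L} k(L,\theta)\bigr) = \dim H^k\bigl(A,\operatorname{DR}(\M^\bullet\otimes(L',\nabla'))\bigr),$$
where $(L',\nabla')$ is a suitable point on a generic fiber. The main theorem, in the form of \corollaryref{cor:equality of dimension}, says the fiber dimension of $\fmd_A$ at $(L,\theta)$ equals the de Rham cohomology dimension at $\operatorname{NAH}^{-1}(L,\theta)$. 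Combined with $\C^*$-equivariance, this shows that along the orbit closure (the $\lambda$-connection line through the corresponding point), the fiber cohomology dimensions of the derived restrictions are constant in $\lambda$, including at $\lambda=0$. Equivalently, the cohomology support loci of $\tfm$ on the total space are the cones over those on $A^\natural$, with the closure computed by the main theorem. The twistor-line deformation here is the one used in proving the main theorem, so the matching points are $(L,\lambda\nabla)$ degenerating to $(L,\theta)$.

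A standard semicontinuity argument then gives flatness: upper semicontinuity along a one-parameter family with constant fiber cohomology dimensions in all degrees forces each $\mathcal{H}^k$ to be flat over that curve. We would apply this by descending induction on $k$, starting from the top degree, where the right-exactness of $\otimes\mathcal{O}_{\lambda=0}$ handles the base case, and using the long exact sequence above together with the equality of fiber dimensions to kill the torsion at each step. To pass from curves to the whole space, we use that every point of $\lambda^{-1}(0)$ lies on such a $\C^*$-orbit curve $\lambda\mapsto(L,\lambda$-connection$)$. Since torsion would be a coherent subsheaf supported on $\lambda=0$, it is detected by restriction to these curves, using the local criterion of flatness at each closed point.

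The main obstacle is the precise identification of the fiber of $\tfm$ over a point of the central fiber with the limit of de Rham cohomologies along the $\C^*$-orbit. This requires knowing that the specialization from $A^\natural$ to $\hat{A}\times V$ through $E(A)$ is compatible with the non-abelian Hodge map on supports, so that the dimension equality from the main theorem really compares the right points. I would first try to handle this using the linearity of all loci (\theoremref{thm:schnelllinearity} and the main theorem), checking that a linear subvariety of $A^\natural$ and its $\operatorname{NAH}$-image are exactly the two ends of the corresponding linear subvariety of $E(A)$.
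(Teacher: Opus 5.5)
There is a genuine gap, and it sits in two places.

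\textbf{The orbit-curve identification is false.} The $\C^*$-action on $E(A)$ is fibrewise scaling of the bundle $E(A)\to\hat{A}$. So the orbit of $(L,\nabla)\in A^\natural$ consists of the $t$-connections $t\nabla$ on the \emph{same} line bundle $L$. Its closure meets $\lambda=0$ at $(L,0)$, not at $\operatorname{NAH}(L,\nabla)$, which in general has a different line bundle and a nonzero Higgs field.

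Two consequences follow. Points $(K,\theta)$ with $\theta\neq 0$ lie on no orbit curve coming from $\lambda\neq 0$. And fibre dimensions along orbit closures do jump at $\lambda=0$. For instance, $\tfm(\strucsheaf_A)$ is supported on the orbit closure of the trivial connection. Along the orbit of $(\strucsheaf_A,d+\omega)$ with $\omega\neq 0$, the fibres therefore vanish for $\lambda\neq 0$ but not at $(\strucsheaf_A,0)$, even though $\tfm(\strucsheaf_A)$ is flat.

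Simpson's twistor lines do join $(L,\nabla)$ to its $\operatorname{NAH}$-image, but they are transcendental sections, not algebraic curves and not $\C^*$-orbits. What the main theorem does give, as sets, is what your last paragraph suggests. Each jump locus of $\tfm(\M^\bullet)$ in $E(A)$ is the closure of its part over $\lambda\neq 0$, namely a union of translates $t_L^*f^\natural(E(B))$ whose $\lambda=0$ fibre is $(L,0)+\hat{B}\times H^0(B,\Omega^1_B)$.

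\textbf{That set-theoretic statement does not yield flatness by semicontinuity.} Constant fibre dimensions along a curve $C$ transverse to $\lambda=0$ only show that $F|^{\mathbf L}_C$ has locally free cohomology. This does not detect $\lambda$-torsion in $\mathcal{H}^k(F)$. Locally, take $F=[\strucsheaf\xrightarrow{\lambda g}\strucsheaf]$ with $g|_C=0$: the restriction to $C$ has zero differential, yet $\mathcal{H}^1(F)=\strucsheaf/(\lambda g)$ has $\lambda$-torsion. So your claim that torsion is detected by restriction to these curves fails.

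More fundamentally, $\mathcal{H}^k(\fmd)$ receives contributions both from $\mathcal{H}^k(\tfm)/\lambda$ and from $\mathcal{H}^{k+1}(\tfm)[\lambda]$. Fibre cohomology of a complex is not computed from its cohomology sheaves, so torsion can hide inside a jump locus already forced by the $\lambda\neq 0$ part. Your descending induction has no mechanism to separate these contributions.

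The paper supplies that mechanism:
\begin{enumerate}
\item \theoremref{thm:truncation} reduces to $\tfm(\M^\bullet)=\mathcal{H}^1[-1]$.
\item \propositionref{prop:lambdatorsion} (via $\mathcal{H}^{-1}\fmd(\M)=0$ for a single module) makes the pieces $\mathcal{H}^0\tfm(\mathcal{H}^1\mathcal{N})$ torsion-free.
\item The triangles $\M'\to\M^\bullet\to f^+\mathcal{N}\otimes V_\chi$ lower the multiplicity of $\mathcal{H}^1$ along each component of its support, while keeping $\ker\lambda=\mathcal{H}^0\fmd$ fixed.
\end{enumerate}
A pointwise statement (the main theorem, or just $E_1$-degeneration plus $\C^*$-invariance) is used only at the very end. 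There it is applied to a complex whose transform vanishes over $\lambda\neq 0$ but whose Dolbeault transform would be nonzero, which gives the contradiction.
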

That is to say, for mixed Hodge modules equipped with its Hodge filtration, the extended Fourier-Mukai transform behaves nicely over the twistor line $\mathbb{A}^1_\lambda$. This theorem will follow from \theoremref{thm: main theorem}, with some extra work and constructions inspired by \cite{holonomicDmodules}.

\textbf{Outline of the paper.} In section 2, we collect and review some results of mixed Hodge modules. In section 3, we establish basic functoriality of Fourier transform on Dolbeault moduli space. In section 4, we show that the cohomology support loci of mixed Hodge modules are unitary translates of linear subvarieties. We prove the main result \theoremref{thm: main theorem} in section 5. Section 6 and 7 are devoted to the study of the extended Fourier transform. We apply the results in section 8 to give other realizations of the Tannaka group of Hodge modules following \cite{microlocalI}, in particular, Tannaka group of a Hodge module is the minimal reductive structural group of its projectivized total Fourier transform on $\proj(E(A))$. In section 9, we give a simple proof of the propagation property of cohomology support loci needed in our proofs.

\section*{Acknowledgment}

I thank my advisor Christian Schnell for his constant support. This project can't be completed without discussions with him, especially for suggesting the statement of \theoremref{thm:rational translates} and suggesting working on $\proj(E(A))$. His earlier works are also crucial inputs for this paper.

No mathematical statements and proofs in this paper are involved with AI tools.

\section{Review of some results about mixed Hodge modules}
In this section we collect and review some results of mixed Hodge modules that we are going to use in this paper. The first one is about the dual of Hodge modules. Recall that, by the work of Saito, the associated graded of the Verdier dual $\M'=\mathbf{D}_A(\M)$
is given by the following:
\begin{theorem}[see also Theorem 6.4 of \cite{popaschnell}]
    $$\mathbf{R} \mathcal{H}om{ }_{\mathcal{A}_X^{\bullet}}\left(\operatorname{gr}_{\bullet}^F \mathcal{M}, \mathcal{A}_X^{\bullet}[\dim X]\right) \simeq \omega_X \otimes_{\strucsheaf_X} \operatorname{gr}_{\bullet+2 \dim X}^F \mathcal{M}^{\prime},$$
where sections of $\operatorname{Sym}^k T_X$ act with an extra factor of $(-1)^k$ on the right-hand side. If we consider both sides as coherent sheaves on $T^* X$, we obtain
$$
\mathbf{R} \mathcal{H}om\left(\gr^F\M, \strucsheaf_{T^* X}[\dim X]\right) \simeq p^* \omega_X \otimes(-1)_{T^* X}^* \gr^F\M',
$$
where $p: T^* X \rightarrow X$ is the projection. In particular, $\gr^F\M$ is always a Cohen-Macaulay sheaf of dimension equals $\dim X$.
\end{theorem}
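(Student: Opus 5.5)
The plan is to deduce the statement from Saito's construction of the dual of a filtered $\D$-module, passing through Rees modules so that taking associated graded becomes reduction modulo a parameter. Let $n=\dim X$. Let $R_F\D_X=\bigoplus_k F_k\D_X\, z^k$ and $R_F\M=\bigoplus_k F_k\M\, z^k$ be the Rees objects. Then $R_F\D_X/zR_F\D_X\simeq\gr^F_\bullet\D_X=\A_X^\bullet=\operatorname{Sym}T_X$ and $R_F\M/zR_F\M\simeq\gr^F_\bullet\M$.

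\textbf{Step 1: the Rees dual is strict.} Following Saito, the filtered dual is computed by
$$\mathbf{R}\sheafhom_{R_F\D_X}\bigl(R_F\M,\,R_F\D_X\otimes\omega_X^{-1}[n]\bigr),$$
with the appropriate shift of the $z$-grading, followed by the side-changing operation to get back a left module. Saito's theorem that $\mathbf{D}_X$ preserves mixed Hodge modules, with $\M'=\mathbf{D}_X\M$ carrying the dual Hodge filtration, says exactly the following. This complex is strict: it has cohomology only in degree $0$, that cohomology is $z$-torsion free, and it equals $R_F\M'$ with the grading shifted so that $F_k$ corresponds to $F_{k+2n}$. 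The $2n$ comes from the Tate-twist normalisation $-n$ of the dual combined with the shift from $\omega_X$ and the side change.

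I expect Step 1 to be the main obstacle. Strictness is not formal: it is where the Hodge-module hypothesis enters, through Saito's inductive proof via nearby and vanishing cycles and the $E_1$-degeneration. I would simply quote it from \cite{SaitoMHM}, \cite[Lemma 8.4]{Saitovanishing} rather than reprove it.

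\textbf{Step 2: reduce modulo $z$.} I would take a finite resolution of $R_F\M$ by locally free graded $R_F\D_X$-modules of the form $R_F\D_X\otimes\mathcal{E}$, which exists because $\M$ is coherent with a good filtration. Then $\mathbf{R}\sheafhom$ commutes with $-\otimes^{\mathbf{L}}_{\C[z]}\C[z]/(z)$. Strictness gives that the derived reduction of the dual is just $R_F\M'/z\simeq\gr^F_{\bullet+2n}\M'$, up to the factor $\omega_X$. The left-hand side becomes $\mathbf{R}\sheafhom_{\A_X^\bullet}(\gr^F_\bullet\M,\A_X^\bullet[n])$. The side-changing operation uses the anti-automorphism of $\D_X$ with $\partial_j\mapsto-\partial_j$ for local coordinates. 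On $\gr^F\D_X$ it therefore acts on $\operatorname{Sym}^kT_X$ by $(-1)^k$, which accounts for the sign in the first isomorphism.

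\textbf{Step 3: pass to $T^*X$ and deduce Cohen--Macaulayness.} The projection $p:T^*X\to X$ is affine and $p_*\strucsheaf_{T^*X}=\operatorname{Sym}T_X$. So $\mathbf{R}\sheafhom_{\A_X}(-,\A_X)$ of a graded module corresponds to $\mathbf{R}\sheafhom_{\strucsheaf_{T^*X}}(-,\strucsheaf_{T^*X})$ of the associated sheaf; the grading shift is invisible at the level of sheaves. The factor $\omega_X$ becomes $p^*\omega_X$, and the sign twist by $(-1)^k$ becomes pullback by $(-1)_{T^*X}$. This gives the second isomorphism.

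Finally, $T^*X$ is smooth of dimension $2n$, and $\sheafhom^i(\gr^F\M,\strucsheaf_{T^*X})$ vanishes for $i\neq n$ by the second isomorphism. By the local characterisation of Cohen--Macaulay modules over a regular local ring through Ext against the ring, $\gr^F\M$ is therefore Cohen--Macaulay of codimension $n$, i.e. of dimension $n=\dim X$.
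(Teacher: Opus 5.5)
Your proposal is correct and follows the route the paper relies on. The paper gives no proof of its own; it quotes Saito's duality theory (via Popa--Schnell, Theorem 6.4), whose real content is the strictness of the filtered dual that you isolate and cite in Step 1. Your remaining steps are the standard formal ones: the reduction of the Rees dual modulo $z$ (valid because $R_F\M$ and the dual Rees module are $z$-torsion free), the sign $(-1)^k$ coming from the transpose $\partial_j\mapsto-\partial_j$, the passage to $T^*X$ through the affine projection $p$, and Cohen--Macaulayness from the vanishing of $\mathcal{E}xt^i(\gr^F\M,\strucsheaf_{T^*X})$ for $i\neq\dim X$.
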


The second result concerns the Hodge filtration on direct images of Hodge modules. When $\M$ underlies a pure Hodge module, by the decomposition theorem of Saito \cite{Saito1988}, $f_+ \M \simeq \bigoplus_i \mathcal{H}^i(f_+ \M)[-i]$, and each cohomology module is also a pure Hodge module with the induced Hodge filtration. Since taking $\gr^F_\bullet$ is an exact functor for Hodge modules and direct images are strict, we know $$\gr^F_\bullet(f_+ \M) \simeq  \bigoplus_i \gr^F_{\bullet}\mathcal{H}^i(f_+ \M)[-i] \in \mathrm{D}^b_{\operatorname{coh}}(\A_X).$$ 

\begin{theorem}[Laumon \cite{LaumonsformulafilteredDmod}, see also Theorem 6.5 of \cite{popaschnell}]
    Let $f: X \rightarrow Y$ be a projective morphism between smooth complex algebraic varieties, and let $\M \in \operatorname{MHM}(X)$. Then with notation as above,
    $$ \operatorname{gr}_{\bullet}^F f_{+}(\mathcal{M}, F) \simeq \mathbf{R} f_*\left(\omega_{X / Y} \otimes_{\strucsheaf_X} \operatorname{gr}_{\bullet+\operatorname{dim} X-\operatorname{dim} Y}^F \mathcal{M} \otimes_{\mathcal{A}_X} f^* \mathcal{A}_Y\right) .$$
\end{theorem}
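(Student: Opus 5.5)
The plan is to reduce the statement to Saito's strictness theorem and a local computation with the graded transfer module. Recall that Saito defines the filtered direct image of a right filtered $\D$-module as
$$f_+(\M,F)=\mathbf{R}f_*\bigl((\M,F)\otimes^{\mathbf L}_{\D_X}(\D_{X\to Y},F)\bigr),$$
where $\D_{X\to Y}=\strucsheaf_X\otimes_{f^{-1}\strucsheaf_Y}f^{-1}\D_Y$ carries the filtration induced by the order filtration on $\D_Y$. Left modules are converted to right modules by tensoring with $\omega_X$ and $\omega_Y$. This conversion is where the factor $\omega_{X/Y}$ and the index shift $\dim X-\dim Y$ come from.

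First I will represent the derived tensor product by an explicit filtered complex. I will use the filtered Spencer resolution
$$\M\otimes_{\strucsheaf_X}\wedge^{\bullet}T_X\otimes_{\strucsheaf_X}\D_X\to\M,$$
which is filtered quasi-isomorphic to $\M$ and consists of induced modules. This gives a filtered complex of $f^{-1}\D_Y$-modules
$$\M\otimes\wedge^{-\bullet}T_X\otimes_{\strucsheaf_X}\D_{X\to Y},$$
the relative de Rham complex of $\M$ twisted by $\D_{X\to Y}$. Taking $\gr^F$ term by term is harmless here, because the terms are filtered-induced and the filtrations are exhaustive and bounded below. Using $\gr^F\D_{X\to Y}=\strucsheaf_X\otimes f^{-1}\operatorname{Sym}T_Y=f^*\A_Y$, the associated graded complex becomes the Koszul complex
$$\gr^F_\bullet\M\otimes\wedge^{-\bullet}T_X\otimes f^*\A_Y .$$
Here $T_X$ acts on $\gr^F\M$ through the $\A_X$-module structure and on $f^*\A_Y$ through $df\colon T_X\to f^*T_Y$. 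Since $\wedge^\bullet T_X\otimes\A_X$ is the Koszul resolution of $\strucsheaf_X$ over $\A_X=\operatorname{Sym}T_X$, this complex computes $\gr^F_\bullet\M\otimes^{\mathbf L}_{\A_X}f^*\A_Y$. After translating back to left modules, with the degree bookkeeping giving the shift by $\dim X-\dim Y$, we obtain
$$\gr^F\bigl(\M\otimes^{\mathbf L}_{\D_X}\D_{X\to Y}\bigr)\simeq\omega_{X/Y}\otimes\gr^F_{\bullet+\dim X-\dim Y}\M\otimes^{\mathbf L}_{\A_X}f^*\A_Y$$
in the derived category of graded $f^{-1}\A_Y$-modules.

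Next I apply $\mathbf{R}f_*$. Since $\gr^F$ is a functor on filtered complexes, we have
$$\gr^F\,\mathbf{R}f_*(K,F)=\mathbf{R}f_*\gr^F(K,F)$$
at the level of filtered complexes; this uses that the filtrations are bounded below, so one can work with Godement or Čech resolutions compatibly with the filtration. The remaining issue is that $\gr^F_\bullet f_+(\M,F)$ should mean the associated graded of the cohomology modules $\mathcal{H}^i f_+\M$ with their induced Hodge filtrations, not the graded pieces of some filtered complex. These two agree exactly when the filtered complex $f_+(\M,F)$ is strict, meaning each cohomology sheaf of $F_p$ injects into the cohomology of the whole complex.

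This strictness is the main obstacle, and I will not attempt to prove it by hand. It is Saito's theorem that for $f$ projective and $\M\in\MHM(X)$, the filtered direct image $f_+(\M,F)$ is strict and each $\mathcal{H}^if_+\M$ underlies a mixed Hodge module. In the pure case this comes from the decomposition theorem already recalled above; for mixed modules one uses the weight filtration and the fact that each $\gr^W$ is pure. Granting strictness, $\gr^F_\bullet$ commutes with taking cohomology, and combining it with the previous two steps gives the stated isomorphism. The remaining care is only in matching the conventions for signs, shifts, and the left–right conversion.
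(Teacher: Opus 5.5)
Your argument is correct and is the standard proof behind this statement. The paper gives no proof of its own and only cites Laumon and Popa--Schnell, where the formula is exactly your two steps. First, Laumon's computation shows that $\gr^F$ commutes with the filtered direct image, via induced (Spencer) resolutions and $\gr^F\D_{X\to Y}=f^*\A_Y$. Second, Saito's strictness theorem lets you pass to the cohomology modules $\mathcal{H}^if_+\M$.

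Two small corrections. The reason the graded Spencer complex computes $\gr^F\M\otimes^{\mathbf L}_{\A_X}f^*\A_Y$ is not that $\wedge^\bullet T_X\otimes\A_X$ resolves $\strucsheaf_X$. The right reason is that the complex resolves $\gr^F\M$ by modules of the form $N\otimes_{\strucsheaf_X}\A_X$. These are acyclic for $-\otimes_{\A_X}f^*\A_Y$ because $f^*\A_Y$ is locally free over $\strucsheaf_X$.

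Also, strictness should simply be cited from Saito. It does not ``come from'' the decomposition theorem recalled in Section 2, because that recalled statement already assumes strictness.
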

By Laumon's formula, 
$$\operatorname{gr}_{\bullet}^F\left(\mathcal{H}^i f_{+} \mathcal{M}\right) \simeq R^i f_*\left(\omega_{X/Y}\otimes_{\strucsheaf_X} \operatorname{gr}_{\bullet+\dim X-\dim Y}^F \mathcal{M} \stackrel{\mathbf{L}}{\otimes}_{\A_X} f^* \A_Y\right).$$


For Hodge modules, their graded de Rham complexes satisfy also the decomposition theorem. The first isomorphism in the next theorem is because of strictness of direct images, the second isomorphism in the theorem is due to the decomposition theorem of Saito, see also \cite{MHMproject} for the decomposition theorem of pure complex Hodge modules.
\begin{theorem}[Direct image of graded de Rham complexes \cite{Saito1988}]
    Let $f: X \rightarrow Y$ be a projective morphism between complex manifolds. If $(\mathcal{M}, F_{\bullet} \mathcal{M})$ underlies a pure Hodge module on $X$, one has
$$
\mathbf{R} f_* \operatorname{gr}_p^F \operatorname{DR}(\mathcal{M}) \simeq \operatorname{gr}_p^F \operatorname{DR}\left(f_{+} \mathcal{M}\right) \simeq \bigoplus_{i \in \mathbb{Z}} \operatorname{gr}_p^F \operatorname{DR}\left(\mathcal{H}^i f_{+} \mathcal{M}\right)[-i]
$$
for every $p \in \mathbb{Z}$.

\end{theorem}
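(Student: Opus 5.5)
The plan is to work throughout in the filtered derived category of $\D$-modules, following Saito's construction of the direct image functor. Then both isomorphisms become consequences of two structural facts about pure Hodge modules under projective morphisms: strictness of $f_+(\M,F)$, and the decomposition theorem.

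\textbf{First isomorphism.} For a filtered left $\D_X$-module $(\M,F)$, Saito defines the filtered direct image as
\[
f_+(\M,F)=\mathbf{R}f_*\bigl(\operatorname{DR}_{X/Y}\text{-type complex } (\M\otimes_{\D_X}\D_{X\to Y},F)\bigr),
\]
via the induced filtration on the transfer module.
\begin{enumerate}
\item I would first record the filtered projection formula
\[
\operatorname{DR}_Y\bigl(f_+(\M,F)\bigr)\simeq \mathbf{R}f_*\operatorname{DR}_X(\M,F)
\]
in the filtered derived category, with the shifts in $F$ matching the convention of the statement. This is a formal computation: take a filtered resolution of $\D_{X\to Y}$ by the relative Spencer complex and use
\[
\operatorname{DR}_Y(\D_Y)\simeq\omega_Y \quad\text{and}\quad \operatorname{DR}_X(\D_{X\to Y})\simeq f^{-1}\omega_Y \otimes \omega_X^{-1}\otimes\omega_X,
\]
both filtered.
\item Next I take $\operatorname{gr}^F_p$ of both sides. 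On the right, $\operatorname{gr}^F_p$ commutes with $\mathbf{R}f_*$ because $\mathbf{R}f_*$ is applied termwise to filtered complexes of $\strucsheaf$-coherent pieces. On the left, $\operatorname{gr}^F_p\operatorname{DR}_Y$ of the filtered complex $f_+(\M,F)$ agrees with $\operatorname{gr}^F_p\operatorname{DR}_Y$ of the filtered $\D_Y$-modules only if the complex $f_+(\M,F)$ is strict.
\item Strictness is exactly Saito's stability theorem for pure Hodge modules under projective morphisms. Under strictness, $\mathcal{H}^i f_+(\M,F)$ carries the induced filtration and the cohomology of $\operatorname{gr}^F$ equals $\operatorname{gr}^F$ of the cohomology. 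This gives
\[
\mathbf{R}f_*\operatorname{gr}^F_p\operatorname{DR}(\M)\simeq\operatorname{gr}^F_p\operatorname{DR}(f_+\M).
\]
\end{enumerate}

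\textbf{Second isomorphism.} I would invoke Saito's decomposition theorem in its filtered form: since $\M$ is pure, $f_+(\M,F)\simeq\bigoplus_i \mathcal{H}^if_+(\M,F)[-i]$ in the derived category of filtered $\D_Y$-modules, not merely after forgetting $F$. Over $\C$ this is the version in \cite{MHMproject}. The argument uses the relative hard Lefschetz isomorphism for a relatively ample class together with Deligne's splitting criterion, carried out in the filtered (or $\mathcal{R}$-module) category. Applying the additive functor $\operatorname{gr}^F_p\operatorname{DR}_Y$, which is well defined on strict filtered complexes, to this splitting yields the right-hand side.

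\textbf{Main obstacle.} All the real content is in Saito's theorems: strictness, and the filtered decomposition. In particular, the splitting must be compatible with $F$; this is why purity is needed and why the statement fails for general filtered $\D$-modules. Everything else is bookkeeping of indices, which I expect to check by comparing with Laumon's formula above in the case where $\M$ is a variation of Hodge structure.
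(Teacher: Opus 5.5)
Your proposal is correct and follows the paper's route. The paper attributes the first isomorphism to Saito's strictness of direct images under projective morphisms and the second to Saito's decomposition theorem (citing \cite{MHMproject} for complex Hodge modules), and your outline spells out these two inputs with the filtered compatibility $\operatorname{DR}_Y\circ f_+\simeq\mathbf{R}f_*\circ\operatorname{DR}_X$ as the formal glue; your auxiliary formula for $\operatorname{DR}_X(\D_{X\to Y})$ is garbled, but that compatibility is standard in Saito's formalism, so this is only bookkeeping.
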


\section{Fourier-Mukai transform on the Dolbeault moduli space}

\subsection{Functoriality of the Fourier transform}
Let $\M$ be a mixed Hodge module on an abelian variety $A$ of dimension $g$. Let $V=H^0(A,\Omega_A^1)$ and $\gr^F\M$ be the associated sheaf on $T^*A\simeq A\times V$. We will study the relative Fourier-Mukai transform of $\gr^F \M$ on the Dolbeault moduli space of Higgs line bundles $\hat{A}\times V$, which corresponds to looking at Dolbeault cohomologies of the graded de Rham complexes tensored with Chern class zero line bundles and differentials changed by one-forms. Our main theorem in this section is about the functoriality of our Fourier transforms of mixed Hodge modules under maps of abelian varieties. The non-trivial part is how the graded of the Hodge filtration behave under duality and pushforward, which are answered previously by Saito and Laumon. The statement about duality is used for proving the propagation property \propositionref{prop:propagation}.

Recall that $\hat{A}\times V$ is a group under addition. We denote by $t_{(L, 0)}: \hat{A}\times V \rightarrow \hat{A}\times V$ the translation morphism induced by $L$ with zero Higgs field. We have the following diagram:
\[
\begin{tikzcd}
A \times H^0(A, \Omega^1_A) 
&[3em] A \times \hat{A} \times H^0(A, \Omega^1_A) \arrow[l, "p_{13}"'] \arrow[r, "p_{23}"] \arrow[d, "p_{12}"'] 
&[3em] \hat{A} \times H^0(A, \Omega^1_A) \\
& A \times \hat{A} &
\end{tikzcd}
\]
Then $\fmd_A(\M)$ is defined by the following: $$\fmd_A(\M) \defeq \mathbf{R}\left(p_{23}\right)_*\left(p_{12}^* P \otimes p_1^* \omega_A \otimes p_{13}^*(\id_A \times \iota_V)^* \operatorname{gr}^F \mathcal{M}\right)\in \mathrm{D}_{\operatorname{coh}}^b\left(\mathcal{O}_{\hat{A}\times V}\right),$$
    where $P$ is the Poincaré line bundle on $A\times \hat{A}$, and $\iota_V=-\id_V$ is the involution on $V$.

\begin{theorem}[Functoriality of Fourier transforms] \label{thm:functoriality}
    For mixed Hodge modules on abelian varieties, the Fourier-Mukai transform on the Dolbeault moduli space satisfies the following properties:
    \begin{itemize}
        \item[(a)] For $(L, 0) \in \hat{A}\times V$, one has a canonical and functorial isomorphism
        $$ \fmd_A\left(-\otimes(L, 0)\right)=\mathbf{L}\left(t_{(L, 0)}\right)^* \circ \fmd_A . $$

        \item[(b)] One has a canonical and functorial isomorphism
        $$ \fmd_A \circ \mathbf{D}_A=\left\langle-1_{\hat{A}\times V}\right\rangle^* \mathbf{R} \mathcal{H}om\left(\fmd_A(-), \strucsheaf_{\hat{A}\times V}\right) . $$

        \item[(c)] For a morphism $f: A \rightarrow B$ of abelian varieties, denote by $f^{\natural}: \hat{B}\times H^0(B,\Omega_B^1) \rightarrow \hat{A}\times H^0(A,\Omega_A^1)$ the induced morphism. Then one has canonical and functorial isomorphisms
        $$ \begin{aligned} \mathbf{L}\left(f^{\natural}\right)^* \circ \fmd_A & =\fmd_B \circ f_{+}, \\
        \mathbf{R} f_*^{\natural} \circ \fmd_B & =\fmd_A \circ f^{+}, \text{ when $f$ is smooth.}
        \end{aligned} $$

        \item[(d)] If $\M$ is a pure Hodge module, then $$\fmd_A(\gr^F_\bullet(f_+ \M)) \simeq  \bigoplus_i \fmd_A(\gr^F_{\bullet}\mathcal{H}^i(f_+ \M))[-i] \in \mathrm{D}^b_{\operatorname{coh}}(\A_X),$$
        where $\A_X=\operatorname{Sym}^\bullet T_X$.

    \end{itemize}
\end{theorem}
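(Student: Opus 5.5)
The plan is to mirror Schnell's proof of the corresponding statements for $\operatorname{FM}_A$ in \cite{holonomicDmodules}, with the de Rham side replaced by the coherent-sheaf descriptions of Saito and Laumon recalled in Section 2. In each case I will write $\fmd_A$ as an integral transform of the coherent sheaf $\gr^F\M$ on $A\times V$ and reduce to the classical Mukai calculus: base change, the projection formula, and Grothendieck duality for the abelian scheme $A\times V\to V$. The kernel is $P$ pulled back from $A\times\hat{A}$, and it is constant in the $V$-direction.

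\textbf{Part (a).} The twist $\M\otimes(L,0)$ has the same Hodge filtration as $\M$ tensored with $L$, so $\gr^F(\M\otimes (L,0))\simeq \gr^F\M\otimes p_1^*L$. The seesaw isomorphism $p_{12}^*P\otimes p_1^*L\simeq (\id_A\times t_L)^*p_{12}^*P$ then gives the formula after flat base change along $t_{(L,0)}$.

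\textbf{Part (b).} I will use the duality theorem of Section 2,
\[
\mathbf{R}\mathcal{H}om(\gr^F\M,\strucsheaf_{T^*A}[g])\simeq p^*\omega_A\otimes(-1)^*_{T^*A}\gr^F\M'.
\]
Grothendieck duality for the proper smooth map $p_{23}$ of relative dimension $g$ commutes $\mathbf{R}\mathcal{H}om(-,\strucsheaf)$ past $\mathbf{R}p_{23*}$, at the cost of a relative dualizing sheaf $p_1^*\omega_A[g]$; since $\omega_A$ is trivial, this is harmless. Dualizing the kernel replaces $P$ by $P^{-1}=(1\times(-1))^*P$, and this sign combines with the $(-1)$ on $V$ coming from Saito's formula. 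The result is $\langle -1_{\hat A\times V}\rangle^*$. The step I expect to be fiddly is checking that the involution $\iota_V$ in the definition is compatible with the sign twist $(-1)^k$ on $\operatorname{Sym}^kT_A$, so that the shifts and signs come out exactly as stated.

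\textbf{Part (c).} For $f_+$ I will use Laumon's formula
\[
\gr^F f_+\M\simeq \mathbf{R}f_*\bigl(\omega_{A/B}\otimes \gr^F\M\otimes^{\mathbf L}_{\A_A}f^*\A_B\bigr).
\]
Geometrically, it says that $\gr^F f_+\M$ is obtained from $\gr^F\M$ on $A\times V_A$ by pulling back along $\id_A\times (df)^t: A\times V_B\to A\times V_A$, derived, and then pushing forward along $f\times\id$. Substituting this into $\fmd_B$, I will use $(f\times \id)^*P_B\simeq(\id\times\hat f)^*P_A$ together with base change and the projection formula. That rewrites $\fmd_B(f_+\M)$ as $\mathbf{L}(f^\natural)^*\fmd_A(\M)$, where $f^\natural=\hat f\times (df)^t$. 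The twist by $\omega_{A/B}$ is trivial, and the grading shift only affects the $\bullet$ index, not the underlying sheaf. For smooth $f$ and $f^+$, the filtration is pulled back, so $\gr^F f^+\M$ is the pushforward along the closed embedding $A\times V_B\hookrightarrow A\times V_A$ of the pullback of $\gr^F\M$. The same kernel manipulation, now with pushforward along $f^\natural$, gives the second formula. Alternatively, it follows by adjunction from the first formula combined with (b).

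\textbf{Part (d).} This follows from the decomposition $\gr^F_\bullet f_+\M\simeq\bigoplus_i\gr^F_\bullet\mathcal H^if_+\M[-i]$ recalled in Section 2, by applying the additive exact functor $\fmd$.

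The main obstacle is (c): I need to identify Laumon's derived tensor $\otimes^{\mathbf L}_{\A_A}f^*\A_B$ with derived pullback along $\id\times(df)^t$, including the behavior of $\iota_V$ under that map.
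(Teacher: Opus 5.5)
Your proposal is correct and follows the paper's proof essentially step for step. The paper uses the seesaw isomorphism for (a). For (b) it uses Saito's duality formula, Grothendieck duality for $p_{23}$ and $(\id_A\times\iota_{\hat{A}})^*P\simeq P^{-1}$. For the first formula in (c) it reads Laumon's formula as pull--push through $A\times H^0(B,\Omega_B^1)$, then applies base change, the projection formula and $(f\times\id)^*P_B\simeq(\id\times\hat{f})^*P_A$, and it reduces the smooth case to classical Mukai calculus. For (d) it uses the decomposition theorem with strictness. The only bookkeeping you left implicit is the shift in $f^+\M=f^*\M[\dim A-\dim B]$ for smooth $f$; it exactly cancels the shift $[\dim B-\dim A]$ in the classical formula that expresses the Fourier--Mukai transform of $f^*(-)$ as $\hat{f}_*$ of the transform on $B$.
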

\begin{proof}
    Part $(a)$: this works in the same way as the usual Fourier-Mukai transform for coherent sheaves, since $t_L^* P\simeq P\otimes p_1^* L$. Note that $(L,0)$ corresponds to a unitary local system under non-abelian Hodge correspondence, therefore $\gr^F\M\otimes L$ is the associated graded of the complex Hodge module $(\M\otimes L,F\otimes L)$ \cite{MHMproject}. 

    Part $(b)$: \begin{align*}
        \fmd_A(\gr^F\M') & =\mathbf{R}(p_{23})_* (p_{12}^* P\otimes p_1^* \omega_A \otimes p_{13}^*(\id\times \iota_V)^*\gr^F\M')\\
        & \simeq \mathbf{R}(p_{23})_* \bigg(p_{12}^* P\otimes p_1^* \omega_A \otimes p_{13}^*(\id\times \iota_V)^* (\id\times \iota_V)^* \\
        & \quad \mathbf{R} \mathcal{H}om{ }\left(\operatorname{gr}^F \mathcal{M}, \strucsheaf_{A\times V}[g]\right) \otimes p_1^{-1} \omega_A^{-1} \bigg)  \\
        & \simeq \mathbf{R}(p_{23})_* \bigg(  (\id_A\times \id_{\hat{A}} \times \iota_V)^* (\id_{A}\times \iota_{\hat{A}}\times \id_{V} )^*\\
        &  \mathbf{R} \mathcal{H}om{ }\left( p_{13}^*(\id\times \iota_V)^*\operatorname{gr}^F \mathcal{M} \otimes p_1^* \omega_A, p_{12}^* P^{-1}\otimes p_1^* \omega_A[g]\right)  \bigg) \\
         \simeq \mathbf{R}(p_{23})_* \bigg(  (\id_A\times \iota_{\hat{A}\times V})^* & \mathbf{R} \mathcal{H}om{ }\left( p_{13}^*(\id\times \iota_V)^*\operatorname{gr}^F \mathcal{M}\otimes p_1^* \Omega_A^g, p_{12}^* P^{-1}\otimes p_1^*\omega_A[g]\right)  \bigg) \\
        & \simeq \iota_{\hat{A}\times V}^* \mathbf{R}(p_{23})_* \bigg(    \mathbf{R} \mathcal{H}om{ }\left( p_{13}^*(\id\times \iota_V)^*\operatorname{gr}^F \mathcal{M}\otimes p_1^* \Omega_A^g, p_{12}^* P^{-1}\right)  \bigg) \\
        & \simeq \iota_{\hat{A}\times V}^* \mathbf{R} \mathcal{H}om\left(\fmd_A(\M), \mathscr{O}_{\hat{A}\times V}\right)
    \end{align*}
    where the first isomorphism is because of Saito's formula above and Grothendieck duality, second isomorphism is because $(\id,\iota_{\hat{A}})^* P\simeq P^{-1}$ on $A\times \hat{A}$, second last isomorphism is again from Grothendieck duality.  

    Part $(c)$: We first prove $\mathbf{L}\left(f^{\natural}\right)^* \circ \fmd_A =\fmd_B \circ f_{+}.$ To simplify the notation, we'll omit $\mathbf{R}$ and $\mathbf{L}$ in the formulas for derived functors. Let $W=H^0(B,\Omega_B^1)$. By Laumon's formula, 
    $$\gr^Ff_+(\M) \simeq (f\times\id_W)_* ( (df)^* \gr^F\M\otimes p_1^* \omega_A\otimes (f\times \id_W)^* p_1^* \omega_B^{-1}),$$
    where 
    \[ \begin{tikzcd}
    A \times V & A \times W \arrow[l, "df"'] \arrow[r] \arrow[d, "f \times \mathrm{id}_W"'] & A \arrow[d, "f"] \\
    &   B \times W \arrow[r] & B.
    \end{tikzcd} \]
    Therefore 
    \begin{align} 
        \fmd_B(f_+\M) & = p_{23*} (p_{12}^*P_B\otimes p_1^* \omega_B\otimes p_{13}^*(\id\times\iota_W)^* \gr^F f_+\M) \nonumber\\
        & \simeq p_{23*} (p_{12}^*P_B\otimes p_1^* \omega_B\otimes p_{13}^*(\id\times\iota_W)^* (f\times\id_W)_* ( df^* \gr^F\M\otimes p_1^* \omega_A\otimes  p_1^* \omega_B^{-1}) ) \nonumber\\
        & \simeq p_{23*} (p_{12}^*P_B \otimes p_{13}^*(\id\times\iota_W)^* (f\times\id_W)_* ( df^* \gr^F\M\otimes p_1^* \omega_A) ).  \label{eq:compatibility}
    \end{align}
    Now consider $(f^\natural)^* \fmd_A(\M)$: 
    \begin{align*}
       (f^\natural)^* \fmd_A(\M) & = (f^\natural)^* p_{23*}(p_{12}^* P_A\otimes p_1^* \omega_A\otimes p_{13}^*(\id\times \iota_V)^* \gr^F\M)\\
        &\simeq p_{23*}h^* (p_{12}^* P_A\otimes p_1^* \omega_A\otimes p_{13}^*(\id\times \iota_V)^* \gr^F\M)\\
        & \simeq p_{23*} (f\times \id_{\hat{B}\times W})_* h^* (p_{12}^* P_A\otimes p_1^* \omega_A\otimes p_{13}^*(\id\times \iota_V)^* \gr^F\M) \\
        & \simeq p_{23*} (f\times \id_{\hat{B}\times W})_* \bigg((f\times\id_{\hat{B}\times W})^* P_B\otimes p_1^*\omega_A\otimes p_{13}^*(\id\times \iota_W)^* (df)^* \gr^F\M  \bigg) \\
        & \simeq p_{23*}\bigg( p_{12}^* P_B\otimes (f\times \id_{\hat{B}\times W})_*  \big(p_1^*\omega_A\otimes p_{13}^*(\id\times \iota_W)^* (df)^* \gr^F\M \big)  \bigg) \\
        & \simeq p_{23*}\bigg( p_{12}^* P_B\otimes (f\times \id_{\hat{B}\times W})_*  p_{13}^*(\id\times \iota_W)^*\big(p_1^*\omega_A\otimes  (df)^* \gr^F\M \big)  \bigg) \\
        & \simeq p_{23*}\bigg( p_{12}^* P_B\otimes p_{13}^*(\id\times \iota_W)^* (f\times \id_{ W})_* \big(p_1^*\omega_A\otimes  (df)^* \gr^F\M \big)  \bigg)\\
        & \simeq \fmd_B(f_+ \M).
    \end{align*}
    where the first isomorphism comes from base change on the right hand side of the diagram \ref{diagram1}; second isomorphism is by chasing the left hand side of \ref{diagram1}; the third isomorphism is because of the functoriality of the Poincaré bundles; the fourth one is because of projection formula; the second last one is by base change on the diagram \ref{diagram2}; the last isomorphism is due to equation \eqref{eq:compatibility}.

    \begin{equation} \label{diagram1}
          \begin{tikzcd}
    B \times \hat{B} \times W \arrow[d, "p_{23}"'] 
      & A \times \hat{B} \times W \arrow[l, "f \times \mathrm{id}_{\hat{B}\times W}"'] 
      \arrow[r, "h"] 
      \arrow[d, "p_{23}"'] 
      & A \times \hat{A} \times V \arrow[d, "p_{23}"] \\
    \hat{B} \times W \arrow[r, "\id"'] 
    & \hat{B} \times W \arrow[r, "f^\natural"] 
      & \hat{A} \times V
    \end{tikzcd} 
    \end{equation}

\begin{equation} \label{diagram2}
    \begin{tikzcd} 
	{B\times \hat{B}\times W} && {A\times \hat{B}\times W} \\
	{B\times W} && {A\times W}
	\arrow["{(\id\times\iota_W)\circ p_{13}}"', from=1-1, to=2-1]
	\arrow["{f\times\id_{B\times W}}"', from=1-3, to=1-1]
	\arrow["{(\id\times\iota_W)\circ p_{13}}"', from=1-3, to=2-3]
	\arrow["{f\times \id_W}"', from=2-3, to=2-1]
\end{tikzcd}
\end{equation}

To prove the second part $\mathbf{R} f_*^{\natural} \circ \fmd_B  =\fmd_A \circ f^{+}$. When $f$ is smooth, $f^+(\M,F) = (f^*\M[\dim A-\dim B],f^* F)$ for smooth pullbacks, hence $$\gr^F_\bullet f^+\M \simeq f^* \gr^F_\bullet \M[\dim A-\dim B].$$ Then the formula follows from the formula for classical Fourier-Mukai transforms for coherent sheaves on abelian varieties.

Part $(d)$: it follows from the decomposition theorem for pure Hodge modules by Saito and strictness of direct images of mixed Hodge modules.
    
\end{proof}

\subsection{Künneth formula for Dolbeault cohomologies}
Let $m: A \times \cdots \times A \rightarrow A$ be the addition morphism. The $d$-fold exterior product $\mathcal{N}_d\defeq \M \boxtimes \cdots \boxtimes \M$ is a mixed Hodge module on $A \times \cdots \times A$, and $\gr^F_\bullet \mathcal{N}_d \simeq \gr^F_{\bullet}\M \boxtimes \cdots \boxtimes \gr^F_{\bullet}\M$. Setting
$$ \M_d=m_+\mathcal{N}_d=m_+(M \boxtimes \cdots \boxtimes M) \in \mathrm{D}^b \operatorname{MHM}(A) ,$$
now we want to find a Künneth formula for Dolbeault cohomology of $\M_d$.
We have the following proposition:
\begin{proposition} On $(\hat{A}\times V)\times \cdots \times (\hat{A}\times V)$, we have the following isomorphism in the bounded derived category of coherent sheaves,
    $$\fmd_{A^d}(\M\boxtimes\cdots\boxtimes M) \simeq \fmd_A(\M) \boxtimes \cdots \boxtimes \fmd_A(\M).$$
\end{proposition}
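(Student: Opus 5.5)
The plan is to reduce the statement to the Künneth formula for coherent sheaves under flat base change and proper pushforward. Everything in the definition of $\fmd$ is an exterior product on the $d$-fold product. I will write the argument for $d=2$; the general case follows by induction, or by the identical argument with $d$ factors. Put $A^2=A\times A$. Then
\[
\hat{A^2}=\hat A\times\hat A,\qquad H^0(A^2,\Omega^1_{A^2})=V\times V,\qquad T^*A^2\simeq (A\times V)\times(A\times V),
\]
and after reordering factors
\[
A^2\times\widehat{A^2}\times (V\times V)\simeq (A\times\hat A\times V)\times(A\times\hat A\times V).
\]
Let $q_1,q_2$ denote the two projections of this product.

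The first step is to check that each ingredient of the kernel is an exterior product. The Poincaré bundle satisfies $P_{A\times A}\simeq P_A\boxtimes P_A$ after reordering, by the see-saw principle, or directly from the universal property of the dual of a product. Next, $\omega_{A^2}\simeq\omega_A\boxtimes\omega_A$, and the involution $\iota_{V\times V}=\iota_V\times\iota_V$. As stated just before the proposition, $\gr^F_\bullet(\M\boxtimes\M)\simeq\gr^F_\bullet\M\boxtimes\gr^F_\bullet\M$ as graded modules over $\operatorname{Sym}T_{A^2}=\operatorname{Sym}T_A\boxtimes\operatorname{Sym}T_A$, so the associated sheaves on $T^*A^2$ satisfy $\gr^F(\M\boxtimes\M)\simeq \gr^F\M\boxtimes\gr^F\M$. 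The maps $p_{12},p_1,p_{13}$ for $A^2$ are products of the corresponding maps for $A$. Pullback of an exterior product along a product map is the exterior product of the pullbacks, and tensor products of exterior products split factorwise. Hence the kernel $K_{A^2}$ defining $\fmd_{A^2}(\M\boxtimes\M)$ is isomorphic to $q_1^*K_A\otimes q_2^*K_A$. Here $K_A=p_{12}^*P\otimes p_1^*\omega_A\otimes p_{13}^*(\id\times\iota_V)^*\gr^F\M$ is the kernel for $\fmd_A(\M)$. All pullbacks involved are along flat projections, and $\gr^F\M$ is coherent and Cohen–Macaulay, so no higher Tor terms arise. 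The exterior tensor product over $\C$ is exact anyway.

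The second step is to apply the Künneth formula for derived pushforward along $p_{23}^{A^2}=p_{23}\times p_{23}$:
\[
\mathbf{R}(p_{23}\times p_{23})_*(K_A\boxtimes K_A)\simeq \mathbf{R}p_{23*}K_A\boxtimes\mathbf{R}p_{23*}K_A .
\]
This holds for quasi-coherent complexes on products of separated schemes over a field. One way to see it is to factor $p_{23}\times p_{23}=(p_{23}\times\id)\circ(\id\times p_{23})$ and use flat base change together with the projection formula at each stage. The morphism $p_{23}$ is proper, being the base change of $A\to\mathrm{pt}$, and $p_{23}$ is flat, so these tools apply. The result lands in $\mathrm{D}^b_{\operatorname{coh}}$.

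The main obstacle is bookkeeping rather than substance. One must pin down the identification $\gr^F(\M\boxtimes\M)\simeq\gr^F\M\boxtimes\gr^F\M$ including the shifts in the grading, since the Hodge filtration on an external product is the convolution filtration. One must also keep the sign conventions in the differentials consistent. For complexes $\M^\bullet$ the same argument applies termwise to the total double complex.
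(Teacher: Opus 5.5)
Your proposal is correct and takes the same approach as the paper. The paper's proof is the one line ``base change and functoriality of Poincar\'e bundles on products''; you fill this in by writing the kernel as an exterior product (using $P_{A\times A}\simeq P_A\boxtimes P_A$) and then applying the Künneth formula via flat base change and the projection formula. One minor point: the Cohen--Macaulay property of $\gr^F\M$ plays no role, because the only tensor products in the kernel are with line bundles or exterior products over $\C$, and both are exact.
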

\begin{proof}
    This is easily proved using base change and functoriality of Poincaré bundles on products of abelian varieties.
\end{proof}
As a consequence of this proposition, we get a Künneth formula for Dolbeault cohomology of exterior products of mixed Hodge modules on abelian varieties. Recall $\operatorname{Dol}(\M,L,\theta)$ is the Dolbeault complex of $(\M,L,\theta)$.
\begin{corollary} \label{cor:kunnethDolbeault}
    We have a Künneth formula for the Dolbeault cohomology of $(\mathcal{N}_d,L_1\boxtimes\cdots\boxtimes L_d,p_1^*\theta+\cdots+p_d^* \theta_d))$:
    \begin{align*}
        \quad \quad & \mathbb{H}^k  (A^d,\operatorname{Dol}  (\mathcal{N}_d,L_1\boxtimes\cdots\boxtimes L_d,p_1^*\theta+\cdots+p_d^* \theta_d))  \simeq \\ 
        & \bigoplus_{a_1+\cdots+a_d=k}\mathbb{H}^{a_1}(A,\operatorname{Dol}(\M,L_1,\theta_1))\otimes \cdots \otimes \mathbb{H}^{a_d}(A,\operatorname{Dol}(\M,L_d,\theta_d))
    \end{align*}
\end{corollary}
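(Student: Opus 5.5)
The plan is to derive the corollary from the preceding proposition by restricting to a closed point and applying base change. Fix a closed point
\[
x=\bigl((L_1,\theta_1),\dots,(L_d,\theta_d)\bigr)\in(\hat{A}\times V)^d .
\]
Under the identification $\widehat{A^d}\times H^0(A^d,\Omega^1_{A^d})\simeq(\hat{A}\times V)^d$, the point $x$ is the Higgs line bundle $(L_1\boxtimes\cdots\boxtimes L_d,\;p_1^*\theta_1+\cdots+p_d^*\theta_d)$ on $A^d$. Here we use that the Poincaré bundle of $A^d$ is the exterior product of the Poincaré bundles of the factors, and that $H^0(A^d,\Omega^1)=V^{\oplus d}$.

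\textbf{Step 1: left side.} By Lemma 13.3 of \cite{holonomicDmodules} together with base change along $p_{23}$, as explained right after that lemma, the derived restriction $\mathbf{L}i_x^*\fmd_{A^d}(\mathcal{N}_d)$ is quasi-isomorphic to $\mathbf{R}\Gamma\bigl(A^d,\operatorname{Dol}(\mathcal{N}_d,L_1\boxtimes\cdots\boxtimes L_d,\sum p_j^*\theta_j)\bigr)$, up to the fixed shift $[g d]$. For this we use $\gr^F_\bullet\mathcal{N}_d\simeq\boxtimes\gr^F_\bullet\M$, so that $\gr^F\mathcal{N}_d$ on $T^*A^d=(A\times V)^d$ is the exterior product of the sheaves $\gr^F\M$. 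Base change is legitimate because $p_{23}$ is proper and flat (it is a projection with fiber $A^d$), and the sheaf being pushed forward is flat over the base: Lemma 13.3 exhibits it as a cokernel of a resolution whose terms are pulled back from $A^d$.

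\textbf{Step 2: right side.} Restricting the exterior product at $x$ gives
\[
\mathbf{L}i_x^*\bigl(\fmd_A(\M)\boxtimes\cdots\boxtimes\fmd_A(\M)\bigr)\simeq \mathbf{L}i_{x_1}^*\fmd_A(\M)\otimes_\C\cdots\otimes_\C\mathbf{L}i_{x_d}^*\fmd_A(\M),
\]
where $x_j=(L_j,\theta_j)$. This holds because derived pullback to a point of a product, applied to an exterior product, is the tensor product over $\C$ of the pointwise pullbacks. Applying Step 1 with $d=1$ to each factor identifies it with $\mathbf{R}\Gamma(A,\operatorname{Dol}(\M,L_j,\theta_j))[g]$.

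\textbf{Step 3: conclusion.} Combining the two steps with the proposition, taking $\mathcal{H}^k$, and using the Künneth formula for tensor products of complexes of $\C$-vector spaces gives the stated direct-sum decomposition. The shifts match, since $g d=\sum_j g$.

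The main care point is Step 1, namely justifying that derived restriction to a point commutes with $\mathbf{R}p_{23*}$ and lands exactly on the Dolbeault complex with the correct signs. The involution $\iota_V$ in the definition of $\fmd$ exists precisely so that Lemma 13.3 produces $+\theta$ rather than $-\theta$. This should be checked on the product once: the involution on $V^d$ is the product of the involutions on the factors, so it is compatible with the exterior product.
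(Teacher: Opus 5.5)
Your argument is correct and follows the paper's route: the corollary is obtained from the proposition $\fmd_{A^d}(\M\boxtimes\cdots\boxtimes\M)\simeq\fmd_A(\M)\boxtimes\cdots\boxtimes\fmd_A(\M)$ by derived restriction to a closed point via base change, followed by the Künneth formula for tensor products of complexes over $\C$.

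One correction to Step 1. The sheaf $p_{12}^*P\otimes p_1^*\omega_A\otimes p_{13}^*(\id\times\iota_V)^*\gr^F\M$ is not flat over $\hat{A}\times V$ in general. For example, when $\M=\strucsheaf_A$ the sheaf $\gr^F\M$ is supported on the zero section. You do not need this flatness: derived base change already holds because $p_{23}$ is flat. The resolution of Lemma 13.3 has terms pulled back from the abelian variety, hence flat over the base, and this is what lets you compute the derived restriction termwise as the Dolbeault complex.
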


\section{Supports of transforms are unitary translates of subtori} 
In \cite{holonomicDmodules}, Schnell proved that the cohomology support loci of the Fourier transform of a holonomic $\D$-modules on an abelian variety $A$ are all translates of linear subvarieties of $A^\natural$, where $A^\natural$ is the space of line bundles on $A$ with an integrable connection. In \cite{torsionpoints}, he showed if a mixed Hodge module has an integral structure, then its cohomology support loci are torsion translates of linear subvarieties. In this section, we follow exactly the same idea as in \cite{torsionpoints} and make it clear that, in the case of mixed Hodge modules, one of the results in the paper \cite{torsionpoints} is essentially the following:
\begin{theorem} \label{thm:unitarytranslates}
    The cohomology support loci of $\M^\bullet\in \operatorname{D}^b_{\MHM}(A)$ on $A^\natural$ are translates of linear subvarieties by points representing rank one unitary connections.
\end{theorem}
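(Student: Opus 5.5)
We follow the strategy of \cite{torsionpoints}. By \theoremref{thm:schnelllinearity}, each $S^k_m(\M^\bullet)$ is already a finite union of translates $(L_i,\nabla_i)\otimes \IM(B_i^\natural\to A^\natural)$ of linear subvarieties. So the task is to show that every such translate can be chosen so that $(L_i,\nabla_i)$ is unitary. Since $S^k_m(\M^\bullet)$ corresponds under $\operatorname{RH}$ to $S^k_m(\operatorname{DR}(\M^\bullet))\subset \operatorname{Char}(A)$, we will work on the Betti side. There $\operatorname{Char}(A)\simeq (\C^*)^{2g}$, a linear subvariety is a subtorus $T$, and the unitary characters form the maximal compact subgroup $U(1)^{2g}$. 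The desired statement is that each irreducible component $\chi\cdot T$ contains a unitary point. Equivalently, the image of $\chi$ in $\operatorname{Char}(A)/T\simeq(\C^*)^r$ has absolute values $|\cdot|=1$.

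The key input is the structure coming from $\Q$-mixed Hodge modules. First, we reduce to a zero-dimensional problem. Choose a quotient $f:A\to B$ whose dual inclusion $\operatorname{Char}(B)\hookrightarrow\operatorname{Char}(A)$ has image $T$. Pushing forward by $f$ and using the projection formula together with \theoremref{thm:functoriality}(c) on the de Rham side, one sees that the component $\chi T$ of $S^k_m(\M^\bullet)$ reflects isolated points of the support loci of the complex $f_+(\M^\bullet\otimes\chi)$ on $B$. After the usual trick of a translation, it reflects isolated points of the loci of $f_+\M^\bullet$ itself. These are again complexes in $\operatorname{D}^b_{\MHM}$, by \cite{SaitoMHM}. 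So it suffices to show that isolated points of $S^k_m(\mathcal N^\bullet)$, for $\mathcal N^\bullet\in\operatorname{D}^b_{\MHM}(B)$, are unitary.

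Second, we use the Galois/field-automorphism argument of \cite{torsionpoints}. Since $\operatorname{DR}(\mathcal N^\bullet)=P\otimes_\Q\C$ has a $\Q$-structure, the loci $S^k_m$ are defined over $\Q$ in $\operatorname{Char}(B)$. Hence every isolated point has coordinates in $\bar\Q$, and the set of isolated points is stable under $\operatorname{Aut}(\C/\Q)$. Unlike in \cite{torsionpoints}, we lack an integral structure, so we cannot conclude that the points are torsion. We only want $|\chi(\gamma)|=1$.

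For this we use Hodge theory instead of integrality. Suppose $\chi$ is an isolated point of $S^k(\mathcal N^\bullet)$, with absolute value $|\chi|$. Write $\chi=u\cdot\rho$ with $u$ unitary and $\rho$ a real positive character, so $\rho=\exp$ of a real class in $H^1(B,\mathbb R)$. We will use the nonabelian Hodge/Dolbeault picture. The family $\chi_t=u\rho^t$, $t\in\mathbb R$, maps under $\operatorname{NAH}$ to Higgs line bundles $(L_u,t\theta)$ with a fixed line bundle and a scaled Higgs field. The $\C^*$-action on the Hodge filtration then suggests that the dimension of the cohomology is independent of $t\neq0$. Concretely, $\operatorname{Dol}(\mathcal N,L,t\theta)\simeq\operatorname{Dol}(\mathcal N,L,\theta)$ via rescaling the grading on $\gr^F_\bullet$. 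This rescaling argument is the step where the paper's main theorem will ultimately be proved, so we must avoid circularity.

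The safer first attempt is therefore purely Betti-theoretic. We would use the weight/Hodge structure on the cohomology $H^k(B,\operatorname{DR}(\mathcal N)\otimes\chi)$ for $\chi$ unitary, together with the fact that, for a $\Q$-MHM, the cohomology jump loci are invariant under complex conjugation $\chi\mapsto\bar\chi$ and under $\chi\mapsto\chi^{-1}$ via duality (\theoremref{thm:functoriality}(b) on the de Rham side). An isolated point stable under the group generated by $\operatorname{Aut}(\C/\Q)$ and $\chi\mapsto\bar\chi^{-1}$ then has a finite orbit. Taking a product over the orbit produces a character that is both real-positive-part trivial and algebraic. Kronecker-type reasoning applied to $|\sigma(\chi)|$ for all $\sigma$ would then force $|\chi|=1$ after possibly passing to a unitary twist.

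The main obstacle is justifying the symmetry $\chi\mapsto\bar\chi^{-1}$, which is exactly where a Hodge-theoretic input (polarization, or the compatibility of the isolated-point cohomology with a variation of Hodge structure as in \cite{torsionpoints}) is needed. I would first try to get it from Schnell's argument that an isolated point gives a rank-one summand of the Fourier transform carrying a mixed Hodge structure.
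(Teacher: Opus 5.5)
Your proposal has a genuine gap at the step that carries the whole theorem, namely the isolated points, and the mechanism you sketch there cannot work.

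\begin{itemize}
\item Without an integral structure there is no Kronecker theorem available.
\item $\operatorname{Aut}(\C/\Q)$-orbits say nothing about absolute values: a character defined over $\Q$ that takes the value $2$ on one generator of $\pi_1(A)$ is Galois-fixed.
\item Duality is not a symmetry of a single locus; it gives $S^k_m(\M^\bullet)^{-1}=S^{-k}_m(\mathbf{D}_A\M^\bullet)$.
\item Even if the finite set of isolated points were invariant under $\chi\mapsto\bar\chi^{-1}$, no individual point would be forced to be fixed. A pair $\{\chi,\bar\chi^{-1}\}$ is invariant and its product is unitary, so taking products over orbits discards exactly the information you need.
\end{itemize}

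The missing idea is pointwise, not set-wise. One shows that the rank-one local system at an isolated point, up to inversion, underlies a polarizable variation of Hodge structure. This holds because it is realized inside an object obtained from $\M^\bullet$ by operations that preserve $\operatorname{D}^b_{\MHM}$. That is what the construction of \cite[1.3.2]{torsionpoints} provides, and the paper uses it verbatim with ``torsion'' replaced by ``unitary''. Unitarity is then immediate: in rank one the Hodge structure has a single type, so the polarization is a flat definite hermitian form. Galois conjugation and Kronecker enter only afterwards, in Schnell's setting, to upgrade unitary to torsion using a $\Z$-structure; for this theorem they are simply dropped. Your closing sentence points in this direction, but you should use the Hodge structure to conclude $|\chi|=1$ directly, not to manufacture a set-wise symmetry.

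Your reduction to isolated points is also set up the wrong way. If $\operatorname{Char}(B)\hookrightarrow\operatorname{Char}(A)$ has image $T$, then $\chi T\cap\IM f^\natural$ is either empty or all of $T$. So through $(f^\natural)^{-1}S^k_m(\M^\bullet)=S^k_m(f_+\M^\bullet)$, the component $\chi T$ never yields an isolated point on $B$. Moreover, you cannot form $\M^\bullet\otimes\chi$ for the very character whose unitarity is in question and remain in $\operatorname{D}^b_{\MHM}(A)$; only unitary twists are permitted. The paper instead chooses $B$ complementary to $T$, so that $Z\cap\IM f^\natural$ is finite and nonempty. It then argues by induction on $\dim A$, via the lemma that $S^k_m(\M^\bullet)\cap\IM f^\natural$ is a finite union of unitary translates of linear subvarieties.
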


We quickly sketch the proof in our setting below, which is essentially the same as in \cite{torsionpoints}. We need the following lemma:

\begin{lemma}[Lemma 1.16 of \cite{torsionpoints}]
    Let $f: A \rightarrow B$ be a homomorphism from $A$ to a lower dimensional abelian variety $B$. Then every intersection
    $$ S_m^k(A, \M) \cap \operatorname{Im} f^\natural( B^\natural) $$ is a finite union of unitary translates of linear subvarieties.
\end{lemma}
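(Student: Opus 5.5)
Since $f$ is a homomorphism of abelian varieties, the natural plan is to reduce the intersection $S^k_m(A,\M)\cap \IM f^\natural(B^\natural)$ to a cohomology support locus of a complex on $B$, and then argue by induction on dimension. First, after replacing $f$ by its Stein factorization, I may assume $f$ is surjective with connected fibres. Indeed, an isogeny factor changes $B^\natural$ only by a finite quotient, and unitary translates are preserved under finite maps. Then $f^\natural: B^\natural\to A^\natural$ is a closed embedding onto a linear subvariety.

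For $(L,\nabla)\in B^\natural$, the projection formula gives
$$H^k\bigl(A,\operatorname{DR}(\M\otimes f^*(L,\nabla))\bigr)\simeq H^k\bigl(B,\operatorname{DR}(f_+\M\otimes(L,\nabla))\bigr).$$
Hence $(f^\natural)^{-1}S^k_m(A,\M)=S^k_m(B,f_+\M)$, where $f_+\M\in\operatorname{D}^b_{\MHM}(B)$ by Saito. It therefore suffices to prove that $S^k_m(B,\mathcal{N})$ is a finite union of unitary translates of linear subvarieties for every $\mathcal{N}\in\operatorname{D}^b_{\MHM}(B)$, where $\dim B<\dim A$. The image of such a set under the linear map $f^\natural$ is again of this form, because $f^\natural$ sends unitary points to unitary points and linear subvarieties to linear subvarieties.

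I would set this up as an induction on $\dim A$, proving \theoremref{thm:unitarytranslates} and the lemma simultaneously, exactly as in \cite{torsionpoints}. In that argument the lemma for $A$ uses the theorem on $B$, and the theorem on $A$ is then deduced from the lemma. For $\dim B=0$ the statement is trivial.

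The main obstacle is that the inductive hypothesis is stated for single mixed Hodge modules, or for loci of type $S^k_m$, while $f_+\M$ is a complex. To handle this, I would formulate the induction directly for objects of $\operatorname{D}^b_{\MHM}$, as the theorem already does, so that no decomposition is needed. The base ingredient is Schnell's \theoremref{thm:schnelllinearity}: each locus is a finite union of translates of linear subvarieties. What remains is to show that the translating points can be chosen unitary. Following \cite{torsionpoints}, I would do this using the real structure and complex conjugation. The locus is stable under $(L,\nabla)\mapsto$ the conjugate dual flat bundle, since $\M$ is defined over $\Q$, so it is preserved by conjugation of the local system. A translate $\chi\cdot\operatorname{Im}(C^\natural)$ that is stable under this involution contains a unitary point, because the involution acts on the coset space $A^\natural/C^\natural$ with unitary fixed points.
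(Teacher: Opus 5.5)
Your reduction matches the paper's proof. Replacing $f$ by its connected-fibre factor does not change $\IM f^\natural(B^\natural)$. The projection formula then gives $S^k_m(A,\M)\cap\IM f^\natural(B^\natural)=f^\natural\bigl(S^k_m(B,f_+\M)\bigr)$ with $f_+\M\in\operatorname{D}^b_{\MHM}(B)$, and you conclude by \theoremref{thm:unitarytranslates} on the lower-dimensional $B$.

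\textbf{The gap is your last paragraph.} That is where unitarity has to enter the simultaneous induction, and the mechanism you propose fails.
\begin{itemize}
\item The $\Q$-structure on $\operatorname{DR}(\M)$ makes the loci in $\operatorname{Char}(A)$ stable under $\chi\mapsto\bar\chi$ (indeed under $\operatorname{Gal}(\C/\Q)$). The fixed points of that involution are the real characters, not the unitary ones.
\item The involution whose fixed points are unitary, $\chi\mapsto\bar\chi^{-1}$, does not preserve $S^k_m(\M)$. Verdier duality gives $\dim H^k(A,\operatorname{DR}(\M)\otimes L_{\chi^{-1}})=\dim H^{-k}(A,\operatorname{DR}(\mathbf{D}_A\M)\otimes L_\chi)$. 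Combined with conjugation invariance, this involution carries $S^k_m(\M)$ onto $S^{-k}_m(\mathbf{D}_A\M)$.
\item Even a set that is stable can have its components exchanged by the involution, for instance two isolated points $\chi\neq\bar\chi^{-1}$. So you would still not get a unitary point on each component. Your coset-space observation is fine, but the stability hypothesis it needs is not available.
\end{itemize}
In fact no argument using only the $\Q$-structure can work. Let $\Q_\psi$ be the rank one $\Q$-local system with monodromy $2$ along one generator of $\pi_1(A)$ and $1$ along the others. Every nonempty support locus of the $\Q$-perverse sheaf $\Q_\psi[g]$ is the single real, non-unitary point $\psi^{-1}$.

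\textbf{What is missing is the Hodge-theoretic input for isolated points.} The paper takes this from the construction in \cite[1.3.2]{torsionpoints}: one shows that every isolated point of a support locus of an object of $\operatorname{D}^b_{\MHM}(A)$ is unitary. The source of unitarity there is Hodge theory, not the $\Q$-structure. For instance, a rank one local system underlying a polarizable variation of Hodge structure is unitary, because its Hodge metric is flat.

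This step cannot be extracted from the lemma itself. A positive-dimensional component $Z$ meets $\IM f^\natural(B^\natural)$ for $B$ complementary to $Z$. An isolated point, however, need not lie on $\IM f^\natural(B^\natural)$ for any $f$ with $\dim B<\dim A$. On a simple abelian variety the only such image is the origin, and the isolated points are the entire content of \theoremref{thm:unitarytranslates}. If you replace the conjugation paragraph by this isolated-point step, your argument becomes the paper's.
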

\begin{proof}
    By induction, and it's the same as the proof in \cite{torsionpoints}.
\end{proof}

\begin{proof}[Proof of \theoremref{thm:unitarytranslates}]
    The theorem is equivalent to showing that every irreducible component of $S^k_m(\M)$ contains a unitary point. We do induction on dimensions of abelian varieties. For isolated points in cohomology support loci, the same construction in \cite[1.3.2]{torsionpoints} work literally the same, by replacing the word ''torsion points" there by unitary points here. This proves the theorem also for simple abelian varieties.

    If $Z$ is a positive dimensional irreducible component of $S^k_m(\M)$, by \cite{holonomicDmodules}, we know it's a linear subvariety. Then we apply the above lemma above to a map $f:A\rightarrow B$ where $B$ is a complementary abelian variety to $Z$, then the lemma and induction hypothesis imply $Z$ contains a unitary point.
\end{proof}

With this theorem, if in addition $\M$ has a $\Z$-structure, then by Kronecker's theorem on algebraic integers, it follows that these unitary points must have finite order. This is one of the results in \cite{torsionpoints}.

Next we prove a variant of \theoremref{thm:unitarytranslates}. The reason is that we will need to take tensor product of a mixed Hodge module with a unitary local system, but a unitary local system is far from being defined over $\Q$. To stay in the category of mixed Hodge modules defined over $\Q$, we need to take tensor product with a  $\Q$-VHS. Given a rank one $\bar{\Q}$-local system, we can get a $\Q$-local system by taking the direct sum of Galois conjugates. If all Galois conjugates of a rank one $\bar{\Q}$-local system are unitary, then the direct sum is a unitary $\Q$-VHS. The next theorem ensures the existence of such rank one $\bar{\Q}$-local systems.

By a similar definition as in \definitionref{def:support loci definition}, given a constructible complex $P$, we can define its cohomology support loci in $\operatorname{Char}(A)$, and it's denoted by $S_m^k(P)$.

\begin{theorem}\label{thm:rational translates}
    Let $\M^\bullet\in \operatorname{D}^b_{\MHM}(A)$ be a complex of mixed Hodge modules on $A$. For every irreducible component $Z$ of $S_m^k(\operatorname{DR}(\M^\bullet))\subset \operatorname{Char}(A)$, $Z$ contains a closed point $\chi$ defined over $\bar{\Q}$, such that all Galois conjugates of $\chi$ are unitary rank one local systems.
\end{theorem}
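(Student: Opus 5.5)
We follow the strategy of \theoremref{thm:unitarytranslates}, which in turn follows \cite{torsionpoints}, and strengthen the conclusion at each step. The key extra input is that $\operatorname{DR}(\M^\bullet)$ is defined over $\Q$. Hence the loci $S^k_m(\operatorname{DR}(\M^\bullet))\subset\operatorname{Char}(A)\simeq(\C^*)^{2g}$ are defined over $\Q$, since they are cut out by rank conditions on a perfect complex over the group ring $\Q[\pi_1(A)]$. Consequently $\operatorname{Gal}(\bar\Q/\Q)$ permutes their irreducible components, and every component is defined over $\bar\Q$. By \theoremref{thm:schnelllinearity} and \theoremref{thm:unitarytranslates}, a component $Z$ has the form $\chi_0\cdot\operatorname{Im}(\operatorname{Char}(B)\to\operatorname{Char}(A))$ with $\chi_0$ unitary. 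The plan is to choose the translating point more carefully, so that it is $\bar\Q$-rational and all of its Galois conjugates are unitary.

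\textbf{Step 1: isolated points.} We proceed by induction on $\dim A$, exactly as in \cite[1.3.2]{torsionpoints}. Let $\chi$ be an isolated point of $S^k_m(\operatorname{DR}(\M^\bullet))$. Since the locus is defined over $\Q$, the point $\chi$ is $\bar\Q$-rational, and each Galois conjugate $\sigma(\chi)$ is again an isolated point of the same locus. The construction of \cite[1.3.2]{torsionpoints} then shows that an isolated point of the support locus of a complex of mixed Hodge modules is unitary. Roughly, Fourier--Mukai localizes $\M^\bullet$ near $\chi$, and one produces a mixed Hodge module, or an admissible variation of mixed Hodge structure, whose underlying local system is $\chi$ up to semisimplification; such a rank one object must be unitary. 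This gives unitarity of every $\sigma(\chi)$. The point to check is that the argument only uses that $\sigma(\chi)$ is isolated in $S^k_m(\operatorname{DR}(\M^\bullet))$, which is a Galois-stable set. This settles the case of simple $A$.

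\textbf{Step 2: positive-dimensional components.} Let $Z$ be a positive-dimensional component, which is linear and defined over $\bar\Q$. Choose a complementary abelian variety $f:A\to B$ to the subtorus underlying $Z$, so that $Z\cap\operatorname{Im}f^\natural$ is finite and nonempty. We then apply the analogue of Lemma 1.16 of \cite{torsionpoints}: the set $S^k_m(\operatorname{DR}(\M^\bullet))\cap\operatorname{Im}f^\natural$ is controlled by the support loci of $f_+$ of suitable twists on $B$. In fact it is a union of components of the support loci of a complex in $\operatorname{D}^b_{\MHM}(B)$, after twisting by $\Q$-structures. Because $\dim B<\dim A$, the induction hypothesis on $B$ gives a point of $Z\cap\operatorname{Im}f^\natural$ which is $\bar\Q$-rational with all Galois conjugates unitary.

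\textbf{Main obstacle.} The delicate part is keeping everything defined over $\Q$ in Step 2. The intersection with $\operatorname{Im}f^\natural$ involves twisting by characters that are not rational. One must arrange that the relevant complexes on $B$ are genuinely objects of $\operatorname{D}^b_{\MHM}(B)$ over $\Q$, for instance by replacing a twist with the direct sum of its Galois conjugates, as in the remark before the theorem. One must also check that the finitely many points of $Z\cap\operatorname{Im}f^\natural$ appear as isolated points of such a $\Q$-defined locus. Only then does the Galois-stability argument of Step 1 apply to all conjugates simultaneously.
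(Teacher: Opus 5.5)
Your Step 1 is the paper's argument; for the unitarity you can simply cite \theoremref{thm:unitarytranslates}. The gap is in Step 2, in the sentence ``the induction hypothesis on $B$ gives a point of $Z\cap\operatorname{Im}f^\natural$''. Your last paragraph concedes this is unchecked.

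Your first worry does not arise here. Intersecting with $\operatorname{Im}f^\natural$ needs no twist: this subtorus passes through the trivial character and meets $Z$ by complementarity. The projection formula then gives $(f^\natural)^{-1}S^k_m(\operatorname{DR}\M^\bullet)=S^k_m(\operatorname{DR}f_+\M^\bullet)$, with $f_+\M^\bullet\in\operatorname{D}^b_{\MHM}(B)$ already defined over $\Q$. This is exactly the paper's step.

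Your second worry, however, is real, and the isolatedness you need can fail. A point of $Z\cap\operatorname{Im}f^\natural$ may lie on a positive-dimensional component of $S^k_m(\operatorname{DR}f_+\M^\bullet)$ that comes from a different component of $S^k_m(\operatorname{DR}\M^\bullet)$. Induction then produces a good point on that other component, not in $Z$. For example, take the following data:
\begin{itemize}
\item $A=E_1\times E_2$ with $E_1,E_2$ non-isogenous elliptic curves;
\item $a$ a nontrivial torsion point of $\operatorname{Char}(E_1)$;
\item $\delta_0$ the Hodge module of the origin, on $E_2$ in the first summand below and on $E_1$ in the second;
\item $\M^\bullet=\bigl(\bigoplus_\sigma L_{\sigma(a)^{-1}}\bigr)[1]\boxtimes\delta_0\oplus\delta_0\boxtimes\Q^H_{E_2}[1]$.
\end{itemize}
Then $Z=\{a\}\times\operatorname{Char}(E_2)$ is a component of $S^0_1$. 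The only linear subvariety complementary to it is $\operatorname{Char}(E_1)\times\{1\}$, and this is itself a component. So $(f^\natural)^{-1}S^0_1=\operatorname{Char}(E_1)$, and induction says nothing about $a$. The paper's proof passes over the same point when it asserts that the pulled-back point lies in $Z$. Moving the complement off the bad locus by a twist would bring back exactly the $\Q$-structure problem you raise.

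The simplest repair bypasses the pushforward entirely.
\begin{itemize}
\item Each conjugate $\sigma(Z)$ is again an irreducible component of the same $\Q$-defined locus, so it contains a unitary point by \theoremref{thm:unitarytranslates}.
\item Write $Z=\chi_1\cdot T$ with $\chi_1\in Z(\bar\Q)$ and $T=\{\chi:\chi|_K=1\}$ for a saturated sublattice $K\subset\pi_1(A)$. Then $\sigma(T)=T$. Choose $K'$ with $\pi_1(A)=K\oplus K'$.
\item Let $w$ be the character with $w|_K=\chi_1|_K$ and $w|_{K'}=1$. Then $w\in Z(\bar\Q)$.
\item For every $\sigma$, pick a unitary point $u_\sigma\in\sigma(Z)=\sigma(\chi_1)\cdot T$. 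Then $\sigma(w)|_K=\sigma(\chi_1)|_K=u_\sigma|_K$, while $\sigma(w)|_{K'}=1$. Hence every conjugate of $w$ is unitary.
\end{itemize}
This handles isolated points and positive-dimensional components at once, with no further induction on $\dim A$.
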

\begin{proof}
    The character variety $\operatorname{Char}(A)$ is defined over $\Q$, so it has a natural Galois action by $\operatorname{Gal}(\C/\Q)$. For any map $f:A\rightarrow B$ between abelian varieties, the induced map $f^\natural:\operatorname{Char}(B)\rightarrow \operatorname{Char}(A)$ is a map defined over $\Q$ (by abuse of notation we denote the map by the same symbol as its de Rham counterpart). Because $\operatorname{DR}(\M^\bullet)$ is a $\Q$-constructible complex, its cohomology support loci are defined over $\Q$. 
    
    We do induction on dimensions of abelian varieties. For simple abelian varieties, we need to show an isolated point has all of its Galois conjugates being unitary. Let $Z\subset S_m^k(\operatorname{DR}(\M^\bullet))\subset \operatorname{Char}(A)$ be an isolated point, so it's defined over $\bar{\Q}$. Then all of its Galois conjugates are obviously isolated points of $ S_m^k(\operatorname{DR}(\M^\bullet))\subset \operatorname{Char}(A)$. By \theoremref{thm:unitarytranslates}, they all represent unitary local systems. This argument proves the theorem if $Z$ is an isolated point in cohomology support loci.
    
    Now let $Z\subset S_m^k(\operatorname{DR}(\M^\bullet))\subset \operatorname{Char}(A)$ be an irreducible component of positive dimension. By \theoremref{thm:unitarytranslates}, $Z$ is a unitary translate of a linear subvariety in $\operatorname{Char}(A)$. Then there is a smaller dimensional abelian variety $B$ and a map $f:A\rightarrow B$ between abelian varieties, such that $\dim f^\natural(\operatorname{Char}(B)) +\dim Z=\dim \operatorname{Char}(B)$, therefore $f^\natural(\operatorname{Char}(B))\cap Z$ is non-empty. We have the following by projection formula (see \cite[Lemma 1.16]{torsionpoints}):
    $$(f^\natural)^{-1} S_m^k(\operatorname{DR}\M^\bullet) = S_m^k(\operatorname{DR}f_{+}\M^\bullet).$$
    By induction, $S_m^k(\operatorname{DR}f_{+}\M^\bullet)$ contains a point $\chi$ defined over $\bar{\Q}$, such that all of its Galois conjufares are unitary. Then we pull it back to $A$ and get a local system, still denoted by $\chi\in Z$, and all of its Galois conjugates are unitary.

\end{proof}
Take a direct sum of all the Galois conjugates of such $\chi$, we get a unitary local system defined over $\Q$ denoted by $V_{\chi}$, and we will view it as a $\Q$-VHS.

\section{Linearity of cohomology support loci on Dolbeault moduli space}
In this section, we prove the cohomology support loci of $\operatorname{FM}_A(\M)\subset A^\natural$ map to the cohomology support loci of $\fmd_A(\M)$ on $\hat{A}\times V$, under the non-abelian Hodge correspondence. More generally, we prove the same statement for any $\M^\bullet\in \operatorname{D}^b_{\MHM}(A)$. In particular, the cohomology support loci of $\fmd_A(\M)$ are linear subvarieties translated by points of the form $(L,0)\in \hat{A}\times V$. As an application, we prove on a smooth projective variety $X$, the dimensions of the cohomology of a rank one local system tensored with a perverse sheaf underlying a mixed Hodge module on $X$ can be computed by the cohomology of its Dolbeault complex. 

Recall the definition of cohomology support loci in \definitionref{def:support loci definition}. By base change, equivalently we can define the cohomology support loci $S^k_m$ for de Rham and Dolbeault cohomology as the cohomology support loci of their corresponding Fourier-Mukai transforms. We will treat them as the same thing.

\begin{claim}
    To show the cohomology support loci on Dolbeault and de Rham moduli spaces are the same under the identification given by $\operatorname{NAH}:A^\natural\rightarrow \hat{A}\times V$, it suffices to show the Dolbeault cohomology support loci are linear.
\end{claim}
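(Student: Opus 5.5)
The plan is to reduce the comparison of the two families of loci to a comparison on the unitary points, where it is a Hodge-theoretic identity, and then use linearity on both sides to propagate it to the whole loci. Throughout I treat both families as cohomology support loci of the Fourier--Mukai transforms, and I take $\M^\bullet\in\operatorname{D}^b_{\MHM}(A)$.

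\textbf{Step 1 ($\C^*$-invariance).} I would first show that $S^k_m(\gr^F\M^\bullet)$ is stable under the $\C^*$-action $(L,\theta)\mapsto (L,t\theta)$. Multiplying $\gr^F_p\M\otimes\Omega^i_A\otimes L$ by $t^{p+i}$ identifies $\operatorname{Dol}(\M,L,\theta)$ with $\operatorname{Dol}(\M,L,t\theta)$: the $\theta$-part of the differential raises the form degree $i$ by one, and the $\partial_j$-part raises the grading $p$ by one. Assuming linearity, each irreducible component has the form $(L,\theta)+\IM(\hat B\times W_B)$ for a quotient $f\colon A\to B$ with $W_B=H^0(B,\Omega^1_B)$. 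Since there are finitely many components and $\C^*$ is connected, each component is $\C^*$-stable. Letting $t\to 0$ then forces $\theta\in W_B$, so every component is a translate by a point $(L,0)$ with zero Higgs field.

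\textbf{Step 2 (agreement on unitary points).} Next I compare the loci on unitary points. Such a point $(L,\nabla)\in A^\natural$ with $L\in\hat A$ satisfies $\operatorname{NAH}(L,\nabla)=(L,0)$. By \theoremref{thm:rational translates}, after adding harmless direct summands I may assume $(L,\nabla)$ is a summand of a unitary $\Q$-VHS, so that $\M\otimes(L,\nabla)$ is again a complex of mixed Hodge modules whose Hodge filtration is $F\otimes L$. The $E_1$-degeneration of the Hodge--de Rham spectral sequence then gives
\[
\dim H^k\bigl(A,\operatorname{DR}(\M^\bullet\otimes(L,\nabla))\bigr)=\dim H^k\bigl(A,\operatorname{Dol}(\M^\bullet,L,0)\bigr).
\]
Therefore $\operatorname{NAH}(S^k_m(\M^\bullet))\cap(\hat A\times\{0\})=S^k_m(\gr^F\M^\bullet)\cap(\hat A\times\{0\})=:U$.

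\textbf{Step 3 (a linear locus is determined by $U$).} By \theoremref{thm:unitarytranslates} together with \theoremref{thm:schnelllinearity}, each component of $S^k_m(\M^\bullet)$ is $\chi+\IM f^\natural(B^\natural)$ with $\chi$ unitary. Because $\operatorname{NAH}$ is a group isomorphism compatible with the maps $f^\natural$, it sends this component onto $(L_\chi,0)+\IM(\hat B\times W_B)$. So by Step 1 both sides are finite unions of sets $T_{L,\hat B}:=(L,0)+\IM(\hat B\times W_B)$, and $T_{L,\hat B}\cap(\hat A\times\{0\})=L+\hat B$. The linear subvariety is determined by the abelian subvariety $\hat B\subset\hat A$, and $\hat B\subset\hat B'$ implies $\IM(\hat B\times W_B)\subset\IM(\hat B'\times W_{B'})$, because $A\to B$ factors through $A\to B'$. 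Consequently, any union of such sets equals $\bigcup T_{L_i,\hat B_i}$, where $L_i+\hat B_i$ runs over the irreducible components of its unitary part. Applied to both sides, Step 2 yields the equality of $\operatorname{NAH}(S^k_m(\M^\bullet))$ and $S^k_m(\gr^F\M^\bullet)$.

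\textbf{Main obstacle.} The delicate point is the reduction in Step 3 that a component whose unitary part is $L+\hat B$ is exactly $T_{L,\hat B}$. If two components had the same unitary part, or one unitary part were contained in another, one might worry about extra non-unitary pieces. The inclusion-monotonicity of $\hat B\mapsto\IM(\hat B\times W_B)$ removes this worry, and it is exactly here that Step 1 is essential: it guarantees that no component misses $\hat A\times\{0\}$.
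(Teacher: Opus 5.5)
Your route is the paper's. $\C^*$-invariance plus linearity forces every Dolbeault component to be a translate by some $(L,0)$. The de Rham components are unitary translates by \theoremref{thm:unitarytranslates}. $E_1$-degeneration at unitary points then matches the two. Your Step 3 is correct and spells out what the paper leaves implicit: a finite union of sets $(L,0)+\IM(\hat B\times W_B)$ is determined by its intersection with $\hat A\times\{0\}$, by monotonicity in $\hat B$. Two local points need repair.

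\textbf{The rescaling in Step 1 is wrong.} The $\partial_j$-part of the differential sends $\Omega^i_A\otimes\gr^F_p\M\otimes L$ to $\Omega^{i+1}_A\otimes\gr^F_{p+1}\M\otimes L$, so it raises \emph{both} indices. Multiplying by $t^{p+i}$ therefore scales this part by $t^2$ and the $\theta$-part by $t$, which is not a Dolbeault differential. Use $t^{i-p}$ instead. This intertwines $\operatorname{Dol}(\M,L,\theta)$ with $\operatorname{Dol}(\M,L,t\theta)$, which is exactly the $\C^*$-equivariance of $\gr^F\M$ on $T^*A$ that the paper invokes.

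\textbf{Step 2 misuses \theoremref{thm:rational translates}.} That theorem only provides \emph{some} point in each component of the de Rham loci that is defined over $\bar\Q$ with all Galois conjugates unitary. It does not let you make an \emph{arbitrary} unitary $(L,\nabla)$ a summand of a unitary $\Q$-VHS. Indeed, a character with transcendental values is never a summand of a finite-rank $\Q$-local system, since monodromy eigenvalues of such a system are algebraic. As written, you therefore get the dimension equality only at these special points.

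That gives one inclusion but not the other. Special points are dense in the unitary part of each de Rham component, so Step 3 still yields $\operatorname{NAH}(S^k_m(\M^\bullet))\subseteq S^k_m(\gr^F\M^\bullet)$. The reverse inclusion, however, would need the unitary part $L+\hat B$ of every Dolbeault component to contain a special point. Nothing guarantees this a priori, because $\gr^F\M$ carries no $\Q$-structure.

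The fix is what the paper does. Apply $E_1$-degeneration directly to the unitary twist $\M^\bullet\otimes(L,\nabla)$, regarded as a complex (mixed) Hodge module with filtration $F\otimes L$, at every unitary point; compare \propositionref{prop:C-star invariance}(b) and the paper's appeal to ``$E_1$-degeneration \ldots for unitary local systems tensored with $\M^\bullet$''. With that substitution and the corrected weight, your argument is complete.
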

\begin{proof}
    By definition of $\fmd_A(\M)$, its cohomology support loci is invariant under the $\C^*$-action on the vertical direction of $\hat{A}\times V$. If linearity holds true on $\hat{A}\times V$, then $\C^*$-invariance forces the linear subvarieties to be translated by points of the form $(L,0)\in \hat{A}\times V$, and these points correspond to unitary local systems under non-abelian Hodge correspondence. We know the de Rham cohomology support loci are unitary translates of linear subvarieties. By $E_1$-degeneration of the Hodge-de Rham spectral sequence for unitary local systems tensored with $\M^\bullet$. So linearity of $S^k_m(\gr^F\M)$ and $\C^*$ invariance of it force $S^k_m(\gr^F\M)$ to be translates by points of the form $(L,0)$, and $E_1$-degeneration of Hodge-de Rham spectral sequence plus unitarity of $S^k_m(\M)$ implies linear subvarieties appearing on the Dolbeault moduli space must be identified with the linear subvarieties on the de Rham moduli space, under the non-abelian Hodge correspondence.

\end{proof}

Now the question is to prove linearity for cohomology support loci on $\hat{A}\times V$. In \cite{holonomicDmodules} this is proved on $A^\natural$ for holonomic $\D$-modules, where it used the fact that the bi-algebraic subsets in $A^\natural$ and $\operatorname{Char}(A)$ are exactly translates of linear subvarieties. The idea here is different, because a priori Dolbeault cohomology is not directly related to de Rham cohomology. Our main idea is to do induction on dimensions of abelian varieties, and linearity for pure objects is enough to imply linearity for arbitrary objects in the derived category:
\begin{itemize}
    \item[(1)] first we prove linearity for pure Hodge modules; we use the convolution product that appeared in \cite{kramergenericvanishing} and \cite{torsionpoints}, structure theorem of simple holonomic $\D$-modules \cite{holonomicDmodules}, and the decomposition theorem for pure Hodge modules to further decompose the cohomology support loci of a pure Hodge module into smaller pieces. Then we get some pure Hodge modules that are essentially pullback from abelian varieties with smaller dimension, then we can apply induction.

    \item[(2)] then we prove linearity for mixed objects, by induction on the length of weight filtration, while assuming for $\operatorname{D}^b_{\MHM}$ of quotient abelian varieties, linearity is true; we produce proper linear subvarieties containing the cohomology support loci, and use \lemmaref{lem:induction lemma} for induction;

    \item[(3)] finally we prove linearity for objects in $\operatorname{D}^b_{\MHM}(A)$, the key is still to produce unitary translates of proper linear subvarieties containing the cohomology support loci, and use \lemmaref{lem:induction lemma} to do induction. 
\end{itemize}

The following lemma is the Dolbeault version of \cite[Lemma 1.16]{torsionpoints}. The stability and strictness of $\operatorname{D}^b_{\MHM}$ under pushforward is important.

\begin{lemma} \label{lem:induction lemma}
    Let $\M\in \operatorname{D}^b_{\MHM}(A)$, $f: A \rightarrow B$ be a homomorphism from $A$ to a lower dimensional abelian variety $B$, and $f^\natural: B\times H^0(B,\Omega_B^1)\rightarrow A\times H^0(A,\Omega_A^1)$. Then
    $$(f^\natural)^{-1} S_m^k(A, \gr^F\M)=S_m^k\left(B, \gr^F f_+ \M\right)$$
\end{lemma}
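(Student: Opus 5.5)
The identity should be a pointwise consequence of the first formula in \theoremref{thm:functoriality}(c), namely $\mathbf{L}(f^\natural)^*\circ\fmd_A \simeq \fmd_B\circ f_+$, combined with base change to closed points. Throughout, $f^\natural$ denotes the map $\hat{B}\times H^0(B,\Omega_B^1)\to\hat{A}\times H^0(A,\Omega_A^1)$; it is the pullback of Higgs line bundles $(L,\theta)\mapsto (f^*L,f^*\theta)$.

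\textbf{Reduction to fibres.} As recalled at the start of this section, $S^k_m(A,\gr^F\M)$ is the locus of points $x=(L,\theta)$ with $\dim H^k\bigl(\mathbf{L}i_x^*\fmd_A(\M)\bigr)\ge m$, where $i_x$ is the inclusion of the closed point. This holds because, by the base-change identification stated after Lemma 13.3 of \cite{holonomicDmodules}, the derived fibre of $\fmd_A(\M)$ at $(L,\theta)$ computes $H^\bullet(A,\operatorname{Dol}(\M,L,\theta))$. For $\M\in\operatorname{D}^b_{\MHM}(A)$ we use the double-complex version. The base change applies because $p_{23}$ is proper and flat, and we may work with the coherent sheaf $\gr^F\M$ on $A\times V$ (or a bounded complex of such).

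\textbf{Comparison.} Take a closed point $y=(M,\eta)\in\hat{B}\times H^0(B,\Omega_B^1)$ and set $x=f^\natural(y)$. Then $i_x=f^\natural\circ i_y$, so
\[
\mathbf{L}i_x^*\fmd_A(\M)\simeq \mathbf{L}i_y^*\,\mathbf{L}(f^\natural)^*\fmd_A(\M)\simeq \mathbf{L}i_y^*\fmd_B(f_+\M),
\]
where the second isomorphism is \theoremref{thm:functoriality}(c). Taking $H^k$ gives
\[
\dim H^k(A,\operatorname{Dol}(\M,f^*M,f^*\eta))=\dim H^k(B,\operatorname{Dol}(f_+\M,M,\eta)).
\]
Hence $y\in S^k_m(B,\gr^F f_+\M)$ if and only if $f^\natural(y)\in S^k_m(A,\gr^F\M)$. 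This is exactly the equality of sets $(f^\natural)^{-1}S^k_m(A,\gr^F\M)=S^k_m(B,\gr^F f_+\M)$.

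\textbf{Main obstacle.} The real content is that (c) holds for an arbitrary complex $\M\in\operatorname{D}^b_{\MHM}(A)$ and not only for a single mixed Hodge module. The proof of (c) used Laumon's formula for $\gr^F f_+$, which requires strictness of direct images. For a complex we use that $f_+$ preserves $\operatorname{D}^b_{\MHM}$, as constructed by Saito, and that $\gr^F$ of the filtered direct image is computed by Laumon's formula termwise on the filtered complex. Strictness of the filtered direct image then identifies $\gr^F$ of the cohomology objects $\mathcal{H}^i f_+\M$ with the corresponding cohomology of the right-hand side. This makes $S^k_m(B,\gr^F f_+\M)$, which is defined via the Dolbeault complex of the complex $f_+\M$, agree with the support loci of $\fmd_B(f_+\M)$.

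I would handle this step by reducing, via Saito's realization of $\operatorname{D}^b_{\MHM}(A)$ by complexes of mixed Hodge modules, to a bounded complex of mixed Hodge modules, and then applying the single-module formula termwise to get a double complex. The remaining checks are the twist conventions in the Dolbeault complex (the $\omega_A$ factor and the involution $\iota$), which are already absorbed in (c) and in Lemma 13.3 of \cite{holonomicDmodules}.
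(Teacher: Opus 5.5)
Your proposal is correct and follows essentially the paper's route. The paper only says the proof is the same as that of \cite[Lemma 1.16]{torsionpoints}, namely a projection-formula identification $\dim H^k(A,\operatorname{Dol}(\M,f^*M,f^*\eta))=\dim H^k(B,\operatorname{Dol}(f_+\M,M,\eta))$ at each point; you obtain exactly this by restricting \theoremref{thm:functoriality}(c) to closed points via base change, and you rightly point out that (c) has to be used for complexes, which the paper leaves implicit.
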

\begin{proof}
    The same as the proof of \cite[Lemma 1.16]{torsionpoints}.
\end{proof}

The significance of this lemma is, once we know if a component of $S_m^k(A,\gr^F\M)$ is contained in a proper linear subvariety, then we can conclude it is already linear, by  this lemma and induction on dimensions of abelian varieties. Therefore the main idea of our proof is to produce proper linear subvarieties (up to unitary translates) containing cohomology support loci.

We first recall a structure theorem of simple Hodge modules, which follows from \cite[Theorem 21.1]{holonomicDmodules}, see also \cite[Theorem 9.1]{HodgemodulesonTori}. In the following, $\chi(\M)$ means the Euler characteristic of the perverse sheaf $\operatorname{DR}(\M)$.

\begin{theorem} \label{thm:chi=0}
    Let $\mathcal{M}$ be a simple complex Hodge module on an abelian variety $A$. If $\chi(\M)=0$, then there is an abelian variety $B$ of dimension less than $\dim A$, a surjective morphism $f: A \rightarrow B$ with connected fibers, and a simple Hodge module $\mathcal{N}$ on $B$, such that $ \mathcal{M} \otimes_{\mathscr{O}_A}(L, \nabla_u) \simeq f^* \mathcal{N},$
    for a suitable unitary connection $(L, \nabla_u) \in A^{\natural}$. Moreover, we have $\operatorname{Supp} \mathcal{H}^0\left(\mathrm{FM}_B(\mathcal{N})\right)=B^{\natural}$ and $$ \operatorname{Supp} \operatorname{FM}_A(\mathcal{M})=(L, \nabla_u) \otimes \operatorname{im}\left(f^{\natural}: B^{\natural} \rightarrow A^{\natural}\right) . $$
\end{theorem}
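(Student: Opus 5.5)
Theorem~\ref{thm:chi=0} is essentially the structure theorem \cite[Theorem 21.1]{holonomicDmodules} for simple holonomic $\D$-modules, upgraded using \theoremref{thm:unitarytranslates} and the rigidity of Hodge modules. The proof has three steps.

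First, consider the underlying simple holonomic $\D$-module of $\M$. Since $\chi(\M)=0$, the generic rank of $\mathcal{H}^0\operatorname{FM}_A(\M)$ vanishes. By the codimension estimates of \cite[Theorem 4.1, 4.2]{holonomicDmodules}, the transform is then supported on a proper linear subvariety. Schnell's structure theorem for simple holonomic modules with $\chi=0$ applies: there is a quotient $f\colon A\to B$ with connected fibers and $\dim B<\dim A$, a simple holonomic $\mathcal{N}$ on $B$ with $\operatorname{Supp}\mathcal{H}^0\operatorname{FM}_B(\mathcal{N})=B^\natural$, and some $(L,\nabla)\in A^\natural$ with $\M\otimes(L,\nabla)\simeq f^*\mathcal{N}$. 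Base change for $\operatorname{FM}$ under smooth pullback, i.e.\ the de Rham analogue of \theoremref{thm:functoriality}(c), together with translation, gives
\[
\operatorname{Supp}\operatorname{FM}_A(\M)=(L,\nabla)^{-1}\otimes\operatorname{im}(f^\natural).
\]

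Second, show the translate can be chosen unitary. The support of $\operatorname{FM}_A(\M)$ is a union of cohomology support loci. By \theoremref{thm:unitarytranslates}, each irreducible component is a translate of $\operatorname{im}(f^\natural)$ by a unitary point. The ambiguity in $(L,\nabla)$ is exactly a point of $\operatorname{im}(f^\natural)$, so we may replace it by a unitary connection $(L,\nabla_u)$ in the same coset. Twisting $\mathcal{N}$ by the corresponding pulled-back character keeps the isomorphism $\M\otimes(L,\nabla_u)\simeq f^*\mathcal{N}$.

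Third, upgrade $\mathcal{N}$ to a simple Hodge module. Since $(L,\nabla_u)$ is unitary, $\M\otimes(L,\nabla_u)$ is again a simple complex Hodge module, as in part (a) of \theoremref{thm:functoriality}. Its underlying perverse sheaf is, up to shift, $f^{-1}$ of a simple perverse sheaf on $B$. I would recover $\mathcal{N}$ with a Hodge structure as a direct summand of $\mathcal{H}^{j}f_+\bigl(\M\otimes(L,\nabla_u)\bigr)$ in the appropriate degree $j$; by the decomposition theorem for pure complex Hodge modules this cohomology is a polarizable Hodge module. Because $f$ has connected fibers, $f_+f^*\mathcal{N}$ contains $\mathcal{N}$ tensored with the cohomology of the fiber, an abelian variety. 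Hence $\mathcal{N}$ appears as a simple summand of the top- or bottom-degree piece. Then $f^*\mathcal{N}$ with its induced Hodge structure and $\M\otimes(L,\nabla_u)$ are simple Hodge modules with isomorphic underlying $\D$-modules, so they agree up to Tate twist by uniqueness of the Hodge structure on a simple object.

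The main obstacle is this last step: making sure the Hodge structure on $\mathcal{N}$ is well defined and that the pullback matches $\M$. I would cite \cite[Theorem 9.1]{HodgemodulesonTori} for this instead of redoing it.
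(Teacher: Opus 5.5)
Your proposal is correct and is essentially the paper's argument. The paper also deduces the statement from Schnell's structure theorem \cite[Theorem 21.1]{holonomicDmodules} for simple holonomic $\D$-modules, noting that everything stays inside the semisimple category of complex Hodge modules. You make explicit the two upgrades it leaves implicit: moving the translate to a unitary point via \theoremref{thm:unitarytranslates}, and giving $\mathcal{N}$ a Hodge structure through the decomposition theorem applied to $f_+\bigl(\M\otimes(L,\nabla_u)\bigr)$.

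One harmless slip: since $\fm_A\bigl(\M\otimes(L,\nabla)\bigr)\simeq\mathbf{L}t_{(L,\nabla)}^*\fm_A(\M)$, the isomorphism $\M\otimes(L,\nabla)\simeq f^*\mathcal{N}$ gives $\operatorname{Supp}\fm_A(\M)=(L,\nabla)\otimes\operatorname{im}(f^\natural)$, as in the statement, not $(L,\nabla)^{-1}\otimes\operatorname{im}(f^\natural)$.
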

This theorem follows from the proof of \cite[Theorem 21.1]{holonomicDmodules} when the simple holonomic $\D$-module underlies a Hodge module, plus the fact that the category of complex Hodge modules is semi-simple, and a simple complex Hodge module is also a simple holonomic $\D$-module. It tells us that Hodge modules with zero Euler characteristic are essentially pullbacks from lower dimensional abelian varieties up to simple direct summands and unitary translations. Hence we can do induction on dimensions of abelian varieties, using functoriality of the Fourier transform \theoremref{thm:functoriality} and \lemmaref{lem:induction lemma}.

Now we prove some easy properties of $S^k_m(\gr^F\M)$.

\begin{proposition} \label{prop:C-star invariance}
    Let $\M$ be a mixed Hodge module on $A$. The cohomology support loci $S^k_m(\gr^F\M)$ satisfy the following properties.
    \begin{itemize}
        \item[(a)] They are invariant under the $\C^*$-action on $V$ and every irreducible component of $S^k_m(\gr^F\M)$ intersects $\hat{A}\times \{0\}$ in a non-empty set;
        \item[(b)] The intersection $S^k_m(\gr^F\M) \cap (\hat{A}\times \{0\})$ corresponds to the unitary points in $S^k_m(\M)$ under the non-abelian Hodge correspondence, and are unions of translates of subtori of $\hat{A}\times\{0\}$;
        \item[(c)] For $k\neq 0$, $S^k_m(\gr^F\M)$ are proper subvarieties of $\hat{A}\times V$.
    \end{itemize} 
\end{proposition}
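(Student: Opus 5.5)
The plan is to prove the three parts in order. Part (a) comes from the grading on $\gr^F_\bullet\M$. Part (b) comes from $E_1$-degeneration at $\theta=0$ together with \theoremref{thm:unitarytranslates}. Part (c) follows from (a), (b) and generic vanishing for holonomic $\D$-modules.

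\emph{Part (a).} I would first show that $\operatorname{Dol}(\M,L,t\theta)$ and $\operatorname{Dol}(\M,L,\theta)$ have cohomology of the same dimension for every $t\in\C^*$.
\begin{itemize}
\item In the Dolbeault differential, the term $\theta\wedge\omega\otimes m$ preserves the grading index $p$ of $\gr^F_p\M$, while $\sum_j dz_j\wedge\omega\otimes\partial_j m$ raises it by one.
\item Rescale $\Omega^i_A\otimes\gr^F_p\M\otimes L$ by $t^{-p}$. Conjugating the differential $\theta\wedge+\partial$ by this rescaling gives $\theta\wedge+t\,\partial$, which equals $t\,(t^{-1}\theta\wedge+\partial)$.
\item Multiplying a differential by a nonzero scalar does not change cohomology. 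Hence $\operatorname{Dol}(\M,L,\theta)$ and $\operatorname{Dol}(\M,L,t^{-1}\theta)$ have isomorphic cohomology, so $S^k_m(\gr^F\M)$ is $\C^*$-invariant.
\item Equivalently, $\fmd_A(\M)$ is $\C^*$-equivariant for the weight action on $V$, because $\gr^F\M$ is a graded $\operatorname{Sym}T_A$-module.
\end{itemize}
The locus $S^k_m(\gr^F\M)$ is closed by semicontinuity, since it is the $m$-th cohomology support locus of $\fmd_A(\M)$. The group $\C^*$ is connected, so it preserves each irreducible component. A component is closed and contains the orbit $\{(L,t\theta)\}$ of each of its points, so it contains the limit point $(L,0)$ of that orbit. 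This proves (a).

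\emph{Part (b).} For $L\in\hat A$, let $\nabla_u$ be its unique unitary flat connection; non-abelian Hodge sends $(L,\nabla_u)$ to $(L,0)$. At $\theta=0$, the complex $\operatorname{Dol}(\M,L,0)$ is $\bigoplus_p\gr^F_p\operatorname{DR}(\M\otimes(L,\nabla_u))$. Here $\M\otimes(L,\nabla_u)$ is a complex mixed Hodge module with filtration $F\otimes L$, as in the proof of \theoremref{thm:functoriality}(a). If one prefers to stay over $\Q$, one can tensor with the unitary $\Q$-VHS $V_\chi$ from \theoremref{thm:rational translates} and pass to a direct summand. Saito's $E_1$-degeneration then gives
\[
\dim H^k\bigl(A,\operatorname{Dol}(\M,L,0)\bigr)=\dim H^k\bigl(A,\operatorname{DR}(\M\otimes(L,\nabla_u))\bigr).
\]
So $S^k_m(\gr^F\M)\cap(\hat A\times\{0\})$ is exactly the set of unitary points of $S^k_m(\M)$ under $\operatorname{NAH}$.

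By \theoremref{thm:schnelllinearity} and \theoremref{thm:unitarytranslates}, $S^k_m(\M)$ is a finite union of translates $(L_0,\nabla_0)\otimes\IM f^\natural$ with $(L_0,\nabla_0)$ unitary. The unitary points of such a translate are $(L_0,\nabla_0)$ tensored with the unitary points of $\IM f^\natural$, which are $\IM(\hat f\colon\hat B\to\hat A)$. Therefore the intersection is a union of translates of subtori of $\hat A\times\{0\}$.

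\emph{Part (c).} Suppose $S^k(\gr^F\M)=\hat A\times V$ for some $k\neq0$. Then it contains $\hat A\times\{0\}$, so by (b) every unitary rank one connection lies in $S^k(\M)$. Under Riemann--Hilbert, the unitary characters $(S^1)^{2g}$ are Zariski dense in $\operatorname{Char}(A)\cong(\C^*)^{2g}$. Since $S^k(\M)$ is Zariski closed, this forces $S^k(\M)=A^\natural$. That contradicts Schnell's generic vanishing $\codim S^k(\M)\geq 2|k|$ for holonomic $\D$-modules \cite[Theorem 4.1]{holonomicDmodules}.

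The main obstacle I expect is the $E_1$-degeneration step in (b) for arbitrary unitary $(L,\nabla_u)$, which need not be defined over $\Q$. My first approach is to use complex mixed Hodge modules \cite{MHMproject}. Failing that, I would argue via \theoremref{thm:rational translates}: check the equality at Galois-unitary $\bar\Q$-points of each linear piece, and extend to the whole piece using the linear structure.
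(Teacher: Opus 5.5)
Your proposal is correct and follows the paper's proof essentially step for step. Part (a) gets $\C^*$-invariance from the grading of $\gr^F_\bullet\M$, and your rescaling by $t^{-p}$ just makes explicit the $\C^*$-equivariance of $\fmd_A(\M)$. Part (b) combines $E_1$-degeneration for $\M\otimes(L,\nabla_u)$ at $\theta=0$ with \theoremref{thm:schnelllinearity}. Part (c) transfers Schnell's properness of $S^k(\M)$ for $k\neq 0$ to $\hat{A}\times\{0\}$.

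On the obstacle you flag, the paper likewise simply cites $E_1$-degeneration for complex Hodge modules. Your fallback via \theoremref{thm:rational translates} would not work as stated: agreement of the two upper semicontinuous dimension functions at special points does not propagate to all of $\hat{A}$. You can instead stay inside Saito's theory by using $\mathbb{R}$-coefficients and tensoring with the weight-zero polarizable real VHS whose complexification is $\chi\oplus\bar{\chi}$. The Hodge--de Rham spectral sequence of that twist is the direct sum of those for $\chi$ and $\bar{\chi}$, so both degenerate.
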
   
\begin{proof}
    Part $(a)$: the $\C^*$-invariance is because $\fmd_A(\M)$ is the Fourier transform of the graded module $\gr^F_\bullet \M$. By $\C^*$-invariance, each irreducible component of $S^k_m(\gr^F\M)$ must have a non-empty intersection with $\hat{A}\times\{0\}$.

    Part $(b)$: $\hat{A}\times \{0\}\subseteq \hat{A}\times V$ corresponds to unitary connections in $A^\natural$ under non-abelian Hodge correspondence \cite{Simpson1992}. The Dolbeault complex $\operatorname{Dol}(\M,L,0)$ is the same as the direct sum of graded de Rham complexes when the Higgs field is zero. By $E_1$-degeneration of the Hodge-de Rham spectral sequence for complex Hodge modules, a unitary point in $S^k_m(\M)$ if and only if its image under the correspondence is inside $S^k_m(\gr^F\M) \cap (\hat{A}\times \{0\})$. Being translates of subtori is a direct consequence of the above discussion and \theoremref{thm:schnelllinearity}.
    
    Part $(c)$: We also know $S^k_m(\M)$ are proper subvarieties by \cite[Proposition 19.2]{holonomicDmodules} for $k\neq 0$, combining with the part $(a)$ and $(b)$, we know $S^k_m(\gr^F\M)$ on the Dolbeault moduli space must be proper subvarieties when $k\neq 0$.

\end{proof}

\begin{remark}
    In the remaining proofs of this paper, we will frequently take take tensor product $\M^\bullet\otimes (L_\chi,\nabla)$ of a complex of mixed Hodge modules with a unitary rank one local system $(L_{\chi},\nabla)$, and we will call it a unitary translate of the complex of mixed Hodge modules. However, since we only work with $\Q$-mixed Hodge modules, to stay in this category, we can only take tensor product with $\Q$-VHS. Therefore to replace unitary translates, we use \theoremref{thm:rational translates}. By choosing an appropriate unitary connection $(L_{\chi},\nabla)$, there's a unitary $\Q$-VHS $(V_\chi,\nabla)$ containing $(L_{\chi},\nabla)$ as a direct summand. Since all of our statements hold true under taking direct summands, adding extra summands in the proof doesn't affect our conclusion. We will also call $\M^\bullet \otimes (V_\chi,\nabla)$ a unitary translate of $\M^\bullet$, although $\M^\bullet \otimes (L_\chi,\nabla)$ is only a direct summand of it. 
\end{remark}

We first prove equality of cohomology support loci under Simpson correspondence, for pure complex Hodge modules. Since we work in the complex setting here, by a unitary translate we just mean tensoring with a unitary rank one flat connection. 

\begin{theorem}[Pure case of \theoremref{thm: main theorem}] \label{thm:pure case}
    Let $\M$ be a pure Hodge module on an abelian variety $A$ and for all $k,m\in \Z$, $S^k_m(\gr^F\M)$ is linear in $\hat{A}\times V$, and are translates by points of the form $(L,0)\in \hat{A}\times V$. Moreover, the image of  $\operatorname{NAH}|_{S^k_m(\M)}: S^k_m(\M)\rightarrow \hat{A}\times V$ is equal to $S^k_m(\gr^F\M)$.
\end{theorem}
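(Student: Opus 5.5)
We argue by induction on $\dim A$. By the Claim at the start of this section, it suffices to prove that each $S^k_m(\gr^F\M)$ is linear. \propositionref{prop:C-star invariance} then forces the translates to be by points $(L,0)$, and identifies the components with $\operatorname{NAH}(S^k_m(\M))$ through $E_1$-degeneration at unitary points. By the semisimplicity of pure Hodge modules, $\M$ is a direct sum of simple Hodge modules. Since $\fmd_A$ is additive and the loci $S^k_m$ of a direct sum are determined by the loci of the summands (a finite union of sums $S^{k}_{m_1}\cap\cdots$), linearity for simple modules implies linearity for $\M$. Hence we may assume $\M$ is simple. \lemmaref{lem:induction lemma} combined with the induction hypothesis shows that any component of $S^k_m(\gr^F\M)$ lying in a proper linear subvariety, translated by some $(L,0)$, is itself linear. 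To apply this to a translate, we first twist $\M$ by the unitary $(L,0)$ using \theoremref{thm:functoriality}(a).

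\textbf{Case $\chi(\M)=0$.} By \theoremref{thm:chi=0}, after a unitary twist we have $\M\otimes(L,\nabla_u)\simeq f^*\mathcal{N}$ with $f:A\to B$ smooth and $\dim B<\dim A$. Then \theoremref{thm:functoriality}(a),(c) give $\fmd_A(\M)\simeq t^*\mathbf{R}f^\natural_*\fmd_B(\mathcal{N})$ up to shift. Because $f^\natural$ is a closed embedding of $\hat B\times W$ as a linear subvariety, the loci of $\M$ are translates of the images of the loci of $\mathcal{N}$. Those are linear by induction, so the loci of $\M$ are linear.

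\textbf{Case $\chi(\M)>0$.} For $k\neq 0$, \propositionref{prop:C-star invariance}(c) shows that $S^k_m(\gr^F\M)$ is proper. We must show that each such component lies in a proper linear subvariety. We use the convolution $\M_d=m_+(\M^{\boxtimes d})$. \theoremref{thm:functoriality}(c), applied to $m$ with $m^\natural$ the diagonal, gives $\fmd_A(\M_d)\simeq \Delta^*\bigl(\fmd_A(\M)^{\boxtimes d}\bigr)$. Together with the Künneth formula (\corollaryref{cor:kunnethDolbeault}), this shows that the support loci of $\M_d$ are controlled by products of those of $\M$. On the other hand, $\M_d$ is pure, so the decomposition theorem (\theoremref{thm:functoriality}(d)) splits $\fmd_A(\M_d)$ into the transforms of the simple summands of the $\mathcal{H}^i(m_+\mathcal{N}_d)$. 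The summands with $\chi=0$ are handled by the previous case and have linear loci.

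Following \cite{kramergenericvanishing} and \cite{torsionpoints}, the plan is as follows. If a component $Z\subset S^k(\gr^F\M)$ with $k\neq0$ were not contained in a proper linear subvariety, then for large $d$ the Künneth formula would place points of nonzero-degree cohomology for $\M_d$ along the sums $Z+\cdots+Z$. These sums fill a translate of the linear span of $Z$. That span contradicts the fact that nonzero-degree loci of the $\chi>0$ summands are proper and governed, by induction on the Euler characteristic, by the already understood $\chi=0$ summands. We therefore conclude that $Z$ lies in a proper linear subvariety $(L,0)+\IM f^\natural$, and \lemmaref{lem:induction lemma} finishes the argument. For $k=0$ we use the duality (\theoremref{thm:functoriality}(b)) together with the $k\neq0$ loci: away from $\bigcup_{k\neq0}S^k$, the sheaf $\mathcal{H}^0$ is locally free, so the jump loci of $\mathcal{H}^0$ are governed by those of the other degrees and are again linear.

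I expect the main obstacle to be this convolution step. It requires making precise that a nonlinear or nondegenerate component of the loci of $\M$ forces, through the Künneth formula and the diagonal restriction, a component of the loci of $\M_d$ that is too large. It also requires locating that component in a specific simple summand of $m_+\mathcal{N}_d$, which needs a careful bookkeeping of the degrees shifted by the decomposition theorem.
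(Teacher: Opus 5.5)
Your reduction to simple $\M$, your $\chi(\M)=0$ case and the overall induction match the paper. The step that carries the theorem has a genuine gap, and the mechanism you propose for it does not work. That step is showing, for $k\neq0$ and $\chi(\M)>0$, that every component of $S^k(\gr^F\M)$ lies in a unitary translate of a proper linear subvariety.

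You correctly write $\fmd_A(\M_d)\simeq\Delta^*\bigl(\fmd_A(\M)^{\boxtimes d}\bigr)$. But this means the Künneth formula evaluates all $d$ factors at the \emph{same} point $(L,\theta)$, so $S^{kd}(\gr^F\M_d)=\bigcup_{a_1+\cdots+a_d=kd}\bigcap_i S^{a_i}(\gr^F\M)$. Convolution on $A$ becomes a pointwise tensor product on $\hat{A}\times V$. Sums $Z+\cdots+Z$ of loci would come from tensor powers on $A$, not from $m_+(\M^{\boxtimes d})$. So there is no ``span of $Z$'' to contradict. Your claim that the nonzero-degree loci of the $\chi>0$ summands are governed, ``by induction on the Euler characteristic'', by the $\chi=0$ summands has no argument behind it.

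The paper's route needs no contradiction. By propagation (\corollaryref{cor:propagationDolbeault}), the union above equals $S^k(\gr^F\M)$ exactly. Dolbeault cohomology of a single Hodge module lives in degrees $[-g,g]$. Hence for $k>0$ and $d\gg0$, the decomposition $\gr^F\M_d\simeq\bigoplus_q\gr^F\mathcal{H}^q(\M_d)[-q]$ gives $S^{kd}(\gr^F\M_d)=\bigcup_{q\geq1}S^{kd-q}(\gr^F\mathcal{H}^q(\M_d))$. The $q=0$ part, where the $\chi>0$ summands sit, drops out for degree reasons alone.

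The ingredient you are missing is the de Rham-side input \cite[Lemma 1.20]{torsionpoints}. For $q\neq0$ the modules $\mathcal{H}^q(\M_d)$ have proper cohomology support loci, hence $\chi=0$. Then by \theoremref{thm:chi=0} each simple summand is a unitary twist of a pullback from a lower-dimensional abelian variety. The $\chi=0$ case and induction make each $S^{kd-q}(\gr^F\mathcal{H}^q(\M_d))$ linear, so $S^k(\gr^F\M)$ is linear directly. Without that lemma, saying the $\chi=0$ summands are handled does not help, because you have no control over which summands actually contribute in degree $kd$.

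Your $k=0$ step is fine in substance, though the paper uses semicontinuity and constancy of the Dolbeault Euler characteristic rather than duality. Off $\bigcup_{k\neq0}S^k$ one has $h^0=\chi(\M)>0$, and $S^0_m\subseteq\bigcup_{k\neq0}S^k$ for $m>\chi(\M)$. But this rests on the unproved $k\neq0$ case.
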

\begin{proof}
    The cohomology support loci of $S^k_m(\M)$ are unitary translates of linear subvarieties is proved in \theoremref{thm:unitarytranslates}. A unitary connections $(L,\nabla_u)$ is mapped to the same line bundle with zero Higgs field $(L,0)$ by the non-abelian Hodge correspondence \cite{Simpson1992}. As observed before, it suffices to prove $S^k_m(\gr^F\M)$ are linear. We prove this by induction on the dimensions of abelian varieties. This is obviously true for zero dimensional abelian varieties. Given a Hodge module $\M$ on an abelian variety $A$, now we suppose for lower dimensional abelian varieties and pure Hodge modules on them, the theorem is already proved. 

    Since the category of pure Hodge modules is semi-simple, without loss of generality we can assume $\M$ is a simple Hodge module. If $\chi(\M)=0$, then $\M$ is isomorphic to the pullback of a simple Hodge module from a lower dimensional abelian variety tensored with a unitary connection by \theoremref{thm:chi=0}. Then the result follows from induction and the functoriality results in \theoremref{thm:functoriality} part $(a)$ and part $(c)$. Now let $\chi(\M)\neq 0$. 

    First consider the case $k=0$. Because $\chi(\M)\neq 0$ and $\M$ is simple, by the result of \cite{holonomicDmodules}, $\mathcal{H}^0(\fm_A(\M))$ is torsion free and is supported on the whole $A^\natural$. Because $\fmd_A(\gr^F\M)$ is a deformation of $\fm_A(\M)$ along the twistor space (see \cite[section 12]{holonomicDmodules}), by semi-continuity $\mathcal{H}^0(\fmd_A(\M))$ is then supported on the whole $\hat{A}\times V$. By \propositionref{prop:C-star invariance}, we know $S^k(\gr^F\M)$ and $\mathcal{H}^k(\fmd_A(\M))$ are also supported on proper subvarieties of $\hat{A}\times V$ for $k\neq 0$. From base change we have the following, 
    \begin{align*}
        \bigcup_{k\geq 0} S^k(\M) & = \bigcup_{k\geq 0} \operatorname{Supp}(\mathcal{H}^k\fm_A(\M)) = A^\natural, \\
        \bigcup_{k\geq 0} S^k(\gr^F\M) & = \bigcup_{k\geq 0} \operatorname{Supp}(\mathcal{H}^k\fmd_A(\M)) = \hat{A}\times V.
    \end{align*} 
    These imply $S^0(\M)=\operatorname{Supp}\mathcal{H}^0\fm_A(\M) =A^\natural$ and $S^0(\gr^F\M)=\operatorname{Supp}\mathcal{H}^0\fmd_A(\M)=\hat{A}\times V$ must both equal to the whole parameter spaces. This proves that the support of $S^0(\gr^F\M)$ is identified with the support of $S^0(\M)$ under the non-abelian Hodge correspondence.

    Now consider the case when $k\neq 0$. We assume $k>0$, and the case $k<0$ can be treated analogously. Let $m:A^d=A\times \cdots\times A\rightarrow A$ be the multiplication map, and let $\M_d\defeq m_+(\M\boxtimes\cdots\boxtimes\M)$ which is a direct sum of shifts of Hodge modules by the decomposition theorem. It follows from the Künneth formula for Dolbeault complexes \corollaryref{cor:kunnethDolbeault} that
    \begin{align*}
        \bigcup_{a_1+\cdots+a_d=k d}\left(\bigcap S^{a_i}(\gr^F\M)\right) & =S^{k d}\left(\gr^F\M_d\right).
    \end{align*}
    \begin{lemma} \label{lem:Mdlemma}
        $S^k(\gr^F\M)=S^{kd}(\gr^F\M_d)$.
    \end{lemma}
    \begin{proof}
        By the propagation property (see \cite{mellinpropagation} and \corollaryref{cor:propagationDolbeault}), we know $$\emptyset=S^{g+1}\subseteq S^g\subseteq \cdots\subseteq S^1\subseteq S^0\supseteq S^{-1}\supseteq \cdots \supseteq S^{-g}\supseteq S^{-g-1}=\emptyset$$ since $\operatorname{Dol}(\M,L,\theta)$ is a complex of quasi-coherent algebraic sheaves concentrated in negative degrees $-g,\cdots,0$. Because $a_1+\cdots a_d=kd$, then one of the $a_i$ is greater than or equal to $k$, hence $\bigcap_{a_1+\cdots+a_d=kd} S^{a_i}\subset S^k$. $S^k\subseteq \bigcup_{a_1+\cdots+a_d=k d}\left(\bigcap S^{a_i}(\M)\right) $ because we can let all $a_i$ to be equal to $k$. Therefore, we have proved that $S^k(\gr^F\M)=S^{kd}(\gr^F\M_d)$. 
    \end{proof}

    For $d\gg 0$, we have the following: 
    \begin{equation}\label{eq:decompose support loci}
         S^{kd}(\gr^F\M_d) = \bigcup_{q\geq 1} S^{kd-q} (\gr^ F\mathcal{H}^q(\M_d)).
    \end{equation}
    It's because of the decomposition theorem for $\gr^F\M_d$ by \theoremref{thm:functoriality} part $(d)$, and the fact that the Dolbeault complexes only have cohomologies in degrees $-g,\cdots,g$. When $k>0$ and $d$ is very large, we can take $q\geq 1$.

    \begin{lemma}
        The cohomology Hodge modules $\mathcal{H}^q(\M_d)$ for $q\neq 0$ are direct sums of pullbacks of Hodge modules from smooth maps to lower dimensional abelian varieties, up to unitary translates. 
    \end{lemma}
    \begin{proof}
        By \cite[Lemma 1.20]{torsionpoints}, the cohomology support loci of $\mathcal{H}^q(\M_d)$ are proper linear subvarieties, for $q\neq 0$. Then $\chi(\mathcal{H}^q(\M_d)) =0$, and each simple summand of $\mathcal{H}^q(\M_d)$ must be pullback Hodge modules from lower dimensional abelian varieties composed with a unitary translation, by \theoremref{thm:chi=0}.
    \end{proof}
    
    With this lemma, using functoriality \theoremref{thm:functoriality}, \lemmaref{lem:induction lemma}, by induction, and  \eqref{eq:decompose support loci}, we know $S^{kd}(\gr^F\M_d)$ is linear (for $k\neq 0$), therefore $S^k(\gr^F\M)$ is also linear by \lemmaref{lem:Mdlemma}.

    Now consider the general cohomology support loci $S^k_m(\gr^F\M)$ where $m\geq 2$. Consider the case $k\neq 0$ first. Since $k\neq 0$, we know they are proper subvarieties of $\hat{A}\times V$ by \propositionref{prop:C-star invariance}. If $S^k(\gr^F \M) = S^k_m(\gr^F\M)$, then we are already done. If not, then $S^k_m(\gr^F \M)\subsetneq S^k(\gr^F \M)$ is contained in a union of unitary translates of proper linear subvarieties of $\hat{A}\times V$, by what we have proved before about $S^k(\gr^F\M)$. Say an irreducible component of $S^k_m(\gr^F\M)$ is contained in a proper linear subvariety $(L,0)+\hat{B}\times H^0(B,\Omega_B^1)$. By \theoremref{thm:functoriality} part $(a)$, we can translate $\fmd_A(\M)$ by a point of the form $(L,0)$, so that we make the this translate of linear subvariety centered at the origin. Now this irreducible component of $S^k_m(\gr^F\M)$ is contained in $ \operatorname{Image}(\hat{B}\times H^0(B,\Omega_B^1)\rightarrow \hat{A}\times H^0(A,\Omega_A^1))$, we can apply \lemmaref{lem:induction lemma} and induction hypothesis on $B$ to conclude that this component of $S^k_m(\gr^F\M)$ is a unitary translate of a linear subvariety. 

    Consider the case $k=0$. If $S^0_m(\gr^F\M)=S^0(\gr^F\M)=S^0_{\chi(\M)}(\gr^F\M)$, we are done. If they are not equal, we have $m>\chi(\M)$. By the deformation invariance of Euler characteristic of the Dolbeault cohomology, we know that if at a point $(L,\theta)\in \hat{A}\times V$, the Dolbeault cohomology has $\dim H^0\geq m>\chi(\M)$, then one of its cohomologies $H^k$ will also have non-zero dimension for some $k\neq 0$, which means $S^0_m(\gr^F\M)\subseteq \cup_{k\neq 0} S^k(\gr^F\M)$. Now we proceed as in the paragraph above using induction and \lemmaref{lem:induction lemma}. The theorem is proved.
    
\end{proof}

We finish the proof of \theoremref{thm: main theorem}. Note that here and in the rest of this paper we work with derived category of $\Q$-mixed Hodge modules, since a complex version of it does not exist yet. By a unitary translate, we mean from now in the rest of this paer by taking tensor product with $V_\chi = \bigoplus_{g\in \operatorname{Gal}(\C/\Q)} L_{g\cdot \chi}$, where $L_\chi$ is a rank one flat connection given by \theoremref{thm:rational translates}. The key idea is to find proper linear subvarieties containing the cohomology support loci, and use \lemmaref{lem:induction lemma} to conclude that it's actually linear.

\begin{proposition}
    For $\M^\bullet\in \operatorname{D}_{\MHM}^b (A)$ on an abelian variety $A$ and for all $k,m\in \Z$, $S^k_m(\gr^F\M^\bullet)$ is linear in $\hat{A}\times V$. Moreover, the image of  $\operatorname{NAH}|_{S^k_m(\M)}: S^k_m(\M^\bullet)\rightarrow \hat{A}\times V$ is equal to $S^k_m(\gr^F\M^\bullet)$.
\end{proposition}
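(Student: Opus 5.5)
We argue by induction on $\dim A$, following the outline (2)--(3) given at the start of this section. As in the Claim, it suffices to prove that $S^k_m(\gr^F\M^\bullet)$ is linear. Once that is known, $\C^*$-invariance (the argument of \propositionref{prop:C-star invariance}(a) applies verbatim to complexes) forces each component to be a translate by a point $(L,0)$. \theoremref{thm:unitarytranslates} says the de Rham loci are unitary translates. $E_1$-degeneration at unitary points then identifies the two families of loci under $\operatorname{NAH}$.

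The engine of the proof is \lemmaref{lem:induction lemma}. Suppose an irreducible component $Z$ of $S^k_m(\gr^F\M^\bullet)$ is contained in a unitary translate $(L,0)+\IM(f^\natural)$ of a proper linear subvariety, for some $f:A\to B$ with $\dim B<\dim A$. Replace $\M^\bullet$ by its unitary translate $\M^\bullet\otimes V_\chi$, as in the Remark, and use \theoremref{thm:functoriality}(a). Then $Z\subset \IM f^\natural$, so $(f^\natural)^{-1}S^k_m(\gr^F\M^\bullet)=S^k_m(\gr^F f_+\M^\bullet)$. The latter is linear by induction, since $f_+\M^\bullet\in \operatorname{D}^b_{\MHM}(B)$. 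Hence $Z$ is linear. So the whole task is to show that every component not already known to be linear lies in such a proper subvariety.

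\emph{Step 1: a single mixed Hodge module.} We induct on the length of the weight filtration; the pure case is \theoremref{thm:pure case}. Write $0\to W_{w-1}\M\to\M\to\gr^W_w\M\to 0$. Since $\gr^F$ is exact on mixed Hodge modules, this gives a long exact sequence of Dolbeault cohomologies at each $(L,\theta)$. Let $Z$ be a component of $S^k_m(\gr^F\M)$ and take a general point of $Z$. There are two possibilities. In the first, the dimension there is accounted for by a component of some $S^{k}_{m'}$ or $S^{k\pm1}_{m'}$ of the two pieces; these are linear by the inductive hypothesis on weights, and $Z$ is a component of the corresponding degeneracy locus. In the second, $Z$ lies in the jump locus of the rank of a connecting map. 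Now compare with the de Rham side, where $S^k_m(\M)$ is linear by \theoremref{thm:schnelllinearity} and a unitary translate by \theoremref{thm:unitarytranslates}. Intersecting with $\hat A\times\{0\}$, \propositionref{prop:C-star invariance}(b) matches the two sides there, and every component of $S^k_m(\gr^F\M)$ meets $\hat A\times\{0\}$ by \propositionref{prop:C-star invariance}(a). For $k\neq0$, \propositionref{prop:C-star invariance}(c) says $Z$ is proper. For $k=0$ and $m>\chi(\M)$, the Euler characteristic argument from the pure case puts $Z$ inside $\bigcup_{k\ne0}S^k$. The remaining case $k=0$, $m\le\chi$ gives everything, since $\chi\ge0$ generically by \cite{holonomicDmodules}.

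To produce the proper linear subvariety we use the loci of the pure pieces, which are linear unitary translates. A proper component $Z$ of $S^k_m(\gr^F\M)$ must be contained in the union of the proper loci $\bigcup_{j}S^j(\gr^F\gr^W_i\M)$ with $j\neq0$. Indeed, outside those loci every weight piece has cohomology only in degree $0$, so $\M$ does as well, with generic rank $\chi$. Being irreducible, $Z$ then lies in one of these proper linear subvarieties, and \lemmaref{lem:induction lemma} applies.

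\emph{Step 2: complexes.} For $\M^\bullet\in\operatorname{D}^b_{\MHM}(A)$ we induct on the number of nonzero cohomology modules $\mathcal{H}^i\M^\bullet$, using the triangles of the canonical truncations; Dolbeault complexes commute with these since $\gr^F$ is exact. The same argument works: outside $\bigcup_{i}\bigcup_{j\ne0}S^j(\gr^F\mathcal{H}^i\M^\bullet)$, which is a finite union of proper unitary-translated linear subvarieties by Step 1, the Dolbeault cohomology of $\M^\bullet$ has locally constant dimension equal to the Euler characteristic in each degree. Hence every component of $S^k_m(\gr^F\M^\bullet)$ is either everything or lies in one of these proper subvarieties, and we conclude with \lemmaref{lem:induction lemma} and induction on $\dim A$.

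\emph{Main obstacle.} The hard step is justifying that outside the bad loci of the pieces, the cohomology of the extension has locally constant rank in each degree. Concretely, one needs that the connecting maps are zero there, which holds because the pieces' cohomology sits in degree $0$ only. At points inside the bad loci the connecting maps can be nontrivial, so one cannot read off $S^k_m$ directly. This is exactly why we only aim for containment in a proper linear subvariety and let \lemmaref{lem:induction lemma} do the rest. A further care point is applying \theoremref{thm:functoriality}(a) with the $\Q$-VHS $V_\chi$ instead of a single rank-one $L_\chi$; this is harmless since the loci of direct summands are unions.
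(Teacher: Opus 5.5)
Your proof is correct and follows the paper's skeleton: reduce to linearity via the Claim, induct on $\dim A$, and feed unitary translates of proper linear subvarieties into \lemmaref{lem:induction lemma}, with \theoremref{thm:pure case} as input. Where you differ is in how those subvarieties are produced.

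The paper inducts on the length of the weight filtration through the two-term extension $W_{d-1}\M\to\M\to\gr^W_d\M$. It treats three cases separately: $k\neq 0$, the full-support case of $S^0$, and $S^0_m$ with $m\geq 2$, the last via auxiliary jump loci $S^0_r$ of the two pieces. It then repeats this case analysis for complexes using the spectral sequence $H^p(\operatorname{Dol}(\mathcal{H}^q\M^\bullet,L,\theta))\Rightarrow H^{p+q}(\operatorname{Dol}(\M^\bullet,L,\theta))$.

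You instead take a single bad locus: $\bigcup_i\bigcup_{j\neq 0}S^j(\gr^F\gr^W_i\M)$ for a module, and $\bigcup_i\bigcup_{j\neq 0}S^j(\gr^F\mathcal{H}^i\M^\bullet)$ for a complex. Off this locus the Dolbeault cohomology has constant rank in every degree. For a module this is $\chi(\M)$ in degree $0$ and zero elsewhere; for a complex it is $\chi(\mathcal{H}^k\M^\bullet)$ in degree $k$. Hence each $S^k_m$ is either all of $\hat A\times V$ or contained in the bad locus. This buys three things:
\begin{enumerate}
\item It is uniform in $k$ and $m$.
\item It shows that jumps of $H^0$ above the generic rank are already controlled by the $j\neq 0$ loci, so no $S^0_r$ terms are needed.
\item It makes the weight-length and truncation inductions you announce superfluous. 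Step 1 only uses the pure case, and Step 2 only uses the conclusion of Step 1 for the modules $\mathcal{H}^i\M^\bullet$.
\end{enumerate}
For the same reason, the middle paragraph of Step 1 (the ``two possibilities'', the degeneracy loci, the comparison with the de Rham side) establishes nothing on its own and can be deleted. The final paragraph of Step 1 is what carries the argument.

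One caveat concerns your reason why twisting by $V_\chi$ is harmless, namely that loci of direct summands are unions. This holds only for $m=1$, since
\[
S^k_m(F_1\oplus F_2)=\bigcup_{a+b=m}S^k_a(F_1)\cap S^k_b(F_2).
\]
The paper settles this point only through its Remark, so it is not a gap relative to the paper, but the justification you give is not the right one.
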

\begin{proof}
    For a single mixed Hodge module $\M$, we use induction on the weight filtration. Assume we have proved this proposition for complexes of mixed Hodge modules on smaller dimensional abelian varieties, and for mixed Hodge modules with length of the weight filtration less than $d$.
    
    Suppose $W_{-1}\M=0$, $\gr^W_d\M\neq 0$ and $\gr^W_l=0$ for $l>d$. For $k\neq 0$, we know $S^k_m(\gr^F\M)\subset S^k(\gr^FW_{d-1}\M)\cup S^k(\gr^F\gr^W_d \M)$. By induction and what we know about pure Hodge modules, $S^k(\gr^FW_{d-1}\M)\cup S^k(\gr^F\gr^W_d \M)$ is a union of unitary translates of proper linear subvarieties, and we take unitary translates of them, apply \lemmaref{lem:induction lemma} and induction on the dimension of abelian varieties to conclude $S^k_m(\gr^F\M)$ is linear. Here we used the fact that $\operatorname{D}^b_{\MHM}$ is stable under pushforward.

    For $S^0(\gr^F\M)$, if $S^0(\gr^FW_{d-1}\M)\cup S^0(\gr^F\gr^W_d \M)$ is a union of proper linear subvarieties, then we are done as in the last paragraph. If not, then consider the following exact sequence
    \begin{align*}
        H^0(\operatorname{Dol}(W_{d-1}\M,L,\theta))) & \rightarrow H^0(\operatorname{Dol}(\M,L,\theta)))\rightarrow \\
        & \rightarrow H^0(\operatorname{Dol}(\gr^W_d\M,L,\theta)))\rightarrow H^1(\operatorname{Dol}(W_{d-1}\M,L,\theta))).
    \end{align*}
    Suppose either $S^0(\gr^FW_{d-1}\M)$ or $S^0(\gr^F\gr^W_d \M)$ has full support. Because $W_{d-1}\M$ has no Dolbeault cohomology at $H^{-1}$ and $H^1$ at a generic $(L,\theta)$ by \propositionref{prop:C-star invariance}, then at this $(L,\theta)$, $H^0(\operatorname{Dol}(\M,L,\theta)))\neq 0$. In this case we conclude that $S^0(\gr^F\M)$ has full support. 
    
    For $m\geq 2$ and if $S^0_m(\gr^F\M)\subsetneq S^0_1(\gr^F\M)$, by the long exact sequence of cohomology, a point in $S^0_m(\gr^F\M)$ must also be contained the union of some higher cohomology support loci $S^i(\gr^FW_{d-1}\M)\cup S^j(\gr^F\gr^W_d\M)\cup S^0_r(\gr^FW_{d-1}\M)\cup S^0_s(\gr^F\gr^W_d\M)$, for $i,j\neq 0$ and 
    $$S^0_r(\gr^FW_{d-1}\M)\subsetneq S^0(\gr^FW_{d-1}\M), \quad S^0_s(\gr^F\gr^W_{d}\M)\subsetneq S^0_s(\gr^F\gr^W_{d}\M).$$ 
    This is a union of unitary translates of proper linear subvarieties, then we can use apply (maybe after some unitary translates of $\M$) \lemmaref{lem:induction lemma}, and conclude $S^0_m(\gr^F\M)$ is linear for any $m\geq 1$.

    Now we consider complexes of mixed Hodge modules. For $\M^\bullet\in \operatorname{D}^b_{\MHM}(A)$, we have a spectral sequence at any $(L,\theta)\in \hat{A}\times V$:
    $$E_2^{p,q}= H^p(\operatorname{Dol}(\mathcal{H}^q\M^\bullet,L,\theta))\Rightarrow H^{p+q}(\operatorname{Dol}(\M^\bullet,L,\theta)).$$
    For $p\neq 0$, we know $S^p(\gr^F\mathcal{H}^q\M^\bullet)$ is a proper linear subvariety. If $S^0(\gr^F\mathcal{H}^{p+q}\M^\bullet) $ is a proper linear subvariety, then we know $S^{p+q}(\gr^F\M^\bullet)$ is contained in a union of proper linear subvarieties, and we can apply \lemmaref{lem:induction lemma} and induction on dimension of abelian varieties to show that $S^{p+q}(\gr^F\M^\bullet)$ is linear. 
    
    Assume in this paragraph that $S^0(\gr^F\mathcal{H}^{p+q}\M^\bullet) $ is equal to $\hat{A}\times V$, then since for $p\neq 0$ and generic $(L,\theta)$, the $p$-th Dolbeault cohomology of a single mixed Hodge module at $(L,\theta)$ is zero. Hence at a generic point $(L,\theta)$, $H^0(\operatorname{Dol}(\mathcal{H}^{p+q}\M^\bullet,L,\theta)) = H^{p+q}(\operatorname{Dol}(\M^\bullet,L,\theta))$. In this case $S^{p+q}(\gr^F\M^\bullet) = S^0(\gr^F\mathcal{H}^{p+q}\M^\bullet) = \hat{A}\times V$. 
    
    If for some $m\geq 2$, $S^{p+q}_m(\gr^F\M^\bullet)\subsetneq S^{p+q}_1(\gr^F\M^\bullet)$, then similar as before, the cohomology support loci $S^{p+q}_m(\gr^F\M^\bullet)$ should be contained in the union of $S^p_{k_p}(\gr^F\mathcal{H}^q\M^\bullet)$ for some appropriate numbers $k_p$. $S^p_{k_p}(\gr^F\mathcal{H}^q\M^\bullet)$ are proper linear subvarieties in this case because of the assumption $S^{p+q}_m(\gr^F\M^\bullet)\subsetneq S^{p+q}_1(\gr^F\M^\bullet)$. This is a union of unitary translates of proper linear subvarieties, again we can apply \lemmaref{lem:induction lemma} (maybe after some unitary translates) and by induction hypothesis, $S^{p+q}_m(\gr^F\M^\bullet)$ is a union of unitary translates of linear subvarieties.

\end{proof}

We have the following corollaries to the main theorem.
\begin{corollary}
    For a mixed Hodge module $\M$ on $A$, 
    $$\dim H^k(A,\operatorname{DR}(\M\otimes (L,\nabla))) = \dim H^k(A,\operatorname{Dol}(\M,K,\theta)),$$
    for any $k\in \Z$ and $(L,\nabla)\in A^\natural$, where $(K,\theta)\in \hat{A}\times V$ is mapped to $(L,\nabla)$ under the non-abelian Hodge correspondence.
\end{corollary}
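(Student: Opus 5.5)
The plan is to deduce the equality of dimensions from the equality of \emph{sets} $S^k_m$ for every $m$, which the preceding proposition supplies. Fix $k$ and a point $(L,\nabla)\in A^\natural$, and write $(K,\theta)=\operatorname{NAH}(L,\nabla)$. Let $d_{\mathrm{DR}}=\dim H^k(A,\operatorname{DR}(\M\otimes(L,\nabla)))$ and $d_{\mathrm{Dol}}=\dim H^k(A,\operatorname{Dol}(\M,K,\theta))$. By Definition~\ref{def:support loci definition}, $(L,\nabla)\in S^k_m(\M)$ exactly when $m\le d_{\mathrm{DR}}$, and $(K,\theta)\in S^k_m(\gr^F\M)$ exactly when $m\le d_{\mathrm{Dol}}$. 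So it suffices to show that, for every $m\ge 1$,
$$(L,\nabla)\in S^k_m(\M)\iff \operatorname{NAH}(L,\nabla)\in S^k_m(\gr^F\M).$$

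The proposition only says $\operatorname{NAH}(S^k_m(\M))=S^k_m(\gr^F\M)$ as sets of images. To get the pointwise equivalence above I need that $\operatorname{NAH}$ is a bijection $A^\natural\to\hat A\times V$. This is true: it is the $C^\infty$ isomorphism given by the rank one nonabelian Hodge correspondence. Granting bijectivity, the forward direction is immediate, since $(L,\nabla)\in S^k_m(\M)$ gives $\operatorname{NAH}(L,\nabla)\in\operatorname{NAH}(S^k_m(\M))=S^k_m(\gr^F\M)$. For the converse, if $(K,\theta)\in S^k_m(\gr^F\M)$ then $(K,\theta)=\operatorname{NAH}(x)$ for some $x\in S^k_m(\M)$, and injectivity forces $x=(L,\nabla)$.

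Taking $m=d_{\mathrm{DR}}$ in the forward direction gives $d_{\mathrm{Dol}}\ge d_{\mathrm{DR}}$, and taking $m=d_{\mathrm{Dol}}$ in the converse gives $d_{\mathrm{DR}}\ge d_{\mathrm{Dol}}$. Hence the two dimensions are equal. When one of the dimensions is $0$, the inequality needed is vacuous, so this case causes no trouble.

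The only step needing care is matching conventions. The sign conventions in $\operatorname{Dol}$, namely the involution $\iota_V$ in $\fmd_A$, must agree with how $\operatorname{NAH}$ is normalized in the main theorem, so that the relevant point really is $\operatorname{NAH}(L,\nabla)$ and not $(K,-\theta)$. Since the support loci are $\C^*$-invariant by Proposition~\ref{prop:C-star invariance}, and $-1\in\C^*$, replacing $\theta$ by $-\theta$ does not change membership in $S^k_m(\gr^F\M)$. So even this potential mismatch is harmless, and the statement follows from the main theorem applied to the single mixed Hodge module $\M$.
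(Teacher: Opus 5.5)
Your proposal is correct and takes the same approach as the paper, whose proof likewise reads the dimension equality directly off the equality $\operatorname{NAH}(S^k_m(\M))=S^k_m(\gr^F\M)$ for all $m$. You also make explicit two points the paper leaves implicit: bijectivity of $\operatorname{NAH}$ is what turns the equality of images into a pointwise equivalence, and $\C^*$-invariance makes any sign convention for $\theta$ irrelevant.
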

\begin{proof}
    This is because the cohomology support loci are mapped to each other under the correspondence, therefore the dimensions of cohomologies are the same on both de Rham and Dolbeault sides.
\end{proof}

\begin{corollary}
    Let $X$ be a smooth projective variety and $\M$ is a mixed Hodge module on $X$. Then $$\dim H^k(X,\operatorname{DR}(\M\otimes (L,\nabla))) = \dim H^k(X,\operatorname{Dol}(\M,K,\theta)),$$
    for any $k\in \Z$, where $(L,\nabla)$ is a flat line bundle on $X$ and $(K,\theta)$ is its image under the non-abelian Hodge correspondence.
\end{corollary}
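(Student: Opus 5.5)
The plan is to reduce to the abelian variety case handled by the main theorem, as in \corollaryref{cor:equality of dimension}, using the Albanese map $\operatorname{alb}\colon X\rightarrow \alb_X$. Every flat line bundle $(L,\nabla)$ on $X$ is the pullback $\operatorname{alb}^*(L',\nabla')$ of a flat line bundle on $\alb_X$, since $\operatorname{alb}^*$ induces an isomorphism on $\operatorname{Pic}^0$ and on $H^0(\Omega^1)$. The non-abelian Hodge correspondence in rank one is compatible with pullback, because the harmonic metric of a unitary-times-real-character line bundle pulls back to a harmonic metric. So $(K,\theta)=\operatorname{alb}^*(K',\theta')$, where $(K',\theta')$ is the image of $(L',\nabla')$.

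On the de Rham side, the projection formula for $\D$-modules gives
$$H^k(X,\operatorname{DR}(\M\otimes \operatorname{alb}^*(L',\nabla')))\simeq H^k(\alb_X,\operatorname{DR}(\operatorname{alb}_+\M\otimes(L',\nabla'))),$$
because $\operatorname{alb}$ is projective and $\operatorname{DR}$ commutes with proper pushforward. By \cite{SaitoMHM}, $\operatorname{alb}_+\M\in\operatorname{D}^b_{\MHM}(\alb_X)$.

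On the Dolbeault side, I need the analogous projection formula
$$\mathbf{R}\Gamma(X,\operatorname{Dol}(\M,\operatorname{alb}^*K',\operatorname{alb}^*\theta'))\simeq \mathbf{R}\Gamma(\alb_X,\operatorname{Dol}(\operatorname{alb}_+\M,K',\theta')).$$
To get it, I will use Laumon's formula for $\gr^F\operatorname{alb}_+\M$ as a coherent sheaf on $\alb_X\times W$, where $W=H^0(\alb_X,\Omega^1)\simeq H^0(X,\Omega^1_X)$. Restricting to the slice over $\theta'\in W$ via \cite[Lemma 13.3]{holonomicDmodules} and base change, exactly as in the proof of \theoremref{thm:functoriality}(c), the Dolbeault complex on $\alb_X$ becomes a derived pushforward of the corresponding object on $X$. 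The essential ingredient is the identity
$$\omega_X\otimes\gr^F\M\otimes^{\mathbf{L}}_{\A_X}\operatorname{alb}^*\A_{\alb}\otimes_{\A_{\alb}}\mathcal{O}_{\theta'}\simeq \operatorname{Dol}(\M,0,\operatorname{alb}^*\theta')[\dim X],$$
up to the involution $\iota$. It holds because, after restricting to $X\times\{\theta'\}$, the Koszul-type resolution of \cite[Lemma 13.3]{holonomicDmodules} on $T^*X$ reproduces the twisted Dolbeault differential $\theta\wedge+\sum dz_j\wedge\partial_j$. Twisting by $K'$ is then handled by the ordinary projection formula, and the shifts should cancel against the $[\dim X-\dim \alb_X]$ in Laumon's formula.

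With both projection formulas in hand, the previous corollary, applied on $\alb_X$ to the complex $\operatorname{alb}_+\M$ (which the main theorem allows), gives the equality of dimensions for every $k$.

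The main obstacle is the Dolbeault projection formula. I have to track the derived tensor over $\A_X$ versus the restriction to a fiber of $T^*X\to X$ when $\operatorname{alb}$ is not smooth, and also the sign and involution conventions. If this bookkeeping fails, the fallback is to define $\fmd_X$ relative to $\operatorname{Pic}^0(X)\times H^0(X,\Omega^1_X)$ and prove $\fmd_X(\M)\simeq\fmd_{\alb_X}(\operatorname{alb}_+\M)$ directly. This runs the diagram chase of \theoremref{thm:functoriality}(c) with $f=\operatorname{alb}$, using Laumon's formula for $\gr^F\operatorname{alb}_+\M$; base change to a point then gives the equality.
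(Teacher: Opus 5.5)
Your proposal is correct and is the paper's own argument: push $\M$ forward along $\operatorname{alb}$, use that $\operatorname{alb}_+\M\in\operatorname{D}^b_{\MHM}(\alb_X)$, and apply \theoremref{thm: main theorem} there. The paper leaves implicit the de Rham and Dolbeault projection formulas that you derive via Laumon's formula and base change (i.e.\ the analogue of \theoremref{thm:functoriality}(c) for $f=\operatorname{alb}$); that bookkeeping needs no cohomological shift at all, since with $\operatorname{Dol}$ placed in degrees $[-\dim X,0]$ the Koszul identity holds without the $[\dim X]$, and the $\dim X-\dim\alb_X$ in Laumon's formula shifts only the filtration index, which $\operatorname{Dol}$ forgets. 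As in the paper, your claim that every flat line bundle is pulled back from $\alb_X$ holds only on the identity component of the character variety (it fails on the other components when $H_1(X,\Z)$ has torsion), which is the implicit setting anyway, since $\operatorname{Dol}(\M,K,\theta)$ is only defined for $K\in\operatorname{Pic}^0(X)$.
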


\section{The extended Fourier-Mukai transform}

\subsection{Definition of extended Fourier-Mukai transform}
We describe an extension of the Fourier-Mukai transform to the case of algebraic $\mathscr{R}_A$-modules, following \cite[section 12]{holonomicDmodules}. Recall that $\mathscr{D}_A$ is generated as an $\mathscr{O}_A$-algebra by a basis of vector fields $\partial_1, \ldots, \partial_g \in H^0\left(A, \mathscr{T}_A\right)$, subject to the relations
$$
\left[\partial_i, \partial_j\right]=0 \quad \text { and } \quad\left[\partial_i, f\right]=\partial_i f .
$$
Likewise, $\mathscr{R}_A$ is generated as an $\mathscr{O}_A[z]$-algebra by $z \partial_1, \ldots, z \partial_g$, subject to
$$
\left[z \partial_i, z \partial_j\right]=0 \quad \text { and } \quad\left[z \partial_i, f\right]=z \cdot \partial_i f .
$$
$\mathscr{R}_A$ is isomorphic to $\left(p_1\right)_* \mathscr{R}_{A \times \mathbb{C} / \mathbb{C}}$, where $\mathscr{R}_{A \times \mathbb{C} / \mathbb{C}}$ denotes the $\mathscr{O}_{A \times \mathbb{C}}$-subalgebra of $\mathscr{D}_{A \times \mathbb{C} / \mathbb{C}}$ generated by $z \mathscr{T}_{A \times \mathbb{C} / \mathbb{C}}$. If $\M$ is an algebraic $\mathscr{R}_A$-module, then the associated quasi-coherent sheaf $\widetilde{M}$ on $A \times \mathbb{C}$ is naturally a module over $\mathscr{R}_{A \times \mathbb{C} / \mathbb{C}}$, and vice versa.

Now fix an algebraic $\mathscr{R}_A$-module $M$. Let $\mathscr{R}_{A \times E(A) / E(A)}$ denote the subalgebra of $\mathscr{D}_{A \times E(A) / E(A)}$ generated by $\lambda \mathscr{T}_{A \times E(A) / E(A)}$. Recall $\lambda:E(A)\rightarrow \mathbb{A}^1$ is the map to the twistor line. The following lemma tells there exists a generalized connection for the Poincaré bundle on $A\times E(A)$, therefore we can apply the idea of Fourier-Mukai transform to $\mathscr{R}_A$-modules.

\begin{lemma}[Lemma 11.5 of \cite{holonomicDmodules}]
    Let $\widetilde{P}$ denote the pullback of the Poincaré bundle to $A \times E(A)$. Then there is a canonical generalized relative connection
    $$ \widetilde{\nabla}: \widetilde{P} \rightarrow \Omega_{A \times E(A) / E(A)}^1 \otimes \widetilde{P} $$
    that satisfies the Leibniz rule $\widetilde{\nabla}(f \cdot s)=f \cdot \widetilde{\nabla} s+d_{A \times E(A) / E(A)} f \otimes \lambda s$.
\end{lemma}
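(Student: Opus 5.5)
We need to construct a canonical $\widetilde\nabla$ on $\widetilde P$ over $A\times E(A)$ satisfying the $\lambda$-twisted Leibniz rule. The plan is to build it from the universal extension of the Poincaré bundle, viewing $E(A)$ as the moduli of line bundles with $\lambda$-connection. We proceed in three steps.

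\textbf{Step 1: reduce to a cocycle problem.} Consider the relative Atiyah sequence of $P$ on $A\times\hat A$ over $\hat A$:
$$0\to \Omega^1_{A\times\hat A/\hat A}\otimes P\to J^1_{rel}(P)\to P\to 0.$$
A generalized $\lambda$-connection on the pullback $\widetilde P$ is the same as an $\mathscr O$-linear splitting of the pulled-back sequence twisted by $\lambda$. Concretely, such a connection is a map $\widetilde P\to J^1_{rel}(\widetilde P)$ whose composition with the projection to $\widetilde P$ is multiplication by $\lambda$. Equivalently, we need a class-level identity: the pullback of the relative Atiyah class $\operatorname{at}(P)\in H^1(\hat A, \mathscr O)\otimes H^0(\Omega^1_A)$ (computed after $Rp_{2*}$) must equal $\lambda$ times something that becomes trivial on $E(A)$.

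\textbf{Step 2: use the defining property of $E(A)$.} By construction, $E(A)$ is the total space of the extension whose class is $\sum_j\partial_j\otimes dz_j$, and this is exactly the relative Atiyah class of $P$ under the identification $H^1(\hat A,\mathscr O)\simeq H^0(A,\mathscr T_A)$. On $E(A)$ there is a tautological section $e$ of the pulled-back extension bundle, mapping to $\lambda$ in $\hat A\times\C$. This means that the pullback of the extension class splits after scaling the quotient by $\lambda$. Transporting this splitting through the Atiyah sequence gives the desired map $\widetilde\nabla$. Locally, over an open $U\subset\hat A$ trivializing the torsor, we write $\widetilde\nabla=\lambda\nabla_U+\sum_j v_j\,dz_j$, where $\nabla_U$ is a local relative connection on $P|_{A\times U}$ and the $v_j$ are the fiber coordinates on $E(A)|_U$. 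The transition functions of $E(A)$ are precisely those correcting $\lambda(\nabla_U-\nabla_{U'})$, so the local formulas glue. The Leibniz rule with parameter $\lambda$ is then immediate from the local formula, since $\nabla_U$ satisfies the usual Leibniz rule and the $v_j\,dz_j$ term is $\mathscr O$-linear.

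\textbf{Step 3: checks and the main obstacle.} Restricted to $\lambda=1$, the construction must recover $\nabla^\natural$ on $P^\natural$. This holds because $A^\natural=\lambda^{-1}(1)$ with the same torsor description. At $\lambda=0$ the construction gives the Higgs field $\sum_j v_j\,dz_j$. We would also record integrability, which follows since $[z\partial_i,z\partial_j]=0$ and the local form is flat. The main obstacle is Step 2's identification of the extension class of $E(A)$ with the relative Atiyah class of $P$, including the correct sign and normalization. I would settle this by comparing with the known case $\lambda=1$, where \cite{Rothstein} and \cite{LaumonFouriertransform} identify $A^\natural$ with the torsor of relative connections on $P$. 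Canonicity then follows because the splitting is determined by the tautological section.
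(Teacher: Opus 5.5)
The paper does not prove this lemma itself; it quotes it from \cite{holonomicDmodules}. Your argument is the standard construction behind it and is correct: realize $E(A)$ as the space of relative $\lambda$-connections on $P$ and take the tautological one, locally $\widetilde\nabla=\lambda\nabla_U+\sum_j v_j\,dz_j$, which glues because the fibre coordinates change by $v'_j=v_j+\lambda g_j$, where $\nabla_U-\nabla_{U'}=\sum_j g_j\,dz_j$.

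The ``main obstacle'' you flag disappears if you trivialize $E(A)|_U$ using the section of $A^\natural=\lambda^{-1}(1)$ given by $\nabla_U$, instead of matching extension classes. This makes the transition rule and the agreement with $\nabla^\natural$ at $\lambda=1$ automatic. It also removes the ambiguity $\widetilde\nabla\mapsto\widetilde\nabla+\lambda\theta_0$, $\theta_0\in V$, that comes from automorphisms of the extension, which is what canonicity needs. Step 1 can be dropped: its obstruction condition $\lambda\cdot\operatorname{at}(\widetilde P)=0$ in $H^1(A\times E(A),\Omega^1_{A\times E(A)/E(A)})$ would only give existence, not a canonical choice.
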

The tensor product

$$
(\operatorname{id} \times \lambda)^* \widetilde{M} \otimes_{\mathscr{O}_{A \times E(A)}} \widetilde{P}
$$
naturally has the structure of $\mathscr{R}_{A \times E(A) / E(A)}$-module on $A \times E(A)$: concretely, the module structure is given by $\lambda(m \otimes s)=(\lambda m) \otimes s=m \otimes \lambda s$ and $\lambda \partial_i(m \otimes s)= \left(z \partial_i m\right) \otimes s+m \otimes \widetilde{\nabla}_{\partial_i}(s)$. We may therefore consider the relative de Rham complex
$$
\mathrm{DR}_{A \times E(A) / E(A)}\left((\operatorname{id} \times \lambda)^* \widetilde{M} \otimes_{\mathscr{O}_{A \times E(A)}}(\widetilde{P}, \widetilde{\nabla})\right),
$$
which is defined just as in the case of $\mathscr{D}$-modules.

\begin{definition}
    The Fourier-Mukai transform of an algebraic $\mathscr{R}_A$-module $M$ is
$$
\widetilde{\operatorname{FM}}_A(M)=\mathbf{R}\left(p_2\right)_* \operatorname{DR}_{A \times E(A) / E(A)}\left((\operatorname{id} \times \lambda)^* \widetilde{M} \otimes_{\mathscr{O}_{A \times E(A)}} \widetilde{P}\right) ;
$$
it is an object of $\mathrm{D}^b\left(\strucsheaf_{E(A)}\right)$, the bounded derived category of quasi-coherent algebraic sheaves on $E(A)$.
\end{definition}

We have the following compatibility result.

\begin{proposition}[Proposition 13.1 of \cite{holonomicDmodules}]
Let $\mathcal{M}$ be a coherent algebraic $\mathscr{D}_A$-module with a good
filtration $F_\bullet \mathcal{M}$. Then the extended Fourier--Mukai transform
\[
  \tfm_A(\M)
  \in \mathrm{D}^b_{\mathrm{coh}}\bigl(\mathcal{O}_{E(A)}\bigr)
\]
of the associated graded $\mathscr{R}_A$-module has the following properties:
\begin{enumerate}
  \item[(i)] It is equivariant for the natural $\mathbb{C}^*$-action on the
  vector bundle $E(A)$.

  \item[(ii)] Its restriction to
  $A^\natural = \lambda^{-1}(1)$ is canonically isomorphic to
  $\operatorname{FM}_A(\mathcal{M})$.

  \item[(iii)] Its restriction to
  $\widehat{A} \times H^0(A,\Omega_A^1) = \lambda^{-1}(0)$
  is canonically isomorphic to
  \[
    \fmd_A(\M) =\mathbf{R}(p_{23})_*
    \left(
      p_{12}^*P
      \otimes p_1^*\Omega_A^g
      \otimes p_{13}^*(\operatorname{id}\times\iota)^*
      \operatorname{gr}^F \mathcal{M}
    \right),
  \]
  where $\iota=-\operatorname{id}$ is the obvious involution of
  $H^0(A,\Omega_A^1)$.
\end{enumerate}
\end{proposition}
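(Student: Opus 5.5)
The plan is to work with the Rees module $R_F\M=\bigoplus_k F_k\M\, z^k$, regarded as an $\mathscr{R}_A$-module and hence as a $\C^*$-equivariant quasi-coherent sheaf $\widetilde{M}$ on $A\times\mathbb{A}^1_z$ that is flat over $\mathbb{A}^1_z$. The three properties then follow from three facts. First, the construction of $\tfm_A$ is $\C^*$-equivariant. Second, the kernel $\operatorname{DR}_{A\times E(A)/E(A)}\bigl((\id\times\lambda)^*\widetilde{M}\otimes\widetilde{P}\bigr)$ is a bounded complex of sheaves that are flat over $E(A)$, since each term is $p_1^*(\Omega^j_A\otimes R_F\M)$ twisted by a line bundle, pulled back along $\lambda$. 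Third, the derived restriction to a fiber of $\lambda$ therefore commutes with $\mathbf{R}(p_2)_*$ by base change. Finally, coherence holds because $R_F\M$ is finitely generated over $\mathscr{R}_A$ (the filtration is good) and $p_2$ is proper; this is the same argument as for $\fm_A$.

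For (i), $\C^*$ acts on $E(A)$ fiberwise by scaling. On the Rees side it acts by the grading, with $z$ of weight one. The generalized connection $\widetilde{\nabla}$ of Lemma 11.5 is homogeneous of weight one in the fiber coordinates, as is the action $\lambda\partial_i(m\otimes s)=z\partial_i m\otimes s+m\otimes\widetilde{\nabla}_{\partial_i}s$. Hence the relative de Rham differentials are equivariant, and pushforward preserves equivariance.

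For (ii), restricting to $\lambda=1$ sets $z=1$. This gives $R_F\M/(z-1)R_F\M\simeq\M$, and $(\widetilde{P},\widetilde{\nabla})|_{A\times A^\natural}=(P^\natural,\nabla^\natural)$ because $A^\natural=\lambda^{-1}(1)\subset E(A)$ is identified with the universal connection (this is the content of the construction in Lemma 11.5). Base change then identifies the restricted kernel with the kernel defining $\fm_A(\M)$.

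For (iii), restricting to $\lambda=0$ gives $R_F\M/zR_F\M=\gr^F_\bullet\M$, on which $z\partial_i$ acts as the symbol action of $\partial_i\in\operatorname{Sym}T_A$. The connection $\widetilde{\nabla}$ becomes the $0$-connection $s\mapsto\theta_A\otimes s$ on $p_{12}^*P$ over $A\times\hat{A}\times V$, where $\theta_A$ is the universal one-form. The restricted relative de Rham complex is therefore $p_{12}^*P$ tensored with exactly the complex of Lemma 13.3, including the sign conventions $(-1)^g$. By Lemma 13.3, this complex is quasi-isomorphic to $p_{12}^*P\otimes p_1^*\Omega_A^g\otimes p_{13}^*(\id\times\iota)^*\gr^F\M$. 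Applying $\mathbf{R}(p_{23})_*$ yields $\fmd_A(\M)$.

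The main obstacle is the bookkeeping in (iii): matching the restriction of $\widetilde{\nabla}$ to $\lambda=0$ with $\theta_A\wedge-$ with the correct sign, which is what produces the involution $\iota$. I would check this by writing $\widetilde{\nabla}$ locally as $\lambda\,d+\sum_j t_j\,dz_j$ in trivializing coordinates.
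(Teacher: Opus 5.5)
Your proposal is correct and is essentially the standard argument: the paper gives no proof of its own and simply quotes Proposition 13.1 of \cite{holonomicDmodules}, and the argument there is the one you outline. It combines $\C[z]$-flatness of the Rees module, base change along the fibers of $\lambda$, the identification $(\widetilde{P},\widetilde{\nabla})|_{A\times A^\natural}\simeq(P^\natural,\nabla^\natural)$ from Lemma 11.5 at $\lambda=1$, and the Koszul-type resolution of Lemma 13.3 at $\lambda=0$. The only point stated too quickly is coherence: the terms of the relative de Rham complex are merely quasi-coherent and its differentials are not $\mathcal{O}_{A\times E(A)}$-linear, so properness of $p_2$ alone does not suffice, and you need the argument used for $\fm_A$ (for instance, resolving $R_F\M$ by induced modules $\mathscr{R}_A\otimes_{\mathcal{O}_A}\mathcal{F}$, whose relative de Rham complexes are quasi-isomorphic to coherent sheaves).
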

The extended Fourier-Mukai transform should be thought as a family of Fourier-Mukai transforms over the twistor line $\mathbb{A}_\lambda^1$, interpolating the Fourier transforms on Dolbeault and de Rham moduli spaces. When $\M$ is a mixed Hodge module equipped with its Hodge filtration, it is expected that $\tfm_A(\M)$ behaves well over $\mathbb{A}_\lambda^1$.

\subsection{Functoriality of the extended Fourier-Mukai transform}
In this section, we prove the necessary functoriality of the extended Fourier-Mukai transform for mixed Hodge modules. They are only needed to prove flatness over the twistor line. 

Recall we have the map $\lambda:E(A)\rightarrow \mathbb{A}^1$. Given $L\in \operatorname{Pic}^0(A)$, it has a unique unitary connection $\nabla_L$. There exists a canonical section of $\lambda$ (it exists by extending the canonical action of $\C^*$ on $\mathbb{A}^1_\lambda$), $s_L:\mathbb{A}^1\rightarrow E(A)$ where $s_L(0)= (L,0)\in \hat{A}\times V\subset E(A)$, $s_L(1)= (L,\nabla_L)\in A^\natural\subset E(A)$. We define the following algebraic morphism:
$$t_L\defeq \id_{E(A)}+s_L\circ \lambda:E(A)\rightarrow E(A).$$
This is analogous to the translation morphism $t_{(L,0)}:\hat{A}\times V\rightarrow \hat{A}\times V$ on the Dolbeault moduli space.

We can pushforward, take smooth pullback of filtered $\D$-modules which underlie mixed Hodge modules (for example defined in \cite[chapter 8]{MHMproject}). Given a map $f:A\rightarrow B$ between abelian varieties, we still denote by $f^\natural:E(B)\rightarrow E(A)$ the natural map.

\begin{theorem}[Functoriality of the extended Fourier transforms] \label{thm:extendedfunctoriality}
    For mixed Hodge modules on abelian varieties, the extended Fourier-Mukai transform on $E(A)$ satisfies the following properties:
    \begin{itemize}
        \item[(a)] One has a canonical and functorial isomorphism
        $$ \tfm_A\left(-\otimes(L,\nabla_L ))\right)=\mathbf{L}(t_{L})^* \circ \tfm_A . $$

        \item[(b)] For a morphism $f: A \rightarrow B$ of abelian varieties, denote by $f^{\natural}: \hat{B}\times H^0(B,\Omega_B^1) \rightarrow \hat{A}\times H^0(A,\Omega_A^1)$ the induced morphism. Then one has canonical and functorial isomorphisms
        $$ \begin{aligned} \mathbf{L}\left(f^{\natural}\right)^* \circ \tfm_A & =\tfm_B \circ f_{+}, \\
        \mathbf{R} f_*^{\natural} \circ \tfm_B & =\tfm_A \circ f^{+}, \text{ when $f$ is smooth.}
        \end{aligned} $$

    \end{itemize}
\end{theorem}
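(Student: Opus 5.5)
The plan is to mimic the proof of \theoremref{thm:functoriality}, now working relatively over $E(A)$ and keeping track of the generalized connection $\widetilde{\nabla}$ on $\widetilde{P}$ from Lemma 11.5 of \cite{holonomicDmodules}. Everything should reduce to statements about the Poincaré bundle with its universal $\lambda$-connection, plus the analogue of Laumon's formula for the Rees module. Concretely, write $R_F\M=\bigoplus_k F_k\M\, z^k$, so that $\widetilde{M}$ is the Rees module on $A\times\mathbb{A}^1$. The key compatibilities to establish are:
\begin{itemize}
\item[(i)] $(\id_A\times t_L)^*(\widetilde{P},\widetilde{\nabla})\simeq (\widetilde{P},\widetilde{\nabla})\otimes p_1^*(L,\lambda\nabla_L)$ as $\lambda$-connections;
\item[(ii)] $(f\times\id_{E(B)})^*(\widetilde{P}_B,\widetilde{\nabla}_B)\simeq(\id_A\times f^\natural)^*(\widetilde{P}_A,\widetilde{\nabla}_A)$ on $A\times E(B)$;
\item[(iii)] the Rees module of $f_+(\M,F)$ is computed by the relative $\mathscr{R}$-module direct image of $R_F\M$.
\end{itemize}
Item (iii) holds by strictness of direct images of mixed Hodge modules, which is the $\lambda$-family version of Laumon's formula.

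For part (a), I first check (i). The section $s_L$ is, by construction, the $\C^*$-equivariant family $\lambda\mapsto(L,\lambda\nabla_L)$. Adding it via the group structure of $E(A)$ over $\mathbb{A}^1$, which is fiberwise tensor product of $\lambda$-connections, therefore tensors the universal object by $(L,\lambda\nabla_L)$. The Rees module of $(\M\otimes L,F\otimes L)$ is $R_F\M\otimes L$ with the $\mathscr{R}$-structure twisted by $\lambda\nabla_L$; here one uses the remark on $\Q$-VHS summands to stay inside $\operatorname{D}^b_{\MHM}$. With this, the relative de Rham complex of $(\id\times\lambda)^*\widetilde{M}\otimes L\otimes\widetilde{P}$ is the pullback under $\id_A\times t_L$ of the one for $\M$. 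Flat base change along the isomorphism $t_L$ (its inverse is $t_{L^{-1}}$) then gives (a).

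For the first formula in part (b), I follow the chain of isomorphisms in the proof of part (c) of \theoremref{thm:functoriality}. Base change along $f^\natural$ turns $\mathbf{L}(f^\natural)^*\tfm_A(\M)$ into $\mathbf{R}p_{2*}$ of a relative de Rham complex on $A\times E(B)$. Using (ii), this is the de Rham complex of $p_1^*R_F\M\otimes(f\times\id)^*\widetilde{P}_B$. Pushing forward along $f\times\id_{E(B)}$ and applying the projection formula for relative $\mathscr{R}$-modules gives
\[
\mathbf{R}(f\times\id)_*\operatorname{DR}_{A\times E(B)/E(B)}\bigl(p_1^*R_F\M\otimes(f\times\id)^*\widetilde{P}_B\bigr)\simeq \operatorname{DR}_{B\times E(B)/E(B)}\bigl(p_1^*(f_+R_F\M)\otimes\widetilde{P}_B\bigr).
\]
Then (iii) identifies $f_+R_F\M$ with the Rees module of $f_+\M$. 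For the second formula, when $f$ is smooth, $f^+$ is $f^*$ with the pulled-back filtration up to shift, so $R_F f^+\M=f^*R_F\M[\dim A-\dim B]$. The statement then follows from the first formula by adjunction, or directly from base change together with the projection formula along $f^\natural$.

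The main obstacle is the projection formula identifying the relative de Rham pushforward along $f$ with the $\mathscr{R}$-module direct image, uniformly in $\lambda$. At $\lambda\neq0$ this is the classical $\D$-module statement and at $\lambda=0$ it is Laumon's formula. Over all of $\mathbb{A}^1$ I would prove it by factoring $f$ as a closed embedding followed by a smooth projection and computing each piece with the relative Spencer complex. Strictness is what ensures that no $\lambda$-torsion appears, so the result matches the Rees module of $f_+\M$ rather than merely being quasi-isomorphic after inverting $\lambda$.
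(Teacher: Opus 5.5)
Your proposal follows the paper's proof essentially step for step. Part (a) goes via base change along the isomorphism $t_L$ and $(\id_A\times t_L)^*(\widetilde{P},\widetilde{\nabla})\simeq(\widetilde{P},\widetilde{\nabla})\otimes(\widetilde{L},\widetilde{\nabla}_L)$, and the first formula of (b) via base change along $f^\natural$, the identity $(\id_A\times f^\natural)^*\widetilde{P}_A\simeq(f\times\id_{E(B)})^*\widetilde{P}_B$, and the projection formula plus strictness (which the paper simply asserts, while you sketch how to check it uniformly in $\lambda$). For the second formula the paper also argues by adjunction, but that step needs $\tfm_A$ and $\tfm_B$ to be equivalences of categories \cite[p.~17]{holonomicDmodules} to identify the Hom-sets and conclude by Yoneda, so state this explicitly; your alternative ``direct'' route would additionally require computing the relative de Rham cohomology of the universal $\lambda$-connection along the fibres of $f$, not merely base change and the projection formula.
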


\begin{proof}
    We write derived functors using the usual notations for the corresponding sheaf-theoretic operations.
    
    Part $(a)$: denote by $\widetilde{L}$ the associated quasi-coherent sheaf on $A\times \mathbb{A}^1$, viewed as a module over $\mathscr{R}_{A\times \mathbb{A}^1/\mathbb{A}^1}$. Then $\widetilde{L}$ on $A\times \mathbb{A}^1$ also has a structure of relative $\lambda$-connection, where by abuse of notations, $\lambda:A\times \mathbb{A}^1\rightarrow \mathbb{A}^1$ is the second projection. Consider $\id_A\times \lambda:A\times E(A)\rightarrow A\times \mathbb{A}^1$, then $(\id_A\times \lambda)^* \widetilde{L}$ also has a relative $\lambda$-connection on $A\times E(A)$, i.e.,
$$ \widetilde{\nabla}_L: \widetilde{L} \rightarrow \Omega_{A \times E(A) / E(A)}^1 \otimes \widetilde{L} $$
that satisfies the Leibniz rule $\widetilde{\nabla}_L(f \cdot s)=f \cdot \widetilde{\nabla}_L s+d_{A \times E(A) / E(A)} f \otimes \lambda s$.

\begin{align*}
t_L^* \circ \widetilde{\operatorname{FM}}_A(\M)
&=
t_L^*
p_{2*}
\operatorname{DR}_{A\times E(A)/E(A)}
\left(
  (\operatorname{id}_A\times\lambda)^*\widetilde{\mathcal{M}}
  \otimes \widetilde{P}_A
\right)
\\
&\simeq
p_{2*}
(\operatorname{id}_A\times t_L)^*
\operatorname{DR}_{A\times E(A)/E(A)}
\left(
  (\operatorname{id}_A\times\lambda)^*\widetilde{\mathcal{M}}
  \otimes \widetilde{P}_A
\right)
\\
&\simeq
p_{2*}
\operatorname{DR}_{A\times E(A)/E(A)}
\left(
  (\operatorname{id}_A\times\lambda)^*\widetilde{\mathcal{M}}
  \otimes
  (\operatorname{id}_A\times t_L)^*\widetilde{P}_A
\right)
\\
&\simeq
p_{2*}
\operatorname{DR}_{A\times E(A)/E(A)}
\left(
  (\operatorname{id}_A\times\lambda)^*\widetilde{\mathcal{M}}
  \otimes
  \widetilde{P}_A
  \otimes
  \widetilde{(L,\nabla_L)}
\right)
\\
&\simeq
p_{2*}
\operatorname{DR}_{A\times E(A)/E(A)}
\left(
  (\operatorname{id}_A\times\lambda)^*
  \widetilde{(\mathcal{M}\otimes (L,\nabla_L))}
  \otimes
  \widetilde{P}_A
\right)
\\
&=
\widetilde{\operatorname{FM}}_A
\left(
  \mathcal{M}\otimes (L,\nabla_L)
\right).
\end{align*}
The first isomorphism is because of base change; the third isomorphism is because $(\id_A\times t_L)^* (\widetilde{P}_A,\widetilde{\nabla}) \simeq (\widetilde{P}_A,\widetilde{\nabla}) \otimes (\widetilde{L},\widetilde{\nabla}_L)$; the last isomorphism is because $(\id_A\times \lambda)^*\widetilde{\M}\otimes \widetilde{(L,\nabla_L)} \simeq (\id_A\times \lambda)^*\widetilde{(\M\otimes (L,\nabla_L))}$, viewed as modules with $\lambda$-connections on $A\times E(A)$.

Part $(b)$: we have the following diagram:
    \begin{equation}\label{diagram4}
    \begin{tikzcd}
	& {E(B)} & {E(A)} \\
	{B\times E(B)} & {A\times E(B)} & {A\times E(A)} \\
	{B\times\mathbb{A}^1} && {A\times\mathbb{A}^1}
	\arrow["{f^\natural}", from=1-2, to=1-3]
	\arrow["{\id_B\times \lambda}", from=2-1, to=3-1]
	\arrow[from=2-2, to=1-2]
	\arrow["{f\times \id_{E(B)}}"', from=2-2, to=2-1]
	\arrow["{\id_{A}\times f^{\natural}}", from=2-2, to=2-3]
	\arrow[from=2-3, to=1-3]
	\arrow["{\id_A \times \lambda}", from=2-3, to=3-3]
	\arrow[from=3-3, to=3-1]
\end{tikzcd}
    \end{equation}

Then it follows that:    
    \begin{align*}
(\mathbf{L}f^\natural)^* \circ \widetilde{\operatorname{FM}}_A(\mathcal{M})
& = (f^\natural)^* p_{2*}\,
\operatorname{DR}_{A\times E(A)/E(A)}
\left(
  (\operatorname{id}_A\times\lambda)^*\widetilde{\mathcal{M}}
  \otimes \widetilde{P}_A
\right)
\\
&\simeq
p_{2*}\,
(\id_A\times f^\natural)^*
\operatorname{DR}_{A\times E(A)/E(A)}
\left(
  (\operatorname{id}_A\times\lambda)^*\widetilde{\mathcal{M}}
  \otimes \widetilde{P}_A
\right)
\\
&\simeq
p_{2*}\,
\operatorname{DR}_{A\times E(B)/E(B)}
\left(
  (\operatorname{id}_A\times\lambda)^*\widetilde{\mathcal{M}}
  \otimes
  (\operatorname{id}_A\times f^\natural)^*\widetilde{P}_A
\right)
\\
&\simeq
p_{2*}\,
\operatorname{DR}_{A\times E(B)/E(B)}
\left(
  (\operatorname{id}_A\times\lambda)^*\widetilde{\mathcal{M}}
  \otimes
  (f\times \operatorname{id}_{E(B)})^*\widetilde{P}_B
\right)
\\
&\simeq
p_{2*}\,
(f\times \operatorname{id}_{E(B)})_*
\operatorname{DR}_{A\times E(B)/E(B)}
\left(
  (\operatorname{id}_A\times\lambda)^*\widetilde{\mathcal{M}}
  \otimes
  (f\times \operatorname{id}_{E(B)})^*\widetilde{P}_B
\right)
\\
&\simeq
p_{2*}\,
\operatorname{DR}_{B\times E(B)/E(B)}
\left(
  (\operatorname{id}_B\times\lambda)^*\widetilde{f_+\mathcal{M}}
  \otimes \widetilde{P}_B
\right)
\\
&=
\tfm_B\circ f_{+}(\M),
\end{align*}
all the $p_2$ here denote the obvious second projection to $E(A)$ or $E(B)$; the first isomorphism is by applying base change to the upper right corner of diagram \ref{diagram4}; the third isomorphism is because of $(\id_A\times f^\natural)^* \widetilde{P}_A \simeq (f\times \id_{E(B)})^* \widetilde{P}_B$; the second last isomorphism is by diagram chasing; the last isomorphism is because of the projection formula, the fact that taking de Rham complex commutes with pushforward, and the fact that pushforward of a complex of mixed Hodge modules is strict with respect to the filtration.

The second statement for part $(b)$ follows from the first one by adjunction, and the fact that $\tfm_A: \operatorname{D}^b(\mathscr{R}_A)\rightarrow \operatorname{D}^b(\strucsheaf_{E(A)})$ is an equivalence of categories \cite[p.17]{holonomicDmodules}:
\begin{align*}
\operatorname{Hom}\bigl(\tfm_A(\M), \mathbf{R}f^\natural_* \tfm_B(\mathcal{N}) \bigr)
&\simeq
\operatorname{Hom}\bigl(\mathbf{L}(f^\natural)^* \tfm_A(\M), \tfm_B(\mathcal{N}) \bigr)\\
&\simeq
\operatorname{Hom}\bigl(\tfm_B(f_+ \M),\tfm_B(\mathcal{N})  \bigr)\\
&\simeq
\operatorname{Hom}\bigl(f_+ \M,\mathcal{N}\bigr)\\
&\simeq
\operatorname{Hom}\bigl(\M,f^{+}\mathcal{N}\bigr)\\
&\simeq
\operatorname{Hom}\bigl(\tfm_A(\M),
\tfm_A(f^{+}\mathcal{N})\bigr).
\end{align*}
Note that here we require $f:A\rightarrow B$ to be proper and smooth. The first isomorphism is by adjunction, since $f^\natural:E(B)\rightarrow E(A)$ is a closed embedding in this case. The fourth isomorphism is because of adjunction and all the fact that the functors are strictly compatible with the Hodge filtration \cite{SaitoMHM}. By Yoneda lemma, we have $\mathbf{R} (f^{\natural})_* \circ \tfm_B  =\tfm_A \circ f^{+}$.

\end{proof}

\section{Flatness of the total Fourier transform}
In this section, we use the results in previous sections to prove the extended Fourier transform behaves nicely over the twistor line $\mathbb{A}^1_\lambda$ for $\M^\bullet \in \operatorname{D}^b_{\MHM}(A)$. 

The main result of this section is, the cohomology sheaves of the total Fourier transform $\widetilde{\fm}(\M^\bullet)$ of $\M^\bullet \in \operatorname{D}^b_{\MHM}$ are flat over the twistor line $\mathbb{A}_{\lambda}^1$. The cohomology sheaves are already flat over $\lambda\neq 0$ by the $\C^*$-action on $\mathbb{A}^1_\lambda$, therefore proving flatness amounts to proving the cohomology sheaves have no $\lambda$-torsion. We use an ad hoc method based on \cite[Proof of Theorem 20.1]{holonomicDmodules}, which says for $\operatorname{D}^b_{\MHM}$, the truncation of the Fourier transform is still a Fourier transform of another object in $\operatorname{D}^b_{\MHM}$. Using the functoriality of $\widetilde{\fm}$ and $\fmd$, we can truncate the Fourier transform simultaneously on $E(A)$ and $\hat{A}\times V$. If the cohomology sheaves are not flat, then $\fmd$ will have extra cohomology sheaves, we can always truncate simultaneously so that for $\lambda\neq 0$ we get zero complex, but the extra cohomology sheaves on $\lambda=0$ will give a contradiction to our main result \theoremref{thm: main theorem}. Unfortunately we couldn't find a more natural proof instead of this ad hoc method.

\begin{proposition}\label{prop:lambdatorsion}
    For a single mixed Hodge module $\M$ on $A$, $\mathcal{H}^0(\widetilde{\fm}(\M))$ does not have $\lambda$-torsion.
\end{proposition}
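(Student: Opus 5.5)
We argue by contradiction, working with the complex $K \defeq \tfm_A(\M)\in \mathrm{D}^b_{\operatorname{coh}}(\strucsheaf_{E(A)})$ and its derived restriction to the Dolbeault fibre. Since $\lambda$ is a nonzerodivisor on $\strucsheaf_{E(A)}$, there is an exact triangle $K\xrightarrow{\lambda}K\to K|^{\mathbf{L}}_{\lambda=0}\to$. By the compatibility proposition (Proposition 13.1 of \cite{holonomicDmodules}), the restriction $K|^{\mathbf{L}}_{\lambda=0}$ is $\fmd_A(\M)$. Taking cohomology gives, for every $k$, short exact sequences
$$0\to \mathcal{H}^{k}(K)/\lambda\mathcal{H}^{k}(K)\to \mathcal{H}^k(\fmd_A(\M))\to \mathcal{H}^{k+1}(K)[\lambda]\to 0,$$
where $[\lambda]$ denotes the $\lambda$-torsion subsheaf. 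So nonzero $\lambda$-torsion $T\subset\mathcal{H}^0(K)$ forces an extra contribution to $\mathcal{H}^{-1}(\fmd_A(\M))$. This contribution does not come from any cohomology sheaf of $K$ itself. The aim is to show that such an extra contribution contradicts \theoremref{thm: main theorem}.

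\textbf{Locating the torsion.} By the $\C^*$-equivariance in part (i) of the compatibility proposition, $K$ restricted to $\lambda\neq 0$ is pulled back from $A^\natural=\lambda^{-1}(1)$. Hence $T$ is set-theoretically supported on $\lambda^{-1}(0)=\hat A\times V$, and its support $Z$ is $\C^*$-stable. As in \propositionref{prop:C-star invariance}(a), every component of $Z$ therefore meets $\hat A\times\{0\}$ at some point $(L,0)$. Using \theoremref{thm:extendedfunctoriality}(a), I would replace $\M$ by the unitary translate $\M\otimes(L,\nabla_L)$ (via \theoremref{thm:rational translates} and a direct summand of a unitary $\Q$-VHS, as in the remark of Section 5). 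After this, $t_L^*$ moves the point to the origin, and we may assume $T\neq0$ near $0\in\hat A\times V$.

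\textbf{Comparing fibre dimensions.} Consider the section $s_{\strucsheaf}:\mathbb{A}^1\to E(A)$ through the origin and the trivial connection. At $\lambda\neq0$ the derived fibres of $K$ along this line are $H^\bullet(A,\operatorname{DR}(\M))$; at $\lambda=0$ the fibre is the cohomology of $\operatorname{Dol}(\M,\strucsheaf,0)$. By $E_1$-degeneration these have equal dimensions in each degree. More generally, at any point $x\in\hat A\times V$ the main theorem (together with the corollary on equality of dimensions) gives
$$\dim H^k(K\otimes^{\mathbf{L}}k(x))=\dim H^k(K\otimes^{\mathbf{L}}k(y)),\qquad y=\operatorname{NAH}^{-1}(x),$$
and by $\C^*$-equivariance the same holds at every point of the orbit of $y$ in $\lambda\neq0$. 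Thus the jump loci $\{x:\dim H^k(K\otimes^{\mathbf L} k(x))\ge m\}$ in the fibre $\lambda=0$ are exactly the images of those in $\lambda=1$.

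\textbf{Deriving the contradiction.} Following the ad hoc strategy modelled on \cite[Proof of Theorem 20.1]{holonomicDmodules}, I would truncate simultaneously: realize $\tau_{\le -1}K$ (and its $\lambda=0$ counterpart) as transforms of another object of $\operatorname{D}^b_{\MHM}(A)$. This object should be built from the pieces of $\M$ whose support loci are the proper linear subvarieties $S^{k}(\M)$, $k<0$, using \theoremref{thm:functoriality}(c) and \theoremref{thm:extendedfunctoriality}(b) together with induction on $\dim A$ via \lemmaref{lem:induction lemma}. On $\lambda\neq0$ this truncation accounts for all of $\mathcal{H}^{-1}$. On $\lambda=0$, however, the term $T[\lambda]$ would survive in $\mathcal{H}^{-1}(\fmd)$ at points of $Z$ not lying in $\operatorname{NAH}(S^{-1}(\M))$, or it would raise the dimension there beyond the de Rham value. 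Either way this violates the equality $S^{-1}_m(\gr^F\M)=\operatorname{NAH}(S^{-1}_m(\M))$ of \theoremref{thm: main theorem}.

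The main obstacle is this last step: converting the pointwise equality of derived-fibre dimensions into a statement about torsion in the sheaf $\mathcal{H}^0(K)$. Non-flat base change mixes contributions from neighbouring degrees, so the torsion could a priori be absorbed by a drop in $\mathcal{H}^{-1}(K)/\lambda$. I would first check whether working at the generic point of $Z$ suffices, by localizing at that point and using the structure of $\C^*$-graded modules over $\C[\lambda]$. Only if that fails would I fall back on the simultaneous truncation described above.
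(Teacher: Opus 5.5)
Your opening paragraph already contains the whole proof, but you missed the one fact that closes it: $\mathcal{H}^{-1}(\fmd_A(\M))=0$. By definition, $\fmd_A(\M)=\mathbf{R}(p_{23})_*\bigl(p_{12}^*P\otimes p_1^*\omega_A\otimes p_{13}^*(\id_A\times\iota_V)^*\gr^F\M\bigr)$ is the derived direct image of a single coherent sheaf, so it has no cohomology sheaves in negative degrees. Your exact sequence for $k=-1$ then exhibits $\ker\bigl(\lambda\colon\mathcal{H}^0(K)\to\mathcal{H}^0(K)\bigr)$ as a quotient of $0$. That kernel is nonzero exactly when $\mathcal{H}^0(K)$ has $\lambda$-torsion, so you are done. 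This is the paper's proof, phrased there via the two-row Tor spectral sequence coming from the resolution $[\strucsheaf_{E(A)}\xrightarrow{\lambda}\strucsheaf_{E(A)}]$. For a single mixed Hodge module nothing else is needed: no main theorem, no unitary translates, no truncations.

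As written, however, your proposal does not prove the statement. The decisive step is explicitly left open, and the route you sketch aims at the wrong target.

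\textbf{Support loci versus cohomology sheaves.} \theoremref{thm: main theorem} controls pointwise derived fibres $H^{k}(\fmd_A(\M)\otimes^{\mathbf L}k(x))$, not cohomology sheaves. The locus $S^{-1}(\gr^F\M)$ can be nonempty even though the sheaf $\mathcal{H}^{-1}(\fmd_A(\M))$ is identically zero, because negative-degree fibre cohomology comes from $\mathcal{T}or$ terms of $\mathcal{H}^{\geq 0}$. So torsion in $\mathcal{H}^0(K)$ need not appear as a violation of the equality of support loci. This is precisely the mixing of degrees you worry about.

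\textbf{The role of NAH.} $\operatorname{NAH}$ is only a real-analytic identification between the fibres $\lambda=1$ and $\lambda=0$. Equality of fibre dimensions at $x$ and at $\operatorname{NAH}^{-1}(x)$ says nothing about how $K$ specializes to $x$ as $\lambda\to 0$, which is what $\lambda$-torsion measures. Adding $\C^*$-equivariance does not help, since the orbits specialize only to points of $\hat A\times\{0\}$.

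\textbf{The truncation step.} Your appeal to \theoremref{thm:truncation} is heavy machinery, and you do not specify the object you would truncate to. The paper uses simultaneous truncation for complexes, in the proof of \theoremref{thm: flatness}, because there $\fmd_A(\M^\bullet)$ can have a nonzero $\mathcal{H}^0$ and the degree bound is unavailable. For a single module the degree bound makes it unnecessary.
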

\begin{proof}
    
    By the Tor-spectral sequence for restricting to $\lambda=0$, we have
    $$E_{2}^{p,q} = \mathcal{H}^p(\mathcal{H}^q\widetilde{\fm}(\M)\otimes^L \strucsheaf_{\hat{A}\times V}) \Rightarrow \mathcal{H}^{p+q}(\widetilde{\fm}(\M) \otimes^L \strucsheaf_{\hat{A}\times V}) = \mathcal{H}^{p+q}(\fmd(\M)).$$
    Because $\strucsheaf_{\hat{A}\times V}$ has a resolution $[\strucsheaf_{E(A)}\xrightarrow{\lambda} \strucsheaf_{E(A)}]$, this spectral sequence only has nonzero terms in $E_2^{0,q}$ and $E_{2}^{-1,q+1}$, and degenerates at $E_2$. We have
    $$\mathcal{H}^{-1}(\fmd(\M)) \simeq \mathcal{H}^{-1}(\mathcal{H}^0\tfm(\M)\otimes^L \strucsheaf_{\hat{A}\times V}).$$
    Since $\mathcal{H}^{-1}(\mathcal{H}^0\tfm(\M)\otimes^L \strucsheaf_{\hat{A}\times V}) \simeq \ker(\cdot \lambda:\mathcal{H}^0\tfm(\M)\rightarrow \mathcal{H}^0\tfm(\M))$, it suffices to show for a single mixed Hodge module, $\mathcal{H}^{-1}(\fmd(\M))=0$. But by the definition of $\fmd$, it is the Fourier-Mukai transform of a single sheaf $\gr^F\M$ on $A\times V$, hence cannot have cohomology sheaves in negative degrees. This implies $\mathcal{H}^0\tfm(\M)$ has no $\lambda$-torsion.

    

\end{proof}

We need the following \theoremref{thm:truncation} in proving flatness.  It says any truncation of the total Fourier transform of a complex of mixed Hodge modules is the transform of another complex of mixed Hodge modules. It originates from Theorem 20.1 of \cite{holonomicDmodules}, whose proof carries over to $\tfm$ of complexes of mixed Hodge modules, using the functoriality for transforms of complexes of mixed Hodge modules which is proved in \theoremref{thm:extendedfunctoriality}. Technically we need to include a proof of \theoremref{thm:truncation} here, but since our proof of \theoremref{thm: flatness} is based on the idea of taking truncations and reduce multiplicity of total Fourier transform, the proof of \theoremref{thm: flatness} and \theoremref{thm:truncation} will be very similar, and it's a kind of repeation if we write it down in details. Therefore we just include here a paragraph to illustrate what has to be changed in adapting Schnell's proof in our setting. 

One thing to note that in adapting the proof of Theorem 20.1 of \cite{holonomicDmodules}, at various points we will need to translate the complex of mixed Hodge modules by unitary points, by tensoring with $V_\chi$ to stay within the derived category of $\Q$-mixed Hodge modules. On the Fourier transform, tensoring with $V_\chi$ just adds direct sums of rank one unitary translates of $\tfm_A(\M^\bullet)$. One has to replace the natural map $\M\rightarrow f^+(\tau_{\geq k} f_+ \M)$ in \cite[Proof of Theorem 20.1]{holonomicDmodules} by the natural map 
$$\M\rightarrow f^+(\tau_{\geq k} (\M\otimes V_{\chi^{-1}})) \otimes V_{\chi}$$
where $\chi$ is given by \theoremref{thm:rational translates}, and then complete to a triangle in $\operatorname{D}^b_{\MHM}(A)$. The reason for doing this is that in the holonomic setting \cite{holonomicDmodules}, one can take arbitrary translate, but in our setting of $\Q$-mixed Hodge modules, the argument of Schnell by reducing multiplicity each component at a time cannot work. The argument in \cite{holonomicDmodules} in fact simutanesously reduces the multiplicity of Fourier transform for all of these copies of unitary translates.  Because by the Galois invariance of support loci for $\Q$-mixed Hodge modules, if we reduce multiplicity along a single component of cohomology support loci, then multiplicities along all Galois conjugates of a component of cohomology support loci are simutaneously reduced. This can be easily seen from \cite[Theorem 15.2]{holonomicDmodules} and \cite[Proof of Proposition 1.13]{torsionpoints}. For a constructible sheaf, one can define a similar Fourier transform (sometimes called the Mellin transform, e.g. \cite{mellinpropagation}) for its cohomology support loci in $\operatorname{Char}(A)$, and this Fourier transform is defined over $\Q$ if the constructible complex is defined over $\Q$ \cite[Proposition 14.8]{holonomicDmodules}. \cite[Theorem 15.2]{holonomicDmodules} says Riemann-Hilbert correspondence identifies these different transforms on $A^\natural$ and $\operatorname{Char}(A)$ analytically. So if we reduce the generic multiplicity of Fourier transform along a single component of its support, then we automatically reduce mutiplicity along all Galois conjugates of this component. Therefore in our setting, introducing twists by $V_{\chi}$ and $V_{\chi^{-1}}$ is necessary, and does not cause a problem in adapting Schnell's proof. Same consideration also happens for the proof of \theoremref{thm: flatness}.

Another interesting part of \theoremref{thm:truncation} is that, it says for $\Q$-mixed Hodge modules, the essential image for their Fourier transforms are closed under taking cohomology sheaves. 

\begin{theorem} \label{thm:truncation}
    Let $F=\tfm_A(\mathcal{M})$ for some $\mathcal{M} \in \mathrm{D}_{\MHM}^b\left(A\right)$. Then for every $n \in \mathbb{Z}$, the truncations $\tau_{\leq n} F$ and $\tau_{\geq n} F$ are again the total Fourier transforms of complexes of mixed Hodge modules.
\end{theorem}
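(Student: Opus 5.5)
We adapt the proof of \cite[Theorem 20.1]{holonomicDmodules}. Set $F=\tfm_A(\M)$. By the triangle $\tau_{\leq n-1}F\to F\to\tau_{\geq n}F\to$ and the fact that $\tfm_A$ is an equivalence $\operatorname{D}^b(\mathscr{R}_A)\to\operatorname{D}^b(\strucsheaf_{E(A)})$, it suffices to treat one truncation. If $\tau_{\geq n}F=\tfm_A(\mathcal{N})$ for some $\mathcal{N}\in\operatorname{D}^b_{\MHM}(A)$, we want the map $F\to\tau_{\geq n}F$ to be the transform of a morphism $\M\to\mathcal{N}$ in $\operatorname{D}^b_{\MHM}(A)$. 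Then the cone of that morphism, shifted by $[-1]$, is a complex of mixed Hodge modules whose transform is $\tau_{\leq n-1}F$. So we will build $\tau_{\geq n}F$ directly, by descending induction on $n$ and ascending induction on $\dim A$. The case $\dim A=0$ is trivial. For $n$ large, $\tau_{\geq n}F=0$. At each step it is enough to show that $\mathcal{H}^{n}F[-n]$ is the transform of a complex of mixed Hodge modules, or more precisely to strip off the top cohomology sheaf.

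The basic move is the following. Take a component $Z$ of $\operatorname{Supp}\mathcal{H}^nF$ of maximal dimension. By \theoremref{thm:unitarytranslates}, $Z$ is a unitary translate of $\operatorname{Im}(f^\natural)$ for some surjection $f\colon A\to B$ with connected fibres. By \theoremref{thm:rational translates}, we may choose the translating point $\chi$ so that $V_\chi$ is a unitary $\Q$-VHS. Using \theoremref{thm:extendedfunctoriality}(a) and (b), the object
$$\M\longrightarrow f^+\bigl(\tau_{\geq k}f_+(\M\otimes V_{\chi^{-1}})\bigr)\otimes V_\chi,$$
with $k$ chosen in terms of $n$ and the relative dimension, is a map in $\operatorname{D}^b_{\MHM}(A)$. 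Its transform is $\mathbf{R}f^\natural_*\,\tau_{\geq\ast}\,\mathbf{L}(f^\natural)^*F$, summed over the Galois translates. Here $\tau_{\geq k}f_+$ is the truncation in $\operatorname{D}^b_{\MHM}(B)$, and by the induction hypothesis on $B$ it corresponds to truncating $\tfm_B$. Complete this map to a triangle $\M'\to\M\to\cdots$. The transform of $\M'$ has the same cohomology as $F$ in degrees $<n$, and a strictly smaller generic multiplicity of $\mathcal{H}^n$ along $Z$ and along all of its Galois conjugates. The conjugates are handled simultaneously because the support loci of a $\Q$-structure are Galois stable, via \cite[Theorem 15.2]{holonomicDmodules} and \cite[Proposition 1.13]{torsionpoints}. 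We iterate, first on multiplicity and then on the dimension of the support. This terminates, since $\mathcal{H}^nF$ is coherent with finitely many components. At the end, the part of $F$ in degree $n$ has been absorbed into the cones of maps between complexes of mixed Hodge modules. Assembling these cones gives an $\mathcal{N}$ with $\tfm_A(\mathcal{N})\simeq\tau_{\geq n}F$. The case $\operatorname{Supp}=E(A)$ (where $B=A$) needs separate handling: there one uses that $\mathcal{H}^n$ with full support only occurs for $n=0$ for a single module, by \propositionref{prop:C-star invariance}(c), and reduces using the weight and cohomological filtrations as in the proof of the main theorem.

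The main obstacle is checking that the multiplicity genuinely drops on the whole of $E(A)$, and not only on $\lambda=1$. This needs the restriction of $\mathbf{L}(f^\natural)^*$ to interact well with truncation over the twistor line. It is also exactly where the Galois sums $V_\chi$ must be controlled, so that no new components of the support are created. To handle it, I would first argue on the fibre $\lambda=1$, where it is Schnell's argument verbatim. I would then use $\C^*$-equivariance (\cite[Proposition 13.1]{holonomicDmodules}(i)) to propagate the conclusion over $\lambda\neq0$. Finally, I would use the main theorem \theoremref{thm: main theorem} to see that no extra support appears at $\lambda=0$.
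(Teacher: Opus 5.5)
Your strategy is the paper's: run the proof of \cite[Theorem 20.1]{holonomicDmodules} with the twisted map $\M\to f^+\bigl(\tau_{\geq k}f_+(\M\otimes V_{\chi^{-1}})\bigr)\otimes V_\chi$, complete it to a triangle in $\operatorname{D}^b_{\MHM}(A)$, and lower the multiplicity along $Z$ and its Galois conjugates together. As written, however, the central step conflates two different truncations, and the induction built on it does not close.

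You take $\mathcal{N}=\tau_{\geq k}f_+(\M\otimes V_{\chi^{-1}})$ in $\operatorname{D}^b_{\MHM}(B)$, but then assert that its transform is $\tau_{\geq k}\mathbf{L}(f^\natural)^*F$. This is false, because $\tfm_B$ of a single mixed Hodge module has cohomology in several degrees. What actually holds, and is all Schnell uses (compare Step~2 of the flatness proof), is a generic statement:
\begin{itemize}
\item $\tfm_B(\mathcal{N})\in\operatorname{D}^{\geq k}$;
\item its $\mathcal{H}^k$ agrees with $i^*\mathcal{H}^kF$ at the generic point of $Z$;
\item its higher cohomology sheaves are torsion.
\end{itemize}
With only this, the cone creates cohomology above the degree you are stripping. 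If $F\in\operatorname{D}^{\leq k}$ and $F'\to F\to G\to$ is the transformed triangle, then $\mathcal{H}^{k+1}F'=\operatorname{coker}(\mathcal{H}^kF\to\mathcal{H}^kG)$ and $\mathcal{H}^{k+2}F'\simeq\mathcal{H}^{k+1}G$, which need not vanish. So your descending induction on the top degree is not well founded.

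The repair is Schnell's bookkeeping. First replace $\M$ by $\tau_{\leq n-1}\M$. This leaves $\tau_{\leq n-1}F$ unchanged and makes $\tau_{\geq n}F$ torsion; it is also what disposes of the full-support case, and the weight filtration plays no role. Then induct on $\operatorname{Supp}\tau_{\geq n}F$, using in order:
\begin{itemize}
\item its dimension;
\item the number of components of that dimension;
\item for such a component $Z$, the largest $k\geq n$ with $Z\subseteq\operatorname{Supp}\mathcal{H}^kF$, together with the multiplicity there.
\end{itemize}
This invariant drops because all newly created sheaves live on proper closed subsets of $Z$. The output is a morphism $\M''\to\M$ whose transform is $\tau_{\leq n-1}F\to F$, and $\tau_{\geq n}F$ is the transform of its cone. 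So you should build $\tau_{\leq n-1}F$ first, not $\tau_{\geq n}F$.

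Alternatively, your top-degree induction can be salvaged by genuinely truncating on the Fourier side over $B$, using induction on $\dim A$. The inductive statement must then be strengthened to supply a morphism $X\to Y$ in $\operatorname{D}^b_{\MHM}(B)$ whose transform is $\tfm_B(X)\to\tau_{\geq k}\tfm_B(X)$. The theorem as stated gives only an object $Y$, and the functor to $\operatorname{D}^b(\mathscr{R}_B)$ is not full; this is exactly the point you raised at the start.

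Either way, the Galois issue you flag is real, and your $\C^*$/main-theorem plan does not touch it, because it already occurs at $\lambda=1$. After complexification, $f^+\mathcal{N}\otimes V_\chi$ contains the summands $f^+\tau_{\geq k}f_+(\M\otimes L_{\chi^{-\sigma}})\otimes L_{\chi^{\tau}}$ with $\sigma\neq\tau$. The map from $\M$ has zero component into these, so they reappear in $\mathcal{H}^{k+1}F'$ with full support on the conjugates of $Z$. The same thing happens if two conjugates of $\chi$ carry $\IM f^\natural$ onto the same component. You must keep only the diagonal summand, for instance via the idempotent of $K\otimes_{\Q}K$ with $K$ the field of definition of $\chi$. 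You must also choose $\chi$ so that distinct conjugates give distinct components.

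Your use of the main theorem to rule out a component $Z\subseteq\lambda^{-1}(0)$ is sound: near the generic point of $Z$ the relevant degree is the top one, so base change applies.
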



Now we prove~\theoremref{thm: flatness}.

\begin{proof}[Proof of \theoremref{thm: flatness}]
    By \theoremref{thm:truncation}, we assume $\tfm(\M^\bullet) = \mathcal{H}^1(\tfm(\M^\bullet))[-1]$ only has cohomology sheaf at degree $1$, and the goal is to show it has no $\lambda$-torsion.  By the Tor-spectral sequence for restricting to $\lambda=0$, $\mathcal{H}^1(\fmd(\M^\bullet)) \simeq \mathcal{H}^1(\tfm(\M))\otimes \strucsheaf_{\hat{A}\times V}$, $\mathcal{H}^0(\fmd(\M^\bullet))\simeq \ker(\cdot\lambda:\mathcal{H}^1(\tfm(\M^\bullet))\rightarrow \mathcal{H}^1(\tfm(\M^\bullet)))$, and $\fmd(\M^\bullet)$ has no other cohomology sheaves. It remains to show that $\mathcal{H}^0(\fmd(\M^\bullet))$ must be zero. 

    The idea of the proof is to reduce multiplicity of $\tfm(\M^\bullet)$ along each component of its support, and show that $\lambda$-torsion (if there is any) does not change in this process. This is a contradiction because in the end we get into the situation that $\tfm(\M^\bullet)$ is equal to zero but there is non-trivial $\lambda$-torsion, which is absurd.

    \quad
    
    \textit{Step 1:} We first show that we can reduce to the case where $\tfm(\M^\bullet)$ does not have full support. Consider the distinguished triangle $\tau_{\leq 0}\M^\bullet\rightarrow \M^\bullet \rightarrow \tau_{\geq 1} \M^\bullet\rightarrow$ in $\operatorname{D}^b_{\MHM}(A)$ and apply functors $\fmd$ and $\tfm$ to it, we have $\mathcal{H}^0(\fmd(\tau_{\leq 0}\M^\bullet))\simeq \mathcal{H}^0(\fmd(\M^\bullet)) $, and
    $$0\rightarrow \mathcal{H}^1(\tfm(\tau_{\leq 0}\M^\bullet))\rightarrow  \mathcal{H}^1(\tfm(\M^\bullet)) \rightarrow \mathcal{H}^1(\tfm(\tau_{\geq 1}\M^\bullet))\rightarrow .$$
    By the spectral sequence $E_2^{p,q}=\mathcal{H}^p(\tfm(\mathcal{H}^q\M^\bullet))\Rightarrow \mathcal{H}^{p+q}(\tfm(\M^\bullet))$, we know $$\mathcal{H}^1(\tfm(\tau_{\geq 1}\M^\bullet)) \simeq \mathcal{H}^0(\tfm(\mathcal{H}^1\M^\bullet)).$$
    Hence the $\lambda$-torsion of $\mathcal{H}^1(\tfm(\M^\bullet))$, if exists, must come from $\mathcal{H}^1(\tfm(\tau_{\leq 0}\M^\bullet))$. $\mathcal{H}^1(\tfm(\tau_{\leq 0}\M^\bullet))$ does not have full support, because its restriction to $\lambda=1$ is the $\mathcal{H}^1$ of the de Rham transform $\fm (\tau_{\leq 0}\M^\bullet)$, and it's known in \cite[Proposition 19.2]{holonomicDmodules} that $\mathcal{H}^1(\fm (\tau_{\leq 0}\M^\bullet))$ is not of full support. 

    \quad
    
    \textit{Step 2:} We prove there is a way to modify $\M^\bullet$ to get another complex of mixed Hodge modules with zero $\tfm$, but with nonzero $\mathcal{H}^0\fmd$. The key point of this step is that, the sheaf $\mathcal{H}^0(\fmd)$ is not changed throughout the process, even if we get some different complexes of mixed Hodge modules and apply the functor $\fmd$, and it's always equal to the $\lambda$-torsion of $\mathcal{H}^1(\tfm)$ applied to these different complexes. The idea of modifying the complex comes from \cite[proof of Theorem 20.1]{holonomicDmodules}. 
    
    Now we assume $F\defeq \tfm(\M^\bullet) = \mathcal{H}^1(\tfm(\M^\bullet))[-1]$ does not have full support. Then $F$ is supported in unions of ``unitary" translates of subvarieties of the form $t_L^*(f^\natural (E(B)))$ where $f:A\rightarrow B$ is a smooth morphism of abelian varieties, $\dim B<\dim A$, and $L\in \hat{A}$ by \theoremref{thm: main theorem}. By \theoremref{thm:rational translates} and part $(a)$ of \theoremref{thm:extendedfunctoriality} we can assume one irreducible component of $\tfm(\M^\bullet)$ with largest dimension is equal to $Z=t_{L_{\chi^{-1}}}^* \IM(f^\natural:E(B)\rightarrow E(A))$. Assume $\codim Z=2r\geq 2$.  

    Let $i:Z\rightarrow E(A)$. Now consider the pullback $\mathbf{L} i^* F$ to the subvariety $Z$. By construction, the $1$-st cohomology sheaf of $\mathbf{L} i^* F$ is supported on all of $Z$, while all cohomology sheaves in degree bigger than one are zero. By functoriality of the total Fourier transform for pushforward \theoremref{thm:extendedfunctoriality} $(b)$, we have $\mathbf{L} i^* F \simeq \tfm_B\left(f_{+} (\mathcal{M}^\bullet \otimes V_{\chi^{-1}} ) \right)$, and therefore $\mathcal{H}^1 \tfm_B\left(f_{+} (\mathcal{M}^\bullet \otimes V_{\chi^{-1}} )\right) \neq 0$. If we now define
$$
\mathcal{N}=\tau_{\geq 1} f_{+} (\mathcal{M}^\bullet \otimes V_{\chi^{-1}} ) \in \mathrm{D}_{\MHM}^{[1, r]}\left(B\right),
$$
we obtain a distinguished triangle
$$
\tfm_B\left(\tau_{\leq 0} f_{+} (\mathcal{M}^\bullet \otimes V_{\chi^{-1}} )\right) \rightarrow \tfm_B\left(f_{+} (\mathcal{M}^\bullet \otimes V_{\chi^{-1}} )\right) \rightarrow \tfm_B(\mathcal{N}) \rightarrow $$
we conclude that $\mathcal{H}^1 \tfm_B(\mathcal{N})$ and $i^* \mathcal{H}^1 F$ are isomorphic at the generic point of $Z$, and that $\mathcal{H}^i \mathrm{FM}_B(\mathcal{N})$ are torsion sheaves for $i>1$.

The adjunction morphism $\mathcal{M}^\bullet \rightarrow f^{+} (f_{+} (\mathcal{M}^\bullet \otimes V_{\chi^{-1}} ))\otimes V_{\chi}$ induces a morphism $\mathcal{M}^\bullet \rightarrow (f^{+} \mathcal{N})\otimes V_{\chi}$. We choose $\mathcal{M}^{\prime} \in \mathrm{D}_{\MHM}^b\left(A\right)$ so that there is a distinguished triangle in $\operatorname{D}^b_{\MHM}(A)$:
$$
\mathcal{M}^{\prime} \rightarrow \mathcal{M} \rightarrow f^{+} \mathcal{N}\otimes V_{\chi} \rightarrow \mathcal{M}^{\prime}[1].$$
Since $f$ is smooth of relative dimension $r$, we have $f^{+} \mathcal{N} \otimes V_{\chi}\in \mathrm{D}_{\MHM}^{[1-r, 0]}\left(A\right)$ (see \cite[p.33]{HTTDmodules} for the shift in degrees); consequently, $\mathcal{M}^{\prime} \in \mathrm{D}_{\MHM}^{\leq 1}\left(A\right)$ (see also \cite[p.33, proof of Theorem 20.1]{holonomicDmodules}). Let $F^{\prime}\defeq \tfm_A\left(\mathcal{M}^{\prime}\right)$; we claim the following.


\begin{claim}
    \begin{itemize}
        \item[(1)]  $\tau_{\leq 0} F^{\prime} = \tau_{\leq 0} F=0$, and $\supp (\tau_{\leq 1}F') \subseteq \supp(F)$;

        \item[(2)] the multiplicity of $\mathcal{H}^1 F^{\prime}$ along $Z$ is strictly smaller than that of $\mathcal{H}^1 F$.
    \end{itemize}
\end{claim}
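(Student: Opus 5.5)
Apply $\tfm_A$ to the defining triangle $\mathcal{M}'\to\mathcal{M}^\bullet\to f^+\mathcal{N}\otimes V_\chi\to$. This gives a distinguished triangle
\[
F'\to F\to G\to F'[1],\qquad G\defeq \tfm_A(f^+\mathcal{N}\otimes V_\chi).
\]
By \theoremref{thm:extendedfunctoriality} (b), $\tfm_A(f^+\mathcal{N})\simeq \mathbf{R}f^\natural_*\tfm_B(\mathcal{N})$. By part (a), tensoring with $V_\chi$ replaces this by a direct sum of pullbacks under $t_{L}$, one for each Galois conjugate $L$ of $L_\chi$. So $G$ is a finite direct sum of translated pushforwards of $\tfm_B(\mathcal{N})$ along closed embeddings. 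One summand is supported on $Z$ and is the pushforward of $\tfm_B(\mathcal{N})$; the others are supported on the Galois conjugates of $Z$.

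Since $\mathcal{N}\in \mathrm{D}^{[1,r]}_{\MHM}(B)$, the spectral sequence $\mathcal{H}^p\tfm_B(\mathcal{H}^q\mathcal{N})\Rightarrow \mathcal{H}^{p+q}\tfm_B(\mathcal{N})$ applies. For a single mixed Hodge module, $\tfm$ has cohomology only in degrees $\ge 0$; this holds on $\lambda=1$ by \cite{holonomicDmodules}, and over all of $E(A)$ by the same base-change argument as in \propositionref{prop:lambdatorsion}. Hence $\tfm_B(\mathcal{N})$ lives in degrees $\geq 1$, so $\tau_{\le 0}G=0$. Since $F=\mathcal{H}^1F[-1]$, the long exact sequence gives $\mathcal{H}^iF'=0$ for $i\le 0$ and
\[
0\to \mathcal{H}^1F'\to \mathcal{H}^1F\to \mathcal{H}^1G\to \mathcal{H}^2F'\to 0 .
\]
Thus $\tau_{\le 0}F'=0=\tau_{\le 0}F$, and $\mathcal{H}^1F'\subseteq \mathcal{H}^1F$, so $\supp(\tau_{\le1}F')\subseteq\supp F$. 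This proves (1). The only delicate input is the degree bound for $\tfm_B$ of a single mixed Hodge module, which I would derive from the definition, the Tor spectral sequence and $\C^*$-equivariance.

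For (2), I localize at the generic point $\eta$ of $Z$. The Galois-conjugate summands of $G$ are supported on the other components, which do not contain $\eta$; the case where a conjugate coincides with $Z$ is treated as in the discussion before \theoremref{thm:truncation}. So near $\eta$ the map $\mathcal{H}^1F\to\mathcal{H}^1G$ is the adjunction map $\mathcal{H}^1F\to i_*\mathcal{H}^1\tfm_B(\mathcal{N})$. Because $\tau_{\le 0}\tfm_B(\tau_{\le0}f_+(\cdots))$ sits in the triangle before the claim, this map factors as
\[
\mathcal{H}^1F\to i_*i^*\mathcal{H}^1F=i_*\mathcal{H}^1(\mathbf{L}i^*F)\to i_*\mathcal{H}^1\tfm_B(\mathcal{N}),
\]
and the second arrow is an isomorphism at $\eta$, as already noted in the text. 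The first arrow is surjective. Therefore $\mathcal{H}^1F\to\mathcal{H}^1G$ is surjective at $\eta$, and $i^*\mathcal{H}^1F\neq 0$ there because $Z$ is a component of the support. The kernel $\mathcal{H}^1F'$ then has length at $\eta$ equal to $\mathrm{length}_\eta\mathcal{H}^1F-\mathrm{length}_\eta(i_*i^*\mathcal{H}^1F)$, which is strictly smaller.

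The main obstacle is making sure the extra Galois summands of $V_\chi$ do not meet $\eta$ and do not spoil the identification of the map with the restriction map. I would handle this by Galois invariance of the loci: the conjugate components are distinct, of the same dimension, and have their own multiplicities reduced in parallel.
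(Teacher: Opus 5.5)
Your proof is correct and follows the paper's argument. You apply $\tfm_A$ to the triangle, use $\tfm_B(\mathcal{N})\in\mathrm{D}^{\geq 1}$ to get $\tau_{\leq 0}F'=0$ and $\mathcal{H}^1F'\hookrightarrow\mathcal{H}^1F$, and then use that $\mathcal{H}^1F\to\mathcal{H}^1G$ is generically the restriction to $Z$; the only difference is that you prove the multiplicity drop directly by a length count at the generic point $\eta$ of $Z$, where the paper cites \cite[Lemma 20.2]{holonomicDmodules}. Your worry about Galois-conjugate summands passing through $\eta$ is unnecessary: $\ker(\mathcal{H}^1F\to\mathcal{H}^1G)$ lies inside the kernel of its component into the summand $i_*\mathcal{H}^1\tfm_B(\mathcal{N})$, and length at $\eta$ can only decrease when passing to a subsheaf.
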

\begin{proof}
    The first part is obvious: by functoriality of $\widetilde{\fm}$ transform for smooth pullbacks \theoremref{thm:extendedfunctoriality}, we have $\tfm_A\left(f^{+} \mathcal{N}\right) \simeq \mathbf{R} i_* \tfm_B(\mathcal{N})$, and so the support of the total Fourier transform of $f^{+} \mathcal{N}\otimes V_{\chi}$ is entirely contained in translates of $Z$ by direct summands in $V_\chi$. Moreover, $\tfm_A\left(f^{+} \mathcal{N}\right)$ belongs to $\mathrm{D}_{\text{coh}}^{\geq 1}(\mathcal{O}_{E(A)})$ since $\mathcal{N}=\tau_{\geq 1}f_{+} (\mathcal{M}^\bullet \otimes V_{\chi^{-1}} )\in \operatorname{D}_{\MHM}^{[1,r]}(B)$ and $\tfm_A\left(f^{+} \mathcal{N}\right) \simeq \mathbf{R} i_* \tfm_B(\mathcal{N})$; apply the functor $\tfm$ to the triangle above, we have $\tau_{\leq 0} F^{\prime} \simeq \tau_{\leq 0} F$, they are equal to zero since $F$ by assumption is a single sheaf placed at degree one.

For the statement about multiplicity, we have an exact sequence by applying $\tfm$ to the triangle $
\mathcal{M}^{\prime} \rightarrow \mathcal{M} \rightarrow f^{+} \mathcal{N} \otimes V_{\chi}\rightarrow \mathcal{M}^{\prime}[1]:$
$$
0 \rightarrow \mathcal{H}^1 F^{\prime} \rightarrow \mathcal{H}^1 F \rightarrow \mathcal{H}^1 \tfm_A\left(f^{+} \mathcal{N} \otimes V_\chi\right). $$
The left hand side is zero since $\tfm_A\left(f^{+} \mathcal{N}\right)$ belongs to $\mathrm{D}_{\text{coh}}^{\geq 1}(\mathcal{O}_{E(A)})$. Because $\mathcal{H}^1 \tfm_B(\mathcal{N})$ and $i^* \mathcal{H}^1 F$ are isomorphic at the generic point of $Z$, by functoriality \theoremref{thm:extendedfunctoriality} part $(a)$, we have  
$\mathcal{H}^1 \tfm_A\left(f^{+} \mathcal{N} \otimes V_\chi\right) $ restricted to each Galois conjugate of $Z$ are isomorphic to $\mathcal{H}^1 F$ restricted to that component.
By \cite[Lemma 20.2]{holonomicDmodules}, we know the multiplicity of $\mathcal{H}^1F'$ is less that that of $\mathcal{H}^1F$.
\end{proof}
In the proof, in fact we simutaneously reduced the multiplicity of $F$ along all Galois conjugates of $Z$.

\textit{Step 3: $\lambda$-torsion is not changed.} Now we look at the $\lambda$-torsion part: because $\mathcal{N}$ is a complex of mixed Hodge modules truncated at degree $1$, $\mathcal{H}^1(\tfm_B(\mathcal{N})) = \mathcal{H}^0(\tfm(\mathcal{H}^1\mathcal{N}))$ by a spectral sequence argument, and it has no $\lambda$-torsion by \propositionref{prop:lambdatorsion}. By functoriality of $\tfm$ for smooth pullbacks \theoremref{thm:extendedfunctoriality}, together with induction on dimensions of abelian varieties, $\mathcal{H}^1 \tfm_A\left(f^{+} \mathcal{N}\right) \simeq i_* \mathcal{H}^1(\tfm_B(\mathcal{N}))$ has no $\lambda$-torsion as well. By the exact sequence above, the $\lambda$-torsion of $\mathcal{H}^1F$ comes from $\mathcal{H}^1F'$, and $ \mathcal{H}^0(\fmd(\M^\bullet)) \simeq \mathcal{H}^0(\fmd(\M')) \simeq \ker(\cdot\lambda:\mathcal{H}^1F'\rightarrow \mathcal{H}^1F')$. The $\lambda$-torsion in $\mathcal{H}^1$ is not changed, even if we replace $F$ by $F'$.

Consider $\tau_{\leq 1} F'$, then it's the total Fourier transform of another complex of mixed Hodge modules $\M''\in \operatorname{D}^b_{\MHM}(A)$ by \theoremref{thm:truncation}, with support contained in that of $F$. Moreover by the construction above,
$$\mathcal{H}^0(\fmd(\M'')) \simeq \mathcal{H}^0(\fmd(\M^\bullet)) = \ker(\cdot\lambda:\mathcal{H}^1F'\rightarrow \mathcal{H}^1F') = \ker(\cdot\lambda:\mathcal{H}^1F\rightarrow \mathcal{H}^1F),$$
but multiplicity of $\mathcal{H}^1(\tfm(\M'')) = \mathcal{H}^1 F'$ along $Z$ is less. 


\quad

\textit{Step 4:} By possibly changing $\M^\bullet\in \operatorname{D}^b_{\MHM}$, we have reduced the multiplicities of $\mathcal{H}^1(\tfm(\M^\bullet))$ and $\mathcal{H}^1(\fmd(\M^\bullet))$ along the components of their supports. However, in this process the sheaf $\mathcal{H}^0(\fmd\M^\bullet)$ is not changed, and it is always equal to the $\lambda$-torsion of $\mathcal{H}^1(\tfm(\M^\bullet))$. By keep doing this procedure for each irreducible component of the support of $\mathcal{H}^1(\tfm(\M^\bullet))$, eventually we get a complex of mixed Hodge modules $\M^\bullet$, with zero total Fourier transform (hence zero de Rham Fourier transform), but nonzero Dolbeault Fourier transform if $$\mathcal{H}^0(\fmd(\M^\bullet))\simeq \ker(\cdot\lambda:\mathcal{H}^1(\tfm(\M^\bullet))\rightarrow \mathcal{H}^1(\tfm(\M^\bullet)))$$ is nonempty. This yields a contradiction to the fact that objects in $\operatorname{D}^b_{\MHM}(A)$ have the same cohomology support loci on de Rham and Dolbeault moduli spaces under the identification by non-abelian Hodge correspondence.

We have proved that the cohomology sheaves of the total Fourier transform of a complex of mixed Hodge modules $\M^\bullet\in \operatorname{D}^b_{\MHM}(A)$ are flat over $\mathbb{A}^1_\lambda$.
    
\end{proof}

\begin{remark}
    Our main result \theoremref{thm: main theorem} is crucial for the above proof of flatness, since we need to ensure the supports of $\tfm$ are linear in $E(A)$, and then we can apply the functorial operations. However, in the last step, it's not necessary to invoke our main theorem, in fact $E_1$-degeneration for mixed Hodge modules is enough. Since the total transform is zero, we know the de Rham transform is also zero, hence there are no cohomology support loci on $A^\natural$. But since Dolbeault transform is nonzero, there has to be a support loci whose intersection with $\hat{A}\times \{0\}\subset \hat{A}\times V$ is nonempty by $\C^*$-invariance of $\fmd(\M)$. Then by $E_1$-degeneration, the de Rham cohomology support loci should be nonempty, which is a contradiction.
\end{remark}

\section{Other realizations for Tannaka groups of Hodge modules}

Recently there are various applications of generic vanishing to geometry of subvarieties of abelian varieties via representation theory \cite{TannakagroupgenericPPAV}, \cite{microlocalI}, \cite{microlocalII}. The connection to representation theory is because of the following. Given a Hodge module $\M$, there's a tensor category $\langle \M\rangle$ generated by $\M$, where the tensor product is given by the convolution product $\M_1 *\M_2 \defeq a_+(\M_1\boxtimes \M_2)$ ($a:A\times A\rightarrow A$ is the addition map, and one has to pass to a suitable quotient category to get a tensor category, see \cite[section 1.b]{microlocalI}). After appropriate reductions, one gets a neutral Tannakian category, with a fiber functor which is given by a general fiber of the Fourier transform of $\M$ on $A^\natural$. Associated to a Tannakian category with a fiber functor, there's an affine algebraic group whose category of representations is equivalent to the Tannakian category. In our setting, any fiber functor $\omega$ on $\langle \M\rangle$ induces an equivalence
$$\langle \M\rangle \xrightarrow{\sim} \operatorname{Rep}(G)$$
with the tensor category of finite dimensional algebraic representations of the affine algebraic group $G=\operatorname{Aut}^\otimes(\omega)$ of the tensor automorphisms of $\omega$.
By \cite{DeligneMilne}, on any Tannakian category over an algebraically closed field any two fibers functors are isomorphic, so the group $G=G_{\M}$ is uniquely determined up to isomorphism. Since $\M$ is semi-simple, $G$ is a reductive group \cite[Remark 5.4]{kramersemiabelian}. In \cite{microlocalI}, Krämer showed that the Tannaka group is also the minimal reductive structure group of the Fourier transform of $\M$ on the open set of $A^\natural$ where it's locally free.

For a principal $G$-bundle $\mathscr{G}$ on a smooth algebraic variety $X$, for any linear representation $G\rightarrow \operatorname{GL}(V)$, there's an associated vector bundle $\mathscr{G}\times^G V$ on $X$. This gives a $\C$-linear exact tensor functor 
$$\Phi_{\mathscr{G}}: \operatorname{Rep}(G)\rightarrow \operatorname{Coh}(X).$$
We say that an algebraic vector bundle over a smooth algebraic variety $X$ has a reduction to a principal $G$-bundle $\mathscr{G}$ if it is an associated vector bundle of $\mathscr{G}$ for some linear representation of $G$. Any vector bundle of rank $n$ has a reduction to a principal bundle with structure group $\mathrm{GL}_n(\mathbb{C})$.

\begin{definition}[Naturanl reduction for a vector bundle]
    A reduction of a given vector bundle to a principal bundle
$\mathscr{H}$ is called a \textit{natural reduction} if
\begin{itemize}
    \item[(1)]
    the structure group $H$ of the principal bundle $\mathscr{H}$
    is reductive, and

    \item[(2)]
    for any other reduction of the given vector bundle to a
    principal bundle $\mathscr{G}$ with a reductive structure
    group $G$ there exists an embedding
    $\iota : H \hookrightarrow G$ such that the following
    diagram commutes:
    $$
    \begin{tikzcd}[column sep=large, row sep=large]
    \operatorname{Rep}(G)
        \arrow[rr, "\Phi_{\mathscr{G}}"]
        \arrow[dr, dotted, "\iota^*"']
    && \operatorname{Coh}(\mathscr{O}_U) \\
    &
    \operatorname{Rep}(H)
        \arrow[ur, "\Phi_{\mathscr{H}}"']
    &
    \end{tikzcd} $$
\end{itemize}
\end{definition}

Bogomolov \cite{BogomolovStablebundle} showed that on a smooth variety $X$ with $H^0(X,\strucsheaf_X)=\C$, any algebraic vector bundle has a natural reduction. Krämer applied Bogomolov's result with the reconstruction property \cite[Corollary 22.3]{holonomicDmodules} of Fourier transform of holonomic $\D$-modules to show the following result.

\begin{theorem}[Theorem 3.2 of \cite{microlocalI}]
    For a semi-simple holonomic $\D$-module $\M$ on $A$, on an open subset $U$ of $A^\natural$, $\fm_A(\M)$ is induced by a principal $G$-bundle $\mathscr{G}$, where $G=G_\M$ is the Tannaka group of $\M$. Moreover $\mathscr{G}$ is a natural reduction of $\fm_A(\M)|_U$.
\end{theorem}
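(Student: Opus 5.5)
The argument follows Krämer's proof of \cite[Theorem 3.2]{microlocalI}, with Bogomolov's theorem and the reconstruction property as the two inputs. First I reduce to the case where $\M$ is semi-simple and every simple summand has nonzero Euler characteristic. The summands with $\chi=0$ are negligible in the quotient category defining $\langle \M\rangle$, and by \cite{holonomicDmodules} their transforms are supported on proper subvarieties of $A^\natural$. Removing them therefore does not change $\fm_A(\M)$ on a dense open set, and it does not change $G_\M$. After this reduction, $\mathcal{H}^0\fm_A(\M)$ has full support and all other cohomology sheaves are supported on proper linear subvarieties. So I can take an open set $U\subseteq A^\natural$ whose complement has codimension $\geq 2$ and on which $\fm_A(\M)|_U=\mathcal{H}^0\fm_A(\M)|_U$ is locally free.

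Next I build a fiber functor from restriction to $U$. For $\mathcal{N}\in\langle\M\rangle$, the same reasoning shows that, after discarding negligible objects, $\fm_A(\mathcal{N})|_U$ is a vector bundle. Convolution goes to tensor product of these bundles, because $\fm_A(a_+(\M_1\boxtimes\M_2))\simeq \fm_A(\M_1)\otimes^{\mathbf{L}}\fm_A(\M_2)$ by the Künneth formula and functoriality for $a_+$ (the de Rham analogue of \theoremref{thm:functoriality}(c)). This gives an exact $\C$-linear tensor functor $\Phi:\langle\M\rangle\simeq\operatorname{Rep}(G)\to\operatorname{Vect}(U)$. Fixing a point $u\in U$, restriction to $u$ composed with $\Phi$ is a fiber functor, so $\Phi$ is given by a $G$-torsor $\mathscr{G}$ over $U$; it is the functor of $\mathscr{G}$-valued points of the Isom-scheme between the fiber functor at $u$ and $\Phi$. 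This shows that $\fm_A(\M)|_U$ is induced by $\mathscr{G}$.

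Finally I prove that $\mathscr{G}$ is the natural reduction. By Bogomolov \cite{BogomolovStablebundle}, which applies because $A^\natural$ is an affine bundle over $\hat{A}$ with $H^0(A^\natural,\strucsheaf)=\C$ and the complement of $U$ has codimension $\geq2$, there is a natural reduction $\mathscr{H}$ with reductive group $H$. Minimality then gives an embedding $H\hookrightarrow G$ compatible with $\Phi$. For the reverse inclusion I use the reconstruction property \cite[Corollary 22.3]{holonomicDmodules}: every object of $\langle\M\rangle$ is recovered from the restriction of its transform to $U$. Any $\mathscr{H}$-invariant in a tensor construction $\fm_A(\M)^{\otimes a}\otimes(\fm_A(\M)^\vee)^{\otimes b}|_U$ is a global section of the associated bundle. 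By reconstruction, this section comes from a morphism from the unit object in $\langle\M\rangle$, so it is $G$-invariant. Since $G$ and $H$ are reductive and $H\subseteq G\subseteq \mathrm{GL}$, Chevalley's theorem says a reductive subgroup is determined by its tensor invariants, hence $H=G$.

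The main obstacle is this last step: showing that every horizontal section on $U$ comes from a morphism of $\mathcal{D}$-modules. This requires full faithfulness of restriction to $U$, which is where reconstruction, extension across codimension $\geq2$ via reflexivity, and the Laumon–Rothstein equivalence enter.
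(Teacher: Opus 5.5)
Your proposal is correct and has the same architecture as the argument the paper attributes to Krämer. Bogomolov's natural reduction gives $H\hookrightarrow G$, and reconstruction plus Laumon--Rothstein forces $H=G$. The difference is in how equality is detected.

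In the version the paper sketches in Section 8, one works with invariant subspaces. An $H$-stable subspace of a $G$-representation has an $H$-stable complement, hence gives a splitting of $\Phi(\mathcal N)|_U$. The functorial reconstruction formula lifts this to a splitting of the whole complex $\fm_A(\mathcal N)$, hence of $\mathcal N$, so the subspace is $G$-stable. Chevalley's line-stabilizer theorem then yields $G\subseteq H$.

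You work with invariant vectors instead, and this makes the geometric input lighter. Since $\fm_A(\delta_0)=\strucsheaf_{A^\natural}$ and $\fm_A(\mathcal N)\in\mathrm{D}^{\geq 0}$, Laumon--Rothstein gives $\operatorname{Hom}(\delta_0,\mathcal N)\simeq H^0\bigl(A^\natural,\mathcal H^0\fm_A(\mathcal N)\bigr)$. Reflexivity of $\mathcal H^0\fm_A(\mathcal N)$ together with $\codim(A^\natural\setminus U)\geq 2$ identifies this with $H^0(U,\Phi(\mathcal N))$. So you only need the reflexivity part of the reconstruction statement, not the iterated truncation--duality formula.

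The price is on the group-theoretic side. Chevalley's theorem only provides a stabilized line, on which $H$ may act by a nontrivial character, so invariant vectors do not see it. What you actually need is the reductive refinement: a reductive $H\subseteq\mathrm{GL}(V)$ is exactly the subgroup fixing all mixed tensors fixed by $H$ (Deligne, \emph{Hodge cycles on abelian varieties}, Prop.~3.1(c), which rests on Matsushima's theorem). It applies because Bogomolov's $H$ is reductive. So the attribution ``Chevalley's theorem'' in your last step should be replaced by this statement.

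One point to tighten concerns the choice of $U$. ``The same reasoning'' would only give an open set depending on $\mathcal N$, whereas you need a single $U$ for all of $\langle\M\rangle$. The correct justification is that every object is, modulo negligibles, a direct summand of a tensor construction in $\M$ and $(-1)^*\mathbf{D}_A\M$. Using both Künneth and the duality compatibility $\fm_A\bigl((-1)^*\mathbf{D}_A\M\bigr)\simeq\mathbf{R}\mathcal{H}om(\fm_A(\M),\strucsheaf_{A^\natural})$, the transform of such an object on $U$ is the corresponding tensor construction of $\fm_A(\M)|_U$. A direct summand of a vector bundle on the irreducible $U$ is locally free, so the negligible summands vanish on $U$ automatically.
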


Applying our results, we get different fiber functors on the Tannakian category $\langle \M\rangle$. We will also show that on $\proj(E(A))$, the descended total Fourier transform have the Tannaka group $G_\M$ as their structure groups, and on $\proj(E(A))$ it's the minimal structure group.

\begin{proposition}
The functor $\fmd_A: \operatorname{D}^b_{\MHM}(A) \longrightarrow \operatorname{D}^b_{\mathrm{coh}}(\strucsheaf_{\hat{A}\times V})$ underlies a tensor functor with respect to the tensor structures given by the convolution product on the source and the usual derived tensor product on the target. The same thing is true for 
$\tfm_A: \operatorname{D}^b_{\MHM}(A) \longrightarrow \operatorname{D}^b_{\mathrm{coh}}(\strucsheaf_{E(A)})$.
\end{proposition}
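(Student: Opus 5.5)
The plan is to deduce the tensor compatibility from the functoriality already proved, applied to the addition map and to the external product. For $\M_1,\M_2\in\operatorname{D}^b_{\MHM}(A)$ the convolution is $\M_1*\M_2=a_+(\M_1\boxtimes\M_2)$, with $a:A\times A\rightarrow A$ the addition. The induced map on Dolbeault moduli spaces is
$$a^\natural:\hat{A}\times V\rightarrow (\hat{A}\times V)\times(\hat{A}\times V),$$
which should be the diagonal $\Delta$, since $a^*L\simeq L\boxtimes L$ and $a^*\theta=p_1^*\theta+p_2^*\theta$. Granting this, \theoremref{thm:functoriality}(c) gives
$$\fmd_A(\M_1*\M_2)=\fmd_A(a_+(\M_1\boxtimes\M_2))\simeq \mathbf{L}\Delta^*\fmd_{A\times A}(\M_1\boxtimes\M_2).$$
The Künneth proposition of Section 3, stated there for $\M\boxtimes\cdots\boxtimes\M$ but proved by base change and functoriality of Poincaré bundles and equally valid for distinct factors, then identifies the right-hand side with
$$\mathbf{L}\Delta^*\bigl(\fmd_A(\M_1)\boxtimes\fmd_A(\M_2)\bigr)\simeq \fmd_A(\M_1)\otimes^{\mathbf{L}}\fmd_A(\M_2).$$
Two preliminary checks are needed. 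First, $\gr^F(\M_1\boxtimes\M_2)\simeq\gr^F\M_1\boxtimes\gr^F\M_2$ on $T^*(A\times A)$, which holds because external products of mixed Hodge modules are strict. Second, the involution $\iota_V$ and the factor $\omega_A$ in the definition of $\fmd$ must be compatible with $\boxtimes$, which is immediate since $\omega_{A\times A}=\omega_A\boxtimes\omega_A$. The unit is handled by the skyscraper Hodge module $\C_0$ at the origin: $\gr^F\C_0$ is supported on $\{0\}\times V$, and after the twist by $\omega_A$ and the degree shift its transform is $\strucsheaf_{\hat{A}\times V}$, since $P|_{\{0\}\times\hat A}$ is trivial.

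The main obstacle is not producing these isomorphisms but checking the coherence data of a tensor functor: compatibility with associativity, commutativity and unit constraints. The associativity constraint for convolution comes from $a\circ(a\times\id)=a\circ(\id\times a)$ together with the composition isomorphism $(fg)_+\simeq f_+g_+$. On the Fourier side it corresponds to the equality of the two composites of diagonals, $(\Delta\times\id)\circ\Delta=(\id\times\Delta)\circ\Delta$. The isomorphism of \theoremref{thm:functoriality}(c) is built from base change and projection formula isomorphisms, and these are themselves compatible with composition of morphisms, so the constraints should match; I would verify this by writing both composites through diagram \eqref{diagram1} for $A\times A\times A$. The commutativity constraint comes from the swap $\sigma$ on $A\times A$, using $a\circ\sigma=a$ and $\sigma^\natural\circ\Delta=\Delta$, together with the standard Koszul sign on $\boxtimes$ of complexes; I would check that the same sign appears on the coherent side. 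Since the statement only asks that $\fmd_A$ \emph{underlies} a tensor functor, these checks only need to be carried out up to canonical isomorphism.

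For $\tfm_A$ the argument is the same, using \theoremref{thm:extendedfunctoriality}(b) in place of \theoremref{thm:functoriality}(c). This requires a relative Künneth isomorphism $\tfm_{A\times A}(\M_1\boxtimes\M_2)\simeq\tfm_A(\M_1)\boxtimes\tfm_A(\M_2)$ restricted to the fibre product over $\mathbb{A}^1_\lambda$. The reason is that $E(A\times A)=E(A)\times_{\mathbb{A}^1}E(A)$, and $a^\natural$ is the diagonal of $E(A)$ into this fibre product, so the external product should be taken over $\mathbb{A}^1_\lambda$ rather than absolutely. I would prove this relative Künneth isomorphism by the same base-change argument, using that the relative de Rham complex of $\widetilde{P}_{A\times A}$ is the external tensor product over $\mathscr{O}_{\mathbb{A}^1}$ of the two factors. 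The remaining delicate point is flatness: to identify $\mathbf{L}\Delta^*$ of the relative external product with the derived tensor product on $E(A)$, one uses that $E(A)\rightarrow\mathbb{A}^1$ is smooth, so the relative product is computed by derived tensor product over $\mathbb{A}^1$.
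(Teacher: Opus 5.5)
Your proposal is correct and takes essentially the same approach as the paper. The paper's proof just cites Kr\"amer's Proposition~3.1 together with \theoremref{thm:functoriality} and \theoremref{thm:extendedfunctoriality}, which amounts to your argument: apply the pushforward compatibility to the addition map $a$, whose dual $a^\natural$ is the diagonal, and finish with the K\"unneth isomorphism. Your additional points (the K\"unneth formula for $\tfm_A$ taken relative to $\mathbb{A}^1_\lambda$ since $E(A\times A)=E(A)\times_{\mathbb{A}^1}E(A)$, the unit, and the coherence constraints) supply details the paper leaves implicit.
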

\begin{proof}
    The proof works the same as \cite[Proposition 3.1]{microlocalI}, using the functoriality results \theoremref{thm:functoriality} and \theoremref{thm:extendedfunctoriality} for Fourier transform of mixed Hodge modules.
\end{proof}

Now we focus on Fourier transforms for pure Hodge modules. For the total Fourier transform, the total space of $E(A)$ is a vector bundle over $\hat{A}$, so it has a natural $\C^*$-action, and $\tfm_A(\M)$ is equivariant with respect to the $\C^*$-action, so it descends to a complex of coherent sheaves on the projectivized bundle $\proj(E(A))$. The descended total Fourier transform is still denoted by $\tfm_A(\M)$. By \corollaryref{cor: generic vanishing for Dolbeault cohomology}, there exists an open set $U\subset\proj(E(A))$, such that
\begin{equation} \label{eq: total transform locally free}
\mathcal{H}^{i}\bigl(\tfm_A(\M)\bigr)\big|_{U}
\quad \text{is} \quad
\begin{cases}
    \text{locally free} & \text{for } i=0,\\
    \text{zero}        & \text{for } i\neq 0.
\end{cases}
\end{equation}
Furthermore, after removing finitely many translates of linear subvarieties, we may assume all subquotients of $\M$ also satisfies property~\eqref{eq: total transform locally free} on $U$. 


We define $\mathrm{M}(A)$ be the
quotient of the category of Hodge modules by the Serre
subcategory of all Hodge modules with Euler characteristic zero (they are called negligible modules in \cite[section 3]{microlocalI}). For an open subset $U\subset \hat{A}\times V$ or $U\subset \proj(E(A))$, where $U=p^{-1}(U')$ for some $U'\subset \hat{A}\times V$, denote by
$\mathrm{M}(A,U)$ the quotient of the full abelian subcategory of Hodge modules with locally free Fourier transform on $U$ by the Serre subcategory of all negligible modules contained in it. As in \cite[section 3]{microlocalI}, the total Fourier transform induces tensor functors
$$\widetilde{\Phi}: \mathrm{M}(A,U)\rightarrow \operatorname{Coh}(\strucsheaf_U),\quad \M\rightarrow \mathcal{H}^0(\tfm_A(\M))|_U, \quad U\subset \proj(E(A)).$$
These tensor functors are exact and takes values in locally free sheaves on $U$ by \corollaryref{cor: generic vanishing for Dolbeault cohomology}.


\begin{theorem}
    For any $\M$, the vector bundle $\widetilde{\Phi}(\M) \in \operatorname{Coh}(\strucsheaf_U)$ on $U\subset \proj(E(A))$
is induced by a principal bundle $\mathscr{G} \longrightarrow U$ whose structure group is isomorphic to the Tannaka group $G = G_\M$. $\mathscr{G}$ is a natural reduction for this vector bundle.
\end{theorem}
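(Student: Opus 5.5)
The plan is to follow Krämer's argument for \cite[Theorem 3.2]{microlocalI}, transplanted from $A^\natural$ to $U\subset\proj(E(A))$, using Bogomolov's theorem together with Tannakian reconstruction. I would proceed in three steps.

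\emph{Step 1: a principal $G_\M$-bundle exists.} The functor $\widetilde{\Phi}\colon \langle \M\rangle\to \operatorname{Coh}(\strucsheaf_U)$, restricted to the tensor category generated by $\M$ inside $\mathrm{M}(A,U)$, is an exact $\C$-linear tensor functor. This follows from the tensor-functor proposition above together with the generic vanishing property \eqref{eq: total transform locally free}, which we assumed for all subquotients of $\M$ on $U$. Its values are vector bundles. By the standard Tannakian dictionary (Nori, Deligne--Milne), an exact faithful tensor functor $\operatorname{Rep}(G)\to \operatorname{Vect}(U)$ is the same as a $G$-torsor $\mathscr{G}=\underline{\operatorname{Isom}}^\otimes(\omega_U,\widetilde{\Phi})$, where $\omega_U$ is the trivial fiber functor. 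Faithfulness holds because a nonzero non-negligible object has nonzero generic rank $\chi$. We also need fpqc-local existence of an isomorphism to the trivial functor. For this, note that for any point $x\in U$ the fiber $\widetilde{\Phi}(-)_x$ is a fiber functor on $\langle\M\rangle$, and on an algebraically closed field any two fiber functors are isomorphic \cite{DeligneMilne}. Hence $\mathscr{G}$ is a torsor and $\widetilde{\Phi}(\M)=\mathscr{G}\times^{G}\omega(\M)$.

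\emph{Step 2: naturality.} Let $\mathscr{H}$ be the natural reduction given by Bogomolov \cite{BogomolovStablebundle}. This requires $H^0(U,\strucsheaf_U)=\C$. I would check this using that $\proj(E(A))$ is projective over $\hat A$, and that $U$ is the complement of finitely many linear subvarieties, which have codimension $\ge 2$ by \corollaryref{cor: generic vanishing for Dolbeault cohomology} (or can be shrunk away from divisorial parts while keeping $U$ big enough). Bogomolov gives $H\hookrightarrow G_\M$ compatible with the reductions. We then must show $H=G_\M$. Following Krämer, every object $\mathcal{N}\in\langle\M\rangle$ yields $\widetilde{\Phi}(\mathcal{N})$, and $H$-invariants in tensor constructions of $\omega(\M)$ correspond to global sections of the associated bundles on $U$. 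So it suffices to show
\[
H^0\bigl(U,\widetilde{\Phi}(\mathcal{N})\bigr)\cong \operatorname{Hom}_{\langle\M\rangle}(\mathbf{1},\mathcal{N}),
\]
for all $\mathcal N$, i.e., that $\widetilde\Phi$ is fully faithful on invariants. Then $H$ and $G_\M$ have the same invariants in all representations, and since both are reductive, $H=G_\M$ by Chevalley's theorem.

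\emph{Step 3: the key point.} This is the computation of global sections, and I expect it to be the main obstacle. On $A^\natural$, Krämer uses the reconstruction property \cite[Corollary 22.3]{holonomicDmodules}. Here I would argue via the $\C^*$-quotient: sections over $U\subset\proj(E(A))$ are degree-$0$ $\C^*$-invariant sections over the cone $p^{-1}(U)\subset E(A)$. By the flatness of \theoremref{thm: flatness}, restriction to $\lambda=1$ is injective on these sections. Hence they embed into $\C^*$-equivariant sections of $\fm_A(\mathcal{N})$ over $U\cap A^\natural$ that extend over the boundary. The unit object has transform $\strucsheaf$ supported at the origin, and a $\C^*$-invariant section comes from degree $0$ of the filtration. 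So they reduce to $H^0$ of the reflexive sheaf $\mathcal{H}^0\fm_A(\mathcal N)$ on $A^\natural$ minus codimension $\ge2$, paired with the $\C^*$-weight. Then Krämer's identification, via reconstruction (\corollaryref{cor: reconstruction theorem} on the Dolbeault side, and \cite{holonomicDmodules} on the de Rham side), gives $\operatorname{Hom}(\mathbf 1,\mathcal N)$. Controlling the weights, so that only the unit summand contributes constants, is where the Hodge filtration enters, and it is the delicate part.
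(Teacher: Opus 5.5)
Your outline is the paper's. The paper simply runs Krämer's proof of \cite[Theorem 3.2]{microlocalI} on $U$, with two inputs. First, $H^0(U,\strucsheaf_U)=\C$ because the complement of $U$ in the projective variety $\proj(E(A))$ has codimension $\geq 2$, so Bogomolov applies. Second, a splitting of $\widetilde{\Phi}(\mathcal N)$ on $U$ restricts to a splitting on $U\cap A^\natural$, where Krämer's result is available. The gap is in your Step 3, which you leave open. You end it by calling the control of $\C^*$-weights through the Hodge filtration a delicate unresolved point. So the identity $H^0(U,\widetilde{\Phi}(\mathcal N))\cong\operatorname{Hom}(\mathbf 1,\mathcal N)$, on which your Step 2 rests, is never established. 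That obstacle is illusory: neither weights, nor flatness, nor the Dolbeault reconstruction result is needed.

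To close it, use density. $\widetilde{\Phi}(\mathcal N)$ is locally free on the irreducible $U$, and $U\cap A^\natural=U\cap\{\lambda\neq 0\}$ is dense in $U$. Hence restriction $H^0(U,\widetilde{\Phi}(\mathcal N))\to H^0(U\cap A^\natural,\Phi(\mathcal N))$ is injective. You only need an upper bound, because $W^{G_\M}\subseteq W^H\hookrightarrow H^0(U,\widetilde{\Phi}(\mathcal N))$ for $W=\omega(\mathcal N)$. Only this inclusion holds for a general natural reduction; sections of associated bundles can exceed invariants, so your phrase ``invariants correspond to global sections'' should be weakened to this.

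The de Rham bound $\dim H^0(U\cap A^\natural,\Phi(\mathcal N))\leq\dim\operatorname{Hom}(\mathbf 1,\mathcal N)$ is not literally Krämer's statement, since he works with splittings. It does follow in two steps. First, the non-negligible part of $\mathcal{H}^0\fm_A(\mathcal N)$ is reflexive, and the complement of $U\cap A^\natural$ in $A^\natural$ has codimension $\geq 2$, so sections extend to $A^\natural$. Second, $\fm_A(\mathcal N)\in\mathrm{D}^{\geq 0}$ and $\fm_A(\delta_0)=\strucsheaf_{A^\natural}$, so Laumon--Rothstein gives $H^0(A^\natural,\mathcal{H}^0\fm_A(\mathcal N))=\operatorname{Hom}(\delta_0,\mathcal N)$. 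Then $W^H=W^{G_\M}$ for all $W$, and your Chevalley argument finishes.

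Shorter still is the paper's route. Restrict Bogomolov's $\mathscr H$ to $U\cap A^\natural$, where it is a reductive reduction of $\Phi(\M)$. Krämer's naturality there gives a compatible embedding $G_\M\hookrightarrow H$, which together with $H\hookrightarrow G_\M$ forces $H=G_\M$, with no section computation at all.

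A smaller point in Step 1: pointwise isomorphism of fiber functors does not give local triviality. The torsor property should come from Deligne's theorem that $\underline{\operatorname{Isom}}^{\otimes}(\omega_U,\widetilde{\Phi})$ is an fpqc torsor.
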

\begin{proof}
    The same proof of \cite[Theorem 3.2]{microlocalI} works here in our setting. A splitting of $\widetilde{\phi}(\mathcal{N})$ for $\mathcal{N}\in \langle \M\rangle$ on $U\subset \proj(E(A))$ will induce a splitting on $U\cap A^\natural$. Because we can take $U$ to be complement of some translates of linear subvarieties, so codimension of complement of $U$ will be greater than or equal to two, hence So $H^0(U,\strucsheaf_U)=\C$. Bogomolov's theorem \cite[Theorem 2.1]{BogomolovStablebundle} applies, and we can proceed as in Krämer's proof.
\end{proof}




\section{A simple proof of the propagation property}
In this section, we give a simple proof of the following weaker version of the propagation property in \cite[Proposition 2.4]{perversesemiabelian}, but it suffices for our purpose. We also verify the propagation property for cohomology support loci of $\fmd_A(\M)$ for a mixed Hodge module $\M$.

Recall that the cohomology support loci $S^k(F^\bullet)$ of a bounded finitely generated complex of free $R$-modules to be the set of maximal ideals $m$ of $R$ such that $H^k(F^\bullet \otimes R/m)\neq 0$.

\begin{proposition}\label{prop:propagation}
    Let $F^\bullet$ be a bounded complex of finitely generated free $R$-modules, where $R$ is a regular noetherian ring. Suppose  $H^i(F^\bullet )$ and $H^i\left(\operatorname{Hom}_R\left(F^{\bullet}, R\right)\right)=0$, for any $i<0$.  Then the cohomology support loci of $F^\bullet$ satisfy the following propagation property:
    $$S^{-k}\left(F^{\bullet}\right) \subseteq \cdots \subseteq S^{-1}\left(F^{\bullet}\right) \subseteq S^0\left(F^{\bullet}\right) \supseteq S^1\left(F^{\bullet}\right) \supseteq \cdots \supseteq S^{\ell}\left(F^{\bullet}\right)$$
\end{proposition}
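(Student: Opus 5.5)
The plan is to reduce to a local statement at each maximal ideal, replace $F^\bullet$ there by a minimal free complex, and then use the two vanishing hypotheses to split off and kill whatever lies beyond a vanishing term. Fix a maximal ideal $\mathfrak m\subset R$ and set $k(\mathfrak m)=R/\mathfrak m$. All the data localize compatibly:
\begin{itemize}
\item $F^\bullet\otimes R/\mathfrak m\simeq F^\bullet_{\mathfrak m}\otimes_{R_{\mathfrak m}} k(\mathfrak m)$;
\item $\operatorname{Hom}_R(F^\bullet,R)_{\mathfrak m}\simeq \operatorname{Hom}_{R_{\mathfrak m}}(F^\bullet_{\mathfrak m},R_{\mathfrak m})$ because the $F^i$ are finitely generated free;
\item localization is exact, so $H^i(F^\bullet_{\mathfrak m})=0$ and $H^i(\operatorname{Hom}(F^\bullet_{\mathfrak m},R_{\mathfrak m}))=0$ for $i<0$.
\end{itemize}
Hence it suffices to treat a noetherian local ring $(R,\mathfrak m,k)$ and prove two implications: if $H^{j}(F^\bullet\otimes k)=0$ for some $j\ge 0$ then $H^{j+1}(F^\bullet\otimes k)=0$, and if $H^{-j}(F^\bullet\otimes k)=0$ for some $j\ge 0$ then $H^{-j-1}(F^\bullet\otimes k)=0$. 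Together these give the displayed chain of inclusions.

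The key step is to pass to a minimal complex. Over a local ring, a bounded complex of finitely generated free modules is homotopy equivalent to a direct sum $G^\bullet\oplus T^\bullet$. Here $G^\bullet$ is minimal, meaning $d(G^i)\subseteq \mathfrak m G^{i+1}$, and $T^\bullet$ is a split exact complex, a sum of shifted copies of $[R\xrightarrow{\id}R]$. We obtain this by repeatedly splitting off $[R\xrightarrow{\id}R]$ whenever some matrix entry of a differential is a unit. Homotopy equivalence is preserved by $-\otimes k$ and by $\operatorname{Hom}(-,R)$, so both hypotheses pass to $G^\bullet$. Since the differentials of $G^\bullet\otimes k$ vanish, we get $H^i(F^\bullet\otimes k)\simeq G^i\otimes k$. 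By Nakayama, $\mathfrak m\in S^i(F^\bullet)$ if and only if $G^i\neq 0$. Moreover, $\operatorname{Hom}_R(G^\bullet,R)$ is again minimal.

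Now suppose $G^j=0$ with $j\ge 0$. Then $G^\bullet$ splits as a direct sum of complexes $G^{<j}\oplus G^{>j}$. The summand $G^{>j}$ is concentrated in degrees $\ge j+1\ge 1$, so its dual is concentrated in degrees $\le -1$. Its dual is a direct summand of $\operatorname{Hom}(G^\bullet,R)$, whose cohomology vanishes in negative degrees, so this dual is exact. A bounded exact complex of finitely generated free modules is split exact, hence so is $G^{>j}$ (dualize back). A split exact complex with a nonzero term has a differential with a unit entry. Minimality forbids this, so $G^{>j}=0$, and in particular $G^{j+1}=0$.

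Symmetrically, suppose $G^{-j}=0$ with $j\ge 0$. Then $G^{<-j}$ is a direct summand living in degrees $\le -1$. It is exact by the hypothesis $H^i(F^\bullet)=0$ for $i<0$, hence split exact, hence zero by minimality, so $G^{-j-1}=0$.

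I expect the only real point of care to be the existence and properties of the minimal model. We need the splitting $F^\bullet\simeq G^\bullet\oplus T^\bullet$ to hold as complexes, and the facts that it is compatible with dualization and that minimal split exact bounded complexes vanish. These are standard over a noetherian local ring, and regularity of $R$ is not needed beyond the hypotheses. Everything else is formal.
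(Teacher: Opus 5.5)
Your proposal is correct and follows essentially the same route as the paper's proof. Both localize at a maximal ideal and pass to a minimal complex, so that $\dim H^i(F^\bullet\otimes k)=\operatorname{rank}G^i$. Nakayama then gives $G^j=0$, and the tail beyond degree $j$ is killed using the vanishing of $H^{<0}(F^\bullet)$, or of $H^{<0}$ of the dual complex for the positive side.

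Your version is slightly cleaner in two respects. Your step ``split exact plus minimal implies zero'' replaces the paper's informal ``replace by zero up to quasi-isomorphism.'' Also, your range $j\ge 0$ explicitly covers the inclusions $S^{-1}\subseteq S^0\supseteq S^1$, which the paper's write-up (stated only for $i<0$ and $i>0$) leaves implicit.
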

\begin{proof}
    We first localize at a closed point and assume $R$ is a local ring. Since any bounded complex of $R$-modules with finitely generated cohomology over a local ring is homotopy equivalent to a minimal complex, without loss of generality we assume $F^\bullet$ is also a minimal complex. Here a minimal complex means a bounded complex of free modules with differentials having coefficients in the maximal ideal $m$.

    Suppose $i<0$. We want to show if $H^{i}(F^\bullet \otimes R/m)=0$, then $H^{i-1}(F^\bullet\otimes R/m)=0$. Since the coefficients of $F^\bullet$ are in the maximal ideal $m$, then $\dim_{R/m} H^{k}(F^\bullet \otimes R/m) = \operatorname{rank} F^k$ for any $k$. Hence $F^i=0$ by Nakayama's lemma. Since $H^i(F^\bullet)=0$ by assumption, we can further replace $F^{k}$ where $k<i$ by zero $R$-modules and still get a quasi-isomorphic complex. It then trivially follows that $H^{i-1}(F^\bullet\otimes R/m)=0$.

    Now let $i>0$, we want to show if $H^{i}(F^\bullet \otimes R/m)=0$, then $H^{i+1}(F^\bullet\otimes R/m)=0$. Because $H^i(F^\bullet \otimes R/m)=0$, we have $F^i=0$ by Nakayama's lemma. Same way as in the last paragraph, we can replace $\operatorname{Hom}_R(F^\bullet,R)$ by zero modules in degree less than $-i$, and still get a quasi-isomorphic complex. Apply $\operatorname{Hom}_R$ functor again, it trivially follows that $H^{i+1}(R^\bullet \otimes R/m)$ is zero.
    
\end{proof}

We also verify that our Fourier-Mukai transform $\fmd_A(\M)$ for a mixed Hodge module $\M$ satisfies the assumptions in this proposition. In the de Rham version of the Fourier transform, this is proved in \cite{mellinpropagation}. The first assumption on $\fmd_A(\M)$ is automatic since it's the pushforward of a single sheaf, so there's cohomologies in negative degrees. The second assumption is true by part $(b)$ of \theoremref{thm:functoriality}. 
\begin{corollary}\label{cor:propagationDolbeault}
    On an abelian variety $A$ of dimension $g$, the cohomology support loci of $\fmd_A(\M)$ for a mixed Hodge module $\M$ satisfy the propagation property,
    $$\emptyset=S^{g+1}\subseteq S^g\subseteq \cdots\subseteq S^1\subseteq S^0\supseteq S^{-1}\supseteq \cdots \supseteq S^{-g}\supseteq S^{-g-1}=\emptyset.$$
\end{corollary}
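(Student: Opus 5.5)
The plan is to realize $\fmd_A(\M)$, locally on $\hat{A}\times V$, as a bounded complex of finitely generated free modules over a regular local ring, and then check the two hypotheses of \propositionref{prop:propagation}. Since $\hat{A}\times V$ is smooth, every local ring $R=\strucsheaf_{\hat{A}\times V,x}$ is regular noetherian. The object $\fmd_A(\M)$ lies in $\mathrm{D}^b_{\operatorname{coh}}$, so its stalk at $x$ is quasi-isomorphic to a bounded complex $F^\bullet$ of finitely generated free $R$-modules. By base change, $H^k(F^\bullet\otimes R/m)$ computes $H^k(A,\operatorname{Dol}(\M,L,\theta))$ for the point $x=(L,\theta)$. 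The cohomology support loci of $F^\bullet$ in the sense of \propositionref{prop:propagation} are therefore exactly the germs at $x$ of the $S^k(\gr^F\M)$. Since the inclusions $S^{k+1}\subseteq S^k$ (for $k\geq 0$) and $S^{k-1}\subseteq S^k$ (for $k\leq 0$) are pointwise statements, proving them at every $x$ suffices.

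\textbf{First hypothesis.} We need $H^i(F^\bullet)=0$ for $i<0$. By definition, $\fmd_A(\M)=\mathbf{R}(p_{23})_*$ of a single coherent sheaf, namely $p_{12}^*P\otimes p_1^*\omega_A\otimes p_{13}^*(\id\times\iota_V)^*\gr^F\M$. This sheaf is flat pullback data, so no derived tensor issues arise. Higher direct images only live in degrees $\geq 0$, so $\fmd_A(\M)\in \mathrm{D}^{\geq 0}$ and its stalks have no negative cohomology.

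\textbf{Second hypothesis.} We need $H^i(\operatorname{Hom}_R(F^\bullet,R))=0$ for $i<0$. Here $\operatorname{Hom}_R(F^\bullet,R)$ is the stalk of $\mathbf{R}\mathcal{H}om(\fmd_A(\M),\strucsheaf)$. By \theoremref{thm:functoriality}(b),
$$\mathbf{R}\mathcal{H}om(\fmd_A(\M),\strucsheaf_{\hat{A}\times V})\simeq \langle -1\rangle^*\fmd_A(\mathbf{D}_A\M).$$
Since $\mathbf{D}_A\M$ is again a single mixed Hodge module, the first-hypothesis argument applied to it shows this object also lies in $\mathrm{D}^{\geq 0}$. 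Pulling back by the automorphism $\langle-1\rangle$ preserves this. Taking stalks at $x$ (which corresponds to $-x$ on the other side) gives the vanishing.

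\textbf{Conclusion.} \propositionref{prop:propagation} now yields the chain of inclusions locally. The boundary terms are empty: $\operatorname{Dol}(\M,L,\theta)$ is a complex in degrees $[-g,0]$ and $A$ has dimension $g$, so its hypercohomology vanishes outside $[-g,g]$, giving $S^{\pm(g+1)}=\emptyset$.

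\textbf{Main obstacle.} The only delicate point is the duality identification in the second hypothesis. It requires that $\gr^F\mathbf{D}_A\M$ be, up to the sign involution and the twist by $\omega$, the Grothendieck dual of $\gr^F\M$, so that the dual of the transform is again a pushforward of a single sheaf. This is exactly Saito's Cohen–Macaulay duality formula, already packaged in \theoremref{thm:functoriality}(b), so we simply invoke it.
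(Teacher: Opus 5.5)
Your proposal is correct and follows the paper's own argument: you verify both hypotheses of \propositionref{prop:propagation}, the first because $\fmd_A(\M)$ is the derived pushforward of a single coherent sheaf, and the second by applying the duality isomorphism of \theoremref{thm:functoriality}(b) to the mixed Hodge module $\mathbf{D}_A\M$. Your extra details make explicit what the paper leaves implicit: working at regular local rings of closed points, identifying derived fibers with Dolbeault cohomology via base change, and deducing $S^{\pm(g+1)}=\emptyset$ from the fact that the cohomology lives in degrees $[-g,g]$.
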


\newpage
\bibliographystyle{alpha}
\bibliography{references.bib}

\end{document}